\theoremstyle{plain}
\newtheorem{theorem}{Theorem}[section]
\newtheorem{lemma}[theorem]{Lemma}
\newtheorem{cor}[theorem]{Corollary}
\newtheorem{prop}[theorem]{Proposition}
\theoremstyle{definition}
\newtheorem{definition}[theorem]{Definition}
\newtheorem{example}[theorem]{Example}
\theoremstyle{remark}
\newtheorem{rem}[theorem]{Remark}
\newtheorem*{notation}{Notation}
\DeclareMathOperator{\bin}{Bin}
\DeclareMathOperator{\cl}{Cl}
\DeclareMathOperator{\im}{Im}
\DeclareMathOperator{\Li}{Li}
\DeclareMathOperator{\lk}{lk}
\DeclareMathOperator{\ph}{PH}
\DeclareMathOperator{\rank}{rank}
\DeclareMathOperator{\sgn}{sgn}
\DeclareMathOperator{\spa}{span}
\DeclareMathOperator{\tr}{tr}
\newcommand{\qad}{\phantom{={}}}
\newcommand{\relmiddle}[1]{\mathrel{}\middle#1\mathrel{}}
\renewcommand{\P}{\mathbb{P}}
\newcommand{\E}{\mathbb{E}}
\newcommand{\N}{\mathbb{N}}
\newcommand{\R}{\mathbb{R}}
\newcommand{\Z}{\mathbb{Z}}
\newcommand{\cC}{\mathcal{C}}
\newcommand{\cF}{\mathcal{F}}
\newcommand{\cK}{\mathcal{K}}
\newcommand{\cL}{\mathcal{L}}
\newcommand{\cX}{\mathcal{X}}
\newcommand{\p}{\mathbf{p}}
\renewcommand{\a}{\alpha}
\renewcommand{\b}{\beta}
\newcommand{\dl}{\delta}
\newcommand{\eps}{\varepsilon}
\newcommand{\gm}{\gamma}
\newcommand{\lm}{\lambda}
\newcommand{\Om}{\Omega}
\newcommand{\sg}{\sigma}
\newcommand{\Sg}{\Sigma}
\newcommand{\la}{\langle}
\newcommand{\ra}{\rangle}
\newcommand{\del}{\partial}
\numberwithin{equation}{section}
\DeclareRobustCommand{\qed}{%
  \ifmmode \mathqed
  \else
    \leavevmode\unskip\penalty9999 \hbox{}\nobreak\hfill
    \quad\hbox{\qedsymbol}%
  \fi
}
\newcommand{\myqedhere}{\qedhere}
\begin{document}

\title[Lifetime sums for random simplicial complex processes]{Asymptotic behavior of lifetime sums for random simplicial complex processes}

\author{Masanori \textsc{Hino}}
\address{Department of Mathematics\\ Kyoto University\\ Kyoto 606--8502, Japan}
\email{hino@math.kyoto-u.ac.jp}

\author{Shu \textsc{Kanazawa}}
\address{Mathematics Department\\ Tohoku University\\ Sendai 980--8578, Japan}
\email{kanazawa.shu.p5@dc.tohoku.ac.jp}

\subjclass[2010]{Primary 05C80, 60D05; Secondary 55U10, 05E45, 60C05}

\keywords{Linial--Meshulam complex process, random clique complex process, multi-parameter random simplicial complex, lifetime sum, Betti number}


\begin{abstract}
We study the homological properties of random simplicial complexes. In particular, we obtain the asymptotic behavior of lifetime sums for a class of increasing random simplicial complexes; this result is a higher-dimensional counterpart of Frieze's $\zeta(3)$-limit theorem for the Erd\H{o}s--R\'{e}nyi graph process. The main results include solutions to questions posed in an earlier study by Hiraoka and Shirai about the Linial--Meshulam complex process and the random clique complex process. One of the key elements of the arguments is a new upper bound on the Betti numbers of general simplicial complexes in terms of the number of small eigenvalues of Laplacians on links. This bound can be regarded as a quantitative version of the cohomology vanishing theorem.
\end{abstract}

\maketitle

\section{Introduction}
The Erd\H{o}s--R\'{e}nyi $G(n,p)$ model has been extensively studied since the 1960s~(\cite{Gi,ER1,ER2}). This model, defined as the distribution of random graphs with $n$ vertices where the edge between each pair of vertices is included with probability $p$ independently of any other edge, is one of the most typical models of random graphs.
One of the main themes in $G(n,p)$ theory is searching for threshold probabilities. For example, Erd\H{o}s and R\'{e}nyi~\cite{ER2} showed that the threshold for graph connectivity of $G(n,p)$ is $p = (\log n)/n$. When we vary $p$ and consider a family of the Erd\H{o}s--R\'{e}nyi graphs with parameter $p$, the following construction is often useful. Let $K_n = V_n \sqcup E_n$ be the complete graph with $n$ given vertices, where $V_n$ and $E_n$ denote the sets of vertices and edges, respectively. We assign an independent random variable $u_e$ to each edge $e\in E_n$ and let $u_e$ be uniformly random on $[0,1]$. For each $p\in[0,1]$, a random subgraph $K_n(p)$ of $K_n$ is then defined by
\[
K_n(p) := V_n \sqcup \{e\in E_n\mid u_e\le p\}. 
\]
This construction, the so-called Erd\H{o}s--R\'{e}nyi graph process over $n$ vertices, yields an increasing family $\cK_n := \{K_n(t)\}_{t\in [0, 1]}$ of random graphs. 
This process is closely related to the concept of the minimum weight on $K_n$, which can be seen as follows. For each spanning tree $T$ in $K_n$, define its weight as $\sum_{e\in T} u_e$. 
Let $W_n$ be the minimum weight among all the spanning trees in $K_n$. Then
\begin{equation}\label{eq: key of Frieze's zeta 3}
W_n = \sum_{i=1}^{n-1}t_i = \int_0^1\b_0(K_n(t))\,dt, 
\end{equation}
where $t_i\in [0, 1]$ is the $i$-th random time at which the number of connected components of $K_n(t)$ decreases, and $\b_0(K_n(t))$ denotes the zeroth (reduced) Betti number of $K_n(t)$, that is, the number of connected components of $K_n(t)$ minus one. 
This type of relation holds for a general increasing family of graphs.
Applying this formula and analyzing $\b_0(K_n(t))$ in detail, Frieze~\cite{F} obtained the following significant result about the behavior of $W_n$. 
\begin{theorem}[$\zeta(3)$-limit theorem {\cite{F}}]\label{thm:FriezeZeta}
It holds that
\[
\lim_{n\to\infty}\E[W_n]=\zeta(3)\biggl(=\sum_{k=1}^{\infty}k^{-3}=1.202\cdots\biggr)
\]
and for any $\eps>0$, 
\[
\lim_{n\to\infty}\P(|W_n-\zeta(3)|>\eps)=0. 
\]
\end{theorem}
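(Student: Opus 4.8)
The plan is to combine the identity \eqref{eq: key of Frieze's zeta 3}, in expectation, with a quantitative description of the component structure of $K_n(t)$, which has the law of the Erd\H{o}s--R\'enyi graph $G(n,t)$. Write $c(G)$ for the number of connected components of $G$ and, for a vertex $v$, write $C(v)$ for its component.

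\textbf{Step 1 (reduction to an integral).} Taking expectations in \eqref{eq: key of Frieze's zeta 3} gives $\E[W_n]=\int_0^1\bigl(\E[c(K_n(t))]-1\bigr)\,dt$. Since $c(G)=\sum_{v}|C(v)|^{-1}$, vertex-exchangeability yields $\E[c(K_n(t))]=n\,\E\bigl[|C_{n,t}(1)|^{-1}\bigr]$, where $C_{n,t}(1)$ denotes the component of the vertex $1$ in $K_n(t)$. After the substitution $t=x/n$,
\[
\E[W_n]=\int_0^n\Bigl(\E\bigl[|C_{n,x/n}(1)|^{-1}\bigr]-\tfrac1n\Bigr)\,dx .
\]

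\textbf{Step 2 (the limiting integrand).} For fixed $x>0$, the local weak convergence of $G(n,x/n)$ to the Poisson$(x)$ Galton--Watson tree shows that $|C_{n,x/n}(1)|$ converges in distribution on $\{1,2,\dots,\infty\}$ to the total progeny $Y_x$ of that tree, with $Y_x=\infty$ precisely on the survival event when $x>1$; equivalently $\P(|C_{n,x/n}(1)|=k)\to \P(Y_x=k)=k^{k-1}x^{k-1}e^{-kx}/k!$ for each finite $k$ (this last limit also follows directly from $\P(|C_{n,x/n}(1)|=k)=\binom{n-1}{k-1}P_k(x/n)(1-x/n)^{k(n-k)}$, where $P_s(\cdot)$ is the probability that $G(s,\cdot)$ is connected and $P_k(p)=k^{k-2}p^{k-1}(1+O(p))$ by Cayley's formula). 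Since $u\mapsto 1/u$ is bounded and continuous with $1/\infty:=0$,
\[
\E\bigl[|C_{n,x/n}(1)|^{-1}\bigr]\ \longrightarrow\ g(x):=\E[Y_x^{-1}]=\sum_{k=1}^{\infty}\frac{k^{k-2}x^{k-1}e^{-kx}}{k!},
\]
and termwise integration gives $\int_0^\infty g(x)\,dx=\sum_{k\ge1}\frac{k^{k-2}}{k!}\cdot\frac{(k-1)!}{k^{k}}=\sum_{k\ge1}k^{-3}=\zeta(3)$. It remains to pass this limit through the expanding improper integral.

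\textbf{Step 3 (passing to the limit; the main obstacle).} Split the $t$-integral at the connectivity threshold $(\log n)/n$ and write $\E[W_n]=I_n+II_n$ accordingly. For $II_n=\int_{(\log n)/n}^1\E[\b_0(K_n(t))]\,dt$ I would use $\b_0(K_n(t))\le \#\{\text{isolated vertices}\}+\#\{\text{components of size in }[2,n-1]\}$, so that $\E[\b_0(K_n(t))]\le n(1-t)^{n-1}+\sum_{s=2}^{n-1}\binom ns P_s(t)(1-t)^{s(n-s)}$; integrating over $t\in[(\log n)/n,1]$ and estimating each term with Beta-type (incomplete Gamma) bounds, one checks $II_n\to0$, the contribution being dominated by the isolated vertices and the size-$O(1)$ components, with the factor $(1-t)^{s(n-s)}$ evaluated at $t\gtrsim(\log n)/n$ supplying enough decay to beat $\binom ns$. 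This is the most computational part---a routine first-moment estimate for $G(n,p)$ above the connectivity threshold---but obtaining it with enough uniformity to integrate in $t$ is the principal obstacle in the write-up. For $I_n$, the substitution $t=x/n$ gives $I_n=\int_0^{\log n}\bigl(\tfrac1n\E[c(K_n(x/n))]-\tfrac1n\bigr)\,dx$; the term $-\tfrac1n$ contributes $-(\log n)/n\to0$, while $\tfrac1n\E[c(K_n(x/n))]\to g(x)$ by Step 2 and is bounded by $1$ on compact $x$-sets and by $Ce^{-x/2}$ for $x\ge x_0$ (the complement of the giant component being exponentially small in the supercritical regime), so a dominated-convergence argument with $x_0\to\infty$ gives $I_n\to\int_0^\infty g(x)\,dx=\zeta(3)$. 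Hence $\E[W_n]\to\zeta(3)$.

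\textbf{Step 4 (concentration).} By Chebyshev's inequality it suffices to show $\mathrm{Var}(W_n)\to0$, for which I would apply the Efron--Stein inequality to the i.i.d.\ weights $(u_e)_{e\in E_n}$: resampling a single $u_e$ leaves $W_n$ unchanged unless $e$ belongs to the minimum spanning tree before or after the resampling, and in that case $W_n$ changes by at most the heaviest minimum-spanning-tree edge weight, which is at most $C(\log n)/n$ except on an event of probability $O(n^{-C+1})$ (as $G(n,C(\log n)/n)$ is connected with high probability). Combining this with $\P(e\in\mathrm{MST})=(n-1)/\binom n2=2/n$ yields $\E[(W_n-W_n')^2]=O((\log n)^2/n^3)$ up to an additive $O(n^{-C+1})$ term, where $W_n'$ is $W_n$ after resampling $u_e$; summing over the $\binom n2$ edges and choosing $C$ large gives $\mathrm{Var}(W_n)=O((\log n)^2/n)\to0$, completing the proof.
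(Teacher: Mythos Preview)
The paper does not give its own proof of Theorem~\ref{thm:FriezeZeta}; it is quoted as Frieze's result and used only as motivation. What the paper \emph{does} prove is the generalization Theorem~\ref{thm:LMlimiting2}, whose case $d=1$, $\a=1$ recovers Frieze's theorem (since $\cK_n^{(1)}$ is the Erd\H{o}s--R\'enyi graph process, $L_0(\cK_n^{(1)})=W_n$ by \eqref{eq: key of Frieze's zeta 3}, and $I_0=\zeta(3)$ by the computation in Section~\ref{sec:I}). So the relevant comparison is between your argument and the proof of Theorem~\ref{thm:LMlimiting2}.

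Your route and the paper's are genuinely different. You work directly with the component containing a fixed vertex, pass to the Poisson--Galton--Watson limit, and evaluate $\int_0^\infty g(x)\,dx=\zeta(3)$ termwise via the Borel distribution; your dominating function on $[x_0,\log n]$ comes from supercritical giant-component estimates, and concentration is handled separately by Efron--Stein. The paper instead writes everything through Betti numbers, uses the Linial--Peled type limit \eqref{eq:d-1_Betti_in_pr} for the pointwise convergence of $\b_{d-1}(K_n^{(d)}(s/n))/n^d$, and---crucially---obtains the integrable envelope $1\wedge C s^{-l}$ from the spectral Betti-number bound of Theorem~\ref{thm:CFdecayBetti} rather than from combinatorial component counts. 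Concentration is not treated separately: the Minkowski/dominated-convergence argument gives $L^r$-convergence for every $r\ge1$ in one stroke, which subsumes the in-probability statement. Your approach is more elementary and closer to Frieze's original; the paper's approach is less hands-on for $d=1$ but is what makes the higher-$d$ case go through uniformly. The main soft spot in your sketch is exactly the one you flag: the uniform-in-$n$ tail bound $\E[|C_{n,x/n}(1)|^{-1}]\le Ce^{-x/2}$ (or the corresponding control of $II_n$) requires a quantitative supercritical estimate valid for all $n$, not just the limiting GW survival probability; this is standard but does need to be stated and proved carefully for the dominated-convergence step to be airtight.
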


Recently, there has been a growing interest in studying random simplicial complexes as a higher-dimensional generalization of random graphs. Since an Erd\H{o}s--R\'{e}nyi graph can be regarded as a one-dimensional random simplicial complex, and graph connectivity can be equivalently described as the vanishing of the zeroth (reduced) homology, it is natural to seek a higher-dimensional analogue to the theory of Erd\H{o}s--R\'{e}nyi's $G(n,p)$ model. 
The $d$-Linial--Meshulam model~\cite{LM1} and the random clique complex model~\cite{K2} are typical models of this type. 
The $d$-Linial--Meshulam model $Y_d(n,p)$ is defined as the distribution of $d$-dimensional random simplicial complexes with $n$ vertices and the complete $(d-1)$-dimensional skeleton such that each $d$-simplex is placed with independent probability $p$. 
The random clique complex model $C(n,p)$ is defined as the distribution of the clique complex of the Erd\H{o}s--R\'{e}nyi graph that follows $G(n,p)$. Here, given a graph $G$, its clique complex $\cl(G)$ is defined as the maximal simplicial complex among those for which the one-dimensional skeletons are equal to $G$. 
Linial, Meshulam, and Wallach~\cite{LM1,LM2} exhibited the threshold for the vanishing of the $(d-1)$-th homology for the $d$-Linial--Meshulam model, which is analogous to the connectivity threshold of the Erd\H{o}s--R\'{e}nyi graph. Later, Kahle~\cite{K1} obtained similar results for the random clique complex model.

Along another line, Hiraoka and Shirai~\cite{HS} obtained a higher-dimensional analogue of \eqref{eq: key of Frieze's zeta 3} in the context of the 
theory of persistent homology. 
Persistent homologies can describe the topological features of a filtration (i.e., an increasing family of simplicial complexes; see, e.g.,~\cite{ELZ,ZC}). In particular, these provide rigorous definitions for the concepts of birth and death times of higher-dimensional holes, sometimes called cycles and cavities. The lifetimes, which are defined as the difference between the birth time and death time, measure the persistence of each hole in the filtration. 
In \cite{HS}, Hiraoka and Shirai proved that the lifetime sum $L_k(\cX)$ of the $k$-th persistent homology associated with a filtration 
$\cX=\{X(t)\}_{0\le t\le1}$ is equal to $\int_0^1 \b_k(X(t))\,dt$, where $\b_k(X(t))$ represents the $k$-th (reduced) Betti number of the simplicial complex $X(t)$ (see Theorem~\ref{thm:lifetimeformula}). When $k=0$ and $\cX$ is the Erd\H{o}s--R\'{e}nyi graph process over $n$ vertices, the result is consistent with the second identity of \eqref{eq: key of Frieze's zeta 3}. 
They also obtained a relation analogous to the first identity (see Theorem~1.1 in~\cite{HS}).
Thus, it is natural to seek the asymptotic behavior of the lifetime sum $L_k(\cX_n)$ for a random filtration $\cX_n=\{X_n(t)\}_{0\le t\le1}$ or $\cX_n=\{X_n(t)\}_{t\ge0}$ over $n$ vertices as a higher-dimensional generalization of Theorem~\ref{thm:FriezeZeta}. 
Typical random filtrations include the $d$-Linial--Meshulam complex process $\cK_n^{(d)}=\{K_n^{(d)}(t)\}_{0\le t\le1}$, where $K_n^{(d)}(t)$ follows $Y_d(n,t)$, and the random clique complex process $\cC_n = \left\{C_n(t)\right\}_{0\le t\le1}$, where $C_n(t)$ follows $C(n,t)$ (see Section~4.2 for further details).
For these models, the following estimates were proved by Hiraoka and Shirai.
\begin{theorem}[{\cite[Theorem~1.2]{HS}}]\label{thm:HS1}
Let $d\ge 1$. 
There exist positive constants $c$ and $C$ such that for sufficiently large $n$, 
\[
cn^{d-1}\le \E\bigl[L_{d-1}(\cK_n^{(d)})\bigr]\le C n^{d-1}. 
\]
\end{theorem}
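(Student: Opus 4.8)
The approach is to invoke the lifetime formula of Theorem~\ref{thm:lifetimeformula} to rewrite the quantity of interest as an integral of expected Betti numbers, prove the lower bound by elementary linear algebra, and reduce the upper bound to a quantitative cohomology vanishing estimate for the Linial--Meshulam distribution. Concretely, Theorem~\ref{thm:lifetimeformula} and Fubini's theorem give
\[
\E\bigl[L_{d-1}(\cK_n^{(d)})\bigr]=\int_0^1\E\bigl[\b_{d-1}(K_n^{(d)}(t))\bigr]\,dt,
\]
where $K_n^{(d)}(t)$ is distributed as $Y_d(n,t)$. Every such complex has the complete $(d-1)$-skeleton, so the space of $(d-1)$-cycles has dimension $\binom{n-1}{d}$ regardless of the $d$-faces, and hence $\b_{d-1}(K_n^{(d)}(t))=\binom{n-1}{d}-\rank\del_d$, where $\del_d$ is the $d$-th boundary operator. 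Since $\rank\del_d$ is at most both the number $f_d$ of $d$-faces and $\binom{n-1}{d}$, we obtain the pointwise estimates $\max\{\binom{n-1}{d}-f_d,\,0\}\le\b_{d-1}(K_n^{(d)}(t))\le\binom{n-1}{d}$.

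For the lower bound, set $t_\ast=(d+1)/(2n)$. For $t\le t_\ast$ we have $\E[f_d]=t\binom{n}{d+1}\le t_\ast\binom{n}{d+1}=\frac12\binom{n-1}{d}$, so $\E[\b_{d-1}(K_n^{(d)}(t))]\ge\frac12\binom{n-1}{d}$; integrating over $[0,t_\ast]$ yields $\E[L_{d-1}(\cK_n^{(d)})]\ge\frac{d+1}{4n}\binom{n-1}{d}\ge cn^{d-1}$ for all large $n$ and a suitable $c=c(d)>0$. For the upper bound, fix a large constant $A>0$. On $[0,A/n]$ the trivial bound $\b_{d-1}\le\binom{n-1}{d}$ contributes at most $\frac{A}{n}\binom{n-1}{d}=O(n^{d-1})$ to the integral. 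On $[A/n,1]$ it suffices to prove a quantitative vanishing estimate of the form $\E[\b_{d-1}(Y_d(n,t))]\le C_1 n^{d}g(tn)$ valid for $tn\ge A$, where $g$ is a fixed nonincreasing function on $[A,\infty)$ with $\int_A^\infty g(a)\,da<\infty$ (any $g(a)=e^{-\theta a}$ with $\theta>0$ small enough will do): then the substitution $a=tn$ gives $\int_{A/n}^1\E[\b_{d-1}(K_n^{(d)}(t))]\,dt\le C_1 n^{d-1}\int_A^\infty g(a)\,da=O(n^{d-1})$, and the theorem follows.

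\textbf{The main obstacle} is exactly this vanishing estimate, and the delicate regime is the window $1/n\lesssim t\lesssim(\log n)/n$, where $\E[\b_{d-1}(Y_d(n,t))]$ is only polynomially smaller than its maximum $\binom{n-1}{d}$, so that merely splitting $[0,1]$ at the vanishing threshold $\sim(d\log n)/n$ and using the trivial bound would cost a spurious factor $\log n$. For $t\gtrsim(\log n)/n$ one can proceed by a first-moment bound: over $\mathbb{F}_2$, $\b_{d-1}(Y_d(n,t))$ equals $\log_2$ of the number of $d$-coboundaries on the complete $d$-skeleton whose support avoids the sampled $d$-faces, and since every nonzero such coboundary has support of size at least $n-d$ (with equality for the $\binom{n}{d}$ coboundaries of indicators of single $(d-1)$-faces), Jensen's inequality bounds the expectation by $\log_2(1+\binom{n}{d}(1-t)^{n-d}+\cdots)=O(n^{d}e^{-tn})$ in that range. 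For $A\le tn\lesssim\log n$, however, the obstructing cocycles are no longer localized at a single low-degree face and this crude argument breaks down; one then needs either a detailed structural analysis of the sparse Linial--Meshulam complex in the spirit of Linial--Meshulam--Wallach (equivalently, a sharp estimate of $\E[\b_d(Y_d(n,t))]$ exploiting the cancellation in the Euler-type identity $\b_{d-1}-\b_d=\binom{n-1}{d}-f_d$), or the quantitative cohomology vanishing bound developed elsewhere in this paper, which estimates $\b_{d-1}$ through the number of small eigenvalues of the Laplacians on the links of $(d-1)$-faces. I expect this to be the one genuinely hard step; notably the sharp exponential rate is not required, only a nonincreasing integrable majorant $g$, so it is enough to bound $n^{-d}\,\E[\b_{d-1}(Y_d(n,a/n))]$, uniformly in $n$, by a fixed integrable function of $a\in[A,\infty)$.
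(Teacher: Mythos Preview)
Your plan is exactly the paper's: rewrite $\E[L_{d-1}]$ via the lifetime formula as $\int_0^1\E[\b_{d-1}(K_n^{(d)}(t))]\,dt$, obtain the lower bound from the Morse-type inequality (your argument is Proposition~\ref{prop:byMorse} specialized to the Linial--Meshulam model), and obtain the upper bound from a uniform estimate $\E[\b_{d-1}(Y_d(n,t))]\le n^d g(nt)$ with integrable $g$, then change variables $a=nt$. The paper executes the last step via Theorem~\ref{thm:CFdecayBetti} (see Example~\ref{ex:LM}(1)), which yields the polynomial majorant $g(a)=1\wedge C a^{-l}$ for every fixed $l\in\N$; this is precisely the ``quantitative cohomology vanishing bound'' you invoke, and it is indeed the one genuinely nontrivial ingredient. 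Two small corrections: in Theorem~\ref{thm:geneCVT} the relevant links are those of $(d-2)$-simplices, not $(d-1)$-faces; and the paper does not prove an exponential majorant $e^{-\theta a}$ uniformly in $n$ (although the limiting function $h_d$ does decay exponentially), only the polynomial one --- which, as you note, is all that is needed.
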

\begin{theorem}	[{\cite[Theorem~6.10]{HS}}]\label{thm:HS2}
Let $k\ge 0$.
There exist positive constants $c$ and $C$ such that for sufficiently large $n$, 
\begin{equation}\label{eq: Hiraoka--Shirai 2}
c n^{k/2+1-1/(k+1)} \le \E[L_k(\cC_n)] \le \begin{cases}
C n^k\log n 	&(k=0, 1), \\
C n^k		&(k\ge 2). 
\end{cases}
\end{equation}
\end{theorem}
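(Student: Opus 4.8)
The starting point is the lifetime formula of Hiraoka and Shirai (Theorem~\ref{thm:lifetimeformula}): $\E[L_k(\cC_n)]=\int_0^1\E[\b_k(C_n(t))]\,dt$, so the whole problem reduces to estimating $\E[\b_k(C_n(t))]$ as a function of $t$. The case $k=0$ is immediate, since $\b_0(C_n(t))$ depends only on the $1$-skeleton of $C_n(t)$, which is the Erd\H{o}s--R\'enyi graph $K_n(t)$; hence $L_0(\cC_n)=W_n$ and $\E[L_0(\cC_n)]\to\zeta(3)$ by Theorem~\ref{thm:FriezeZeta}, a quantity squeezed between $c$ and $C\log n$. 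So assume $k\ge1$. Write $f_j(X)$ for the number of $j$-dimensional faces of a complex $X$; since a $j$-face of a clique complex is a $(j+1)$-clique of its $1$-skeleton, $\E[f_j(C_n(t))]=\binom{n}{j+1}t^{\binom{j+1}{2}}$. Fix a large constant $C_0=C_0(k)$, set $t^\ast:=C_0(n^{-1}\log n)^{1/(k+1)}$, and split the integral at $t^\ast$.

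On $[0,t^\ast]$ I would use only the trivial pathwise bound $\b_k(X)\le f_k(X)$, which gives $\int_0^{t^\ast}\E[\b_k(C_n(t))]\,dt\le\binom{n}{k+1}(t^\ast)^{\binom{k+1}{2}+1}/\bigl(\binom{k+1}{2}+1\bigr)\asymp n^{k/2+1-1/(k+1)}(\log n)^{k/2+1/(k+1)}$, using $\bigl(\binom{k+1}{2}+1\bigr)/(k+1)=k/2+1/(k+1)$. A direct check shows $k/2+1-1/(k+1)$ equals $k$ exactly when $k\in\{0,1\}$ and is strictly less than $k$ for $k\ge2$, so this contribution is $\le Cn^k\log n$ when $k=1$ and is $o(n^k)$ when $k\ge2$; this is precisely the origin of the logarithm in the statement.

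On $[t^\ast,1]$ I would use the cohomology vanishing theorem obtained via Garland's method (in the form used by Meshulam--Wallach and by Kahle for flag complexes): there is $\gm_k<1$ such that if, for every face $\sg$ of $C_n(t)$ of dimension at most $k-1$, the normalized Laplacian of the $1$-skeleton of $\lk_{C_n(t)}(\sg)$ has second-smallest eigenvalue exceeding $\gm_k$, then $\b_k(C_n(t))=0$. Hence $\{\b_k(C_n(t))\ne0\}$ forces some such $\sg$ to have a ``bad'' link, and a union bound over the $O(n^k)$ candidate faces $S$ reduces the task to bounding, for a fixed $|S|\le k$, the probability that $\lk_{C_n(t)}(S)$ is bad given that $S$ is a face. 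Conditioning on $S$ being a face, that link is a random clique complex $C(M,t)$ with $M\sim\bin(n-|S|,t^{|S|})$, independent of the internal edges of $S$. For $t\ge t^\ast$ one has $\E[M]\,t=(n-|S|)t^{|S|+1}\ge(n-k)t^{k+1}\gtrsim n(t^\ast)^{k+1}=C_0^{k+1}\log n$, so a Chernoff bound for $M\ge\E[M]/2$ (the complement being super-polynomially unlikely) together with the standard large-deviation estimate for the spectral gap of $G(M,t)$ shows the bad-link probability is at most $n^{-B}$ for some $B=B(C_0,k)\to\infty$ as $C_0\to\infty$. Choosing $C_0$ large enough we get $\P(\b_k(C_n(t))\ne0)\le n^{-(2k+4)}$, and combined with the crude deterministic bound $\b_k(C_n(t))\le f_k(C_n(t))\le\binom{n}{k+1}$ this gives $\E[\b_k(C_n(t))]=o(n^{-2})$ uniformly on $[t^\ast,1]$, so this part of the integral is $o(1)$. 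Together with the previous paragraph this yields the asserted upper bounds.

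For the lower bound I would use the elementary pathwise inequality $\b_k(X)\ge f_k(X)-f_{k-1}(X)-f_{k+1}(X)$ (from $\b_k=\dim\ker\del_k-\rank\del_{k+1}\ge(f_k-\rank\del_k)-\rank\del_{k+1}$ with $\rank\del_k\le f_{k-1}$ and $\rank\del_{k+1}\le f_{k+1}$), take expectations, and integrate over the window $I_n:=[a\,n^{-1/k},\,b\,n^{-1/(k+1)}]$. From the explicit means, on $I_n$ one has $\E[f_{k-1}(C_n(t))]/\E[f_k(C_n(t))]\lesssim a^{-k}$ and $\E[f_{k+1}(C_n(t))]/\E[f_k(C_n(t))]\lesssim b^{k+1}$, so fixing $a$ large and $b$ small (with $I_n$ nonempty for large $n$, as $a/b\ll n^{1/(k(k+1))}$) gives $\E[\b_k(C_n(t))]\ge\tfrac12\E[f_k(C_n(t))]$ throughout $I_n$. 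Restricting to the right half-window $[(b/2)n^{-1/(k+1)},\,b\,n^{-1/(k+1)}]\subseteq I_n$, on which $\E[f_k(C_n(t))]\gtrsim n^{k+1}n^{-\binom{k+1}{2}/(k+1)}=n^{k/2+1}$ and whose length is $\asymp n^{-1/(k+1)}$, yields $\E[L_k(\cC_n)]\gtrsim n^{k/2+1-1/(k+1)}$, completing the proof. The main obstacle is the estimate on $[t^\ast,1]$: to survive multiplication by the crude Betti bound $\binom{n}{k+1}$, the cohomology-vanishing failure probability must be driven below a power like $n^{-(2k+4)}$, and this has to hold all the way down to $t=t^\ast$, where the links in question are clique complexes on only $\asymp n^{1/(k+1)}(\log n)^{k/(k+1)}$ vertices with mean degree merely a large constant times $\log n$ --- essentially at the spectral-gap threshold for those links --- so one needs a sharp, large-deviation-quality version of random-graph Laplacian concentration, uniform in $t$ and over the conditioning. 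Everything else (the lifetime formula, the first-moment upper bound, and the Euler-characteristic-type lower bound) is routine.
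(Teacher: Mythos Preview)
Your argument is correct and is essentially a reconstruction of the original Hiraoka--Shirai proof of this result (Theorem~\ref{thm:HS2} is quoted from \cite{HS}; the present paper does not re-prove it directly). Your lower bound via the Morse-type inequality $\b_k\ge f_k-f_{k-1}-f_{k+1}$ is exactly Proposition~\ref{prop:byMorse} specialized to the clique complex, and your upper bound is the classical two-regime split: the trivial bound $\b_k\le f_k$ on $[0,t^\ast]$ and the qualitative cohomology vanishing theorem (Corollary~\ref{cor:nonpureCVT}/Proposition~\ref{prop:CFvanish}) on $[t^\ast,1]$, with $t^\ast\asymp(n^{-1}\log n)^{1/(k+1)}$. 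Your discussion of the ``main obstacle'' is apt and matches what Proposition~\ref{prop:CFvanish} does formally.

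The paper takes a different route that yields strictly more: rather than using cohomology vanishing only as a $0$/$1$ statement, it proves a \emph{quantitative} version (Theorem~\ref{thm:geneCVT}) bounding $\b_k(X)$ by the number of small Laplacian eigenvalues on links, and turns this into the pointwise estimate $\E[\b_k(C_n(t))]\le n^{k+1}t^{\binom{k+1}{2}}\{1\wedge C(nt^{k+1})^{-l}\}$ for any $l$ (Theorem~\ref{thm:CFdecayBetti}). Integrating this over $[0,1]$ gives the sharp upper bound $O(n^{k/2+1-1/(k+1)})$ of Theorem~\ref{thm: order of 1-flag}, with no logarithmic loss and the exact exponent. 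What your approach buys is simplicity and no new spectral input beyond Theorem~\ref{thm:ERspegap}; what the paper's approach buys is the optimal exponent, precisely because it avoids the lossy dichotomy ``trivial below $t^\ast$ / vanishing above $t^\ast$'' that forces you to integrate the crude bound $f_k$ all the way up to the vanishing threshold.
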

In \cite{HS}, the asymptotic behavior of $\E[L_{d-1}(\cK_n^{(d)})]/n^{d-1}$ as $n\to\infty$ is also discussed, and a possible limiting constant $I_{d-1}$ is found by a heuristic argument.
For the exact value of $I_{d-1}$, see \eqref{eq:hdc} and \eqref{eq:I} and Section~\ref{sec:I}.
The exact growth exponent of $ \E[L_k(\cC_n)]$, mentioned as a problem in that paper, is considered here. 

In this paper, we obtain sharp quantitative estimates of lifetime sums $L_k(\cX_n)$ for a large class of random filtrations over $n$ vertices. 
The main results include a rigorous proof of the convergence of $L_k(\cK_n^{(d)})/n^{d-1}$ to $I_{d-1}$ and a determination of the growth exponent of $ \E[L_k(\cC_n)]$. These results solve problems posed in~\cite{HS} and are summarized in the following theorems.
\begin{theorem}	\label{thm:LMlimiting}
Let $d\ge1$. For any $r \in[1,\infty)$,
\[
\lim_{n\to\infty}\E\left[\left|\frac{L_{d-1}(\cK_n^{(d)})}{n^{d-1}} - I_{d-1}\right|^r\right]=0.
\]
In particular, $\E[L_{d-1}(\cK_n^{(d)})]/n^{d-1}$ converges to $I_{d-1}$ as $n\to\infty$.
\end{theorem}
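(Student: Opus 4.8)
The plan is to start from the lifetime formula (Theorem~\ref{thm:lifetimeformula}), which gives $L_{d-1}(\cK_n^{(d)})=\int_0^1\b_{d-1}(K_n^{(d)}(t))\,dt$, and to rescale time by setting $t=c/n$, so that
\[
\frac{L_{d-1}(\cK_n^{(d)})}{n^{d-1}}=\int_0^n Z_n(c)\,dc,\qquad Z_n(c):=\frac{\b_{d-1}(K_n^{(d)}(c/n))}{n^d}.
\]
The argument then splits into three parts: (i) for each fixed $c>0$, $Z_n(c)\to h_d(c)$ in $L^r$ for every $r\in[1,\infty)$, where $h_d$ is the function appearing in \eqref{eq:hdc}; (ii) a uniform integrable domination $\sup_n\E[Z_n(c)]\le\psi(c)$ for all $c>0$, with $\psi\in L^1(0,\infty)$; and (iii) a concentration estimate for $L_{d-1}(\cK_n^{(d)})/n^{d-1}$ around its mean. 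Granting these, (i) and (ii) yield, by dominated convergence, $\E[L_{d-1}(\cK_n^{(d)})]/n^{d-1}=\int_0^n\E[Z_n(c)]\,dc\to\int_0^\infty h_d(c)\,dc=I_{d-1}$, and then (iii) together with this convergence of the means upgrades it to $\E\bigl[\,\bigl|L_{d-1}(\cK_n^{(d)})/n^{d-1}-I_{d-1}\bigr|^r\,\bigr]\to0$.

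For part (i), I would use the exact identity $\b_{d-1}(X)=\binom{n-1}{d}-f_d(X)+\b_d(X)$, valid for every $d$-dimensional complex $X$ on $n$ vertices with complete $(d-1)$-skeleton, where $f_d(X)$ denotes the number of $d$-faces of $X$. Since $\binom{n-1}{d}/n^d\to1/d!$ and $f_d(K_n^{(d)}(c/n))/n^d\to c/(d+1)!$ in probability by the law of large numbers, the convergence of $Z_n(c)$ reduces to the scaling limit of the top Betti number $\b_d(K_n^{(d)}(c/n))/n^d$, which I would invoke from the known results on the $d$-Linial--Meshulam complex (or re-derive its upper half via the spectral estimate of part (ii)); combined with the two elementary limits above, this identifies the limit in probability of $Z_n(c)$ with $h_d(c)$. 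Because $0\le Z_n(c)\le\binom{n-1}{d}/n^d$ is uniformly bounded, convergence in probability then upgrades automatically to convergence in $L^r$ for every $r$.

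Part (ii) is where the new Betti-number bound of the paper enters, and I expect it to be the crux. Applied to $K_n^{(d)}(c/n)$, that bound controls $\b_{d-1}(K_n^{(d)}(c/n))$ by a sum, over the $\binom{n}{d-1}$ faces $\tau$ of dimension $d-2$, of the number of Laplacian eigenvalues of the link $\lk(\tau)$ lying below a fixed threshold; each such link is distributed as an Erd\H{o}s--R\'{e}nyi graph on $n-d+1$ vertices with parameter $c/n$, so on taking expectations $\E[Z_n(c)]\le\psi(c)$ for $n$ large, where $\psi(c)$ is a fixed multiple of the limiting expected number of such small eigenvalues of $G(n,c/n)$ per vertex. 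The crucial point is that $\psi$ is integrable on $(0,\infty)$: as $c\to\infty$, the sparse graph $G(n,c/n)$ has, away from its finite components, an expander-like Laplacian spectrum, so the small-eigenvalue count is dominated by the number of connected components, which is of order $ne^{-c}$ --- this is the quantitative content behind the Linial--Meshulam--Wallach vanishing theorem. Combined with the integrability of $h_d$ near infinity (already implicit in the heuristic of \cite{HS}), this closes the argument for the means.

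For part (iii), I would apply a concentration inequality to $L_{d-1}(\cK_n^{(d)})$ regarded as a function of the i.i.d.\ weights $(u_\sigma)$ attached to the $\binom{n}{d+1}$ potential $d$-faces. Changing one weight $u_\sigma$ alters $L_{d-1}(\cK_n^{(d)})$ by at most the Lebesgue measure of the set of times at which the corresponding Betti number is affected, hence by at most $|u_\sigma-u'_\sigma|$ and, with high probability, by at most $O((\log n)/n)$ since the $(d-1)$-th homology has vanished by time $O((\log n)/n)$; the method of typical bounded differences then yields concentration of $L_{d-1}(\cK_n^{(d)})/n^{d-1}$ at scale $o(1)$ when $d\ge2$, which with part (ii) finishes the proof, while for $d=1$ the statement reduces to Frieze's theorem (Theorem~\ref{thm:FriezeZeta}) and the required moment control is classical. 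The principal obstacle is part (ii): making the spectral Betti bound quantitative enough to produce an $n$-uniform, integrable majorant of $\E[Z_n(c)]$ in $c$, which amounts to controlling the number of small Laplacian eigenvalues of sparse Erd\H{o}s--R\'{e}nyi graphs, together with the identification of $h_d(c)$ for parameters $c$ beyond the first phase transition of the Linial--Meshulam complex; the concentration in low dimensions is a secondary, technical point.
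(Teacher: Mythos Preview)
Your parts (i) and (ii) are essentially the paper's route: rescale $t=s/n$, identify the pointwise limit $Z_n(s)\to h_d(s)/d!$ in probability via the Euler--Poincar\'e identity and the Linial--Peled result, and use the paper's Betti bound (Theorem~\ref{thm:CFdecayBetti}, specialized as Example~\ref{ex:LM}(1)) to produce an $n$-uniform majorant for $\E[Z_n(s)]$. One correction to (ii): the majorant the paper actually obtains is $1\wedge Cs^{-l}$ for any fixed $l\in\N$, coming directly from the trace-moment estimate of Lemma~\ref{lem:ERspectra} on $A[G]^{2l}$, not via a limiting connected-component count; this polynomial decay of arbitrary order, uniform in $n$, is exactly what is needed and avoids any asymptotic-in-$n$ reasoning.

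The genuine difference is your part (iii), which is unnecessary. Since $0\le Z_n(s)\le 1/d!$ uniformly, the convergence in probability from (i) is already convergence in every $L^r$; writing $U_n(s)=\|Z_n(s)1_{[0,n]}(s)-h_d(s)/d!\|_{L^r}$, Minkowski's inequality gives
\[
\Bigl\|L_{d-1}(\cK_n^{(d)})/n^{d-1}-I_{d-1}\Bigr\|_{L^r}\le\int_0^\infty U_n(s)\,ds,
\]
and dominated convergence on $U_n$---with the integrable majorant $(1/d!)^{(r-1)/r}(1\wedge C^{1/r}s^{-l/r})+h_d(s)/d!$ coming from (ii) and the uniform bound---yields the full $L^r$ convergence at once. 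No concentration inequality, no typical-bounded-differences argument, and no separate treatment of $d=1$ via Frieze's theorem is required. This is precisely the paper's argument; your concentration step would also work in principle, but it introduces a layer of dimension-dependent case analysis that the Minkowski route bypasses entirely.
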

\begin{theorem}\label{thm: order of 1-flag}
Let $k\ge 0$. There exist positive constants $c$ and $C$ such that, for sufficiently large $n$, 
\[
c n^{k/2+1-1/(k+1)} \le \E[L_k(\cC_n)] \le C n^{k/2+1-1/(k+1)}. 
\]
\end{theorem}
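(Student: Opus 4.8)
The plan is to prove matching upper and lower bounds for $\E[L_k(\cC_n)]$ of order $n^{k/2+1-1/(k+1)}$. The lower bound is already supplied by Theorem~\ref{thm:HS2}, so the entire work is the upper bound $\E[L_k(\cC_n)]\le Cn^{k/2+1-1/(k+1)}$. By the lifetime formula (Theorem~\ref{thm:lifetimeformula}), $L_k(\cC_n)=\int_0^1\b_k(C_n(t))\,dt$, so it suffices to bound $\int_0^1\E[\b_k(C_n(t))]\,dt$. I would split the interval of integration at the threshold $p_k(n)\sim (\text{const})\,n^{-1/(k+1)}$ at which $H_k$ of the random clique complex vanishes (Kahle~\cite{K1}): for $t$ below this threshold one uses a crude but cheap bound on $\b_k$, and for $t$ above it one needs a quantitative vanishing estimate, which is exactly where the new Betti-number bound advertised in the abstract (the ``quantitative cohomology vanishing theorem'' in terms of small eigenvalues of link Laplacians) enters.

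The concrete steps are as follows. First, for the low-probability regime $t\le p_k(n)$, bound $\b_k(C_n(t))$ by the number of $k$-simplices, $f_k(C_n(t))$, whose expectation is $\binom{n}{k+1}t^{\binom{k+1}{2}}$; integrating this over $[0,p_k(n)]$ (or over the slightly larger range where this term dominates) produces a contribution of the right order $n^{k/2+1-1/(k+1)}$ — this is the source of the exponent, matching the heuristic that $\b_k$ is largest near its threshold. Second, for the high-probability regime, apply the quantitative vanishing bound to $C_n(t)$: this expresses $\b_k$ in terms of the number of links whose $k$-th (up-down) Laplacian has an eigenvalue below some explicit cutoff. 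Third, estimate the expected number of such ``bad'' links using known spectral-gap results for Laplacians on links in random clique complexes (links in $C(n,p)$ are again random clique complexes on fewer vertices, with the relevant eigenvalue concentration following from Garland-type / matrix-concentration arguments as in Kahle's and subsequent work); one shows this expected count decays fast enough in $n$ that its integral over the high-probability range is $O(n^{k/2+1-1/(k+1)})$ or smaller. Fourth, combine the two regimes.

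The main obstacle will be the high-probability regime: one must feed the quantitative vanishing theorem with genuinely good control on the bottom of the spectrum of the link Laplacians of $C_n(t)$, uniformly over the whole range $t>p_k(n)$ up to $t=1$, and near the threshold the spectral gap is small, so the cutoff in the quantitative bound must be chosen carefully as a function of $t$ to make the expected number of bad links summable against $dt$. Balancing the contribution of this regime against the $n^{k/2+1-1/(k+1)}$ coming from the counting bound — and checking that no intermediate scale of $t$ produces a larger contribution — is the delicate part; the rest (the counting estimate, Markov's inequality, and the choice of the splitting point) is routine. A secondary technical point is that the quantitative Betti bound and the spectral estimates must be applied to \emph{reduced} Betti numbers and with the correct indexing of links and Laplacians, so some care with the bookkeeping of dimensions is needed, but this does not affect the order of growth.
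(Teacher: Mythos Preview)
Your overall strategy matches the paper's: reduce to $\int_0^1\E[\b_k(C_n(t))]\,dt$ via the lifetime formula, split near $t\asymp n^{-1/(k+1)}$, use $\b_k\le f_k$ below, and feed the quantitative vanishing theorem (Theorem~\ref{thm:geneCVT}) above. The lower bound is indeed already in \cite{HS}.

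There is, however, a genuine gap in your high-probability step. The ``known spectral-gap results'' you invoke (Theorem~\ref{thm:ERspegap}, \cite{HKP,K1}) apply to an Erd\H{o}s--R\'enyi graph $G(m,p')$ only once $mp'$ exceeds a logarithm. For $C_n(t)$ the link of a $(k-1)$-simplex is (Lemma~\ref{lem:CFlink}) an Erd\H{o}s--R\'enyi graph on $\approx nt^k$ vertices with edge probability $t$, so those results bite only when $nt^{k+1}\gtrsim\log(nt^k)$, i.e.\ $t\gtrsim((\log n)/n)^{1/(k+1)}$. In the intermediate window $n^{-1/(k+1)}\lesssim t\lesssim((\log n)/n)^{1/(k+1)}$ neither of your two bounds is sharp; integrating the trivial one over this window already gives
\[
\int_{n^{-1/(k+1)}}^{((\log n)/n)^{1/(k+1)}} n^{k+1}t^{k(k+1)/2}\,dt\ \asymp\ n^{k/2+1-1/(k+1)}(\log n)^{k/2+1/(k+1)},
\]
a polylogarithmic overshoot. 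Tuning the cutoff in Theorem~\ref{thm:geneCVT} does not help, since that cutoff is fixed at $1-(k+1)^{-1}$ by the Garland identity.

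The missing ingredient, which the paper supplies, is a trace-moment bound (Lemma~\ref{lem:ERspectra}) on the \emph{expected number} of eigenvalues of $A[G(m,p)]$ above a fixed level, valid all the way down to $mp\asymp 1$. Note also that Theorem~\ref{thm:geneCVT} bounds $\b_k$ by a \emph{sum of small-eigenvalue counts} over links, not merely by the number of ``bad'' links; it is this eigenvalue-counting lemma, not a spectral-gap concentration statement, that must be plugged in. Doing so yields the uniform pointwise estimate (Theorem~\ref{thm:CFdecayBetti})
\[
\E[\b_k(C_n(t))]\ \le\ n^{k+1}t^{\binom{k+1}{2}}\bigl\{1\wedge C(nt^{k+1})^{-l}\bigr\}\qquad\text{for every fixed }l\in\N,
\]
which integrates over $[0,1]$ to $O(n^{k/2+1-1/(k+1)})$ with no logarithmic loss. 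With Lemma~\ref{lem:ERspectra} in hand your outline becomes a complete proof.
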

As seen from Theorem~\ref{thm: order of 1-flag}, the exponent of the lower estimate in \eqref{eq: Hiraoka--Shirai 2} is exact.
To prove these theorems, we introduce a new upper estimate of the Betti numbers of general simplicial complexes (Theorem~\ref{thm:geneCVT}). This estimate is a quantitative version of the cohomology vanishing theorem~(\cite{G,BS}, see also Theorem~\ref{thm: cohomology vanishing theorem}). By applying this theorem and modified versions of known estimates to a family of multi-parameter random simplicial complexes, including the $d$-Linial--Meshulam complex and the random clique complex, we obtain several inequalities involving the expectations of Betti numbers. Theorem~\ref{thm:CFdecayBetti}, in particular, provides an essentially new upper estimate. By integrating these inequalities with respect to the filtration parameter, we obtain good estimates of 
$\E[L_k(\cX_n)]$ for a class of random filtrations over $n$ vertices (Theorems~\ref{thm:CForder} and \ref{thm:CForder2}). 
Theorem~\ref{thm: order of 1-flag} is a special case of these theorems, and Theorem~\ref{thm:HS1} also follows from them.
The proof of Theorem~\ref{thm:HS1} in \cite{HS} is based on the monotonicity of $\b_{d-1}(K_n^{(d)}(t))$ with respect to $t$. Our approach is different and is applicable to more general random filtrations. For the proof of Theorem~\ref{thm:LMlimiting} and an extension (Theorem~\ref{thm:LMlimiting2}), we additionally make use of the result by Linial and Peled~\cite{LP} on the convergence of $\b_d(K^{(d)}_n(c/n))/n^d$ as $n\to\infty$ for each $c\ge0$.

This paper is organized as follows. In Section~2, we provide fundamental concepts for graphs and simplicial complexes and derive upper estimates of Betti numbers as a quantitative generalization of the cohomology vanishing theorem. In Section~3, we provide several estimates for the expectations of Betti numbers for a class of random simplicial complexes. In Section~4, we introduce the concept of persistent homologies and prove the main theorems about lifetime sums for random simplicial complex processes. 

\begin{notation}
We use the Bachmann--Landau big-$O$, little-$o$, and some related notation associated with $n$ (the number of vertices) tending to $\infty$. Furthermore, for non-negative functions $f(n)$ and $g(n)$, 
\begin{itemize}
\item $f(n)=\Om(g(n))$ means that $g(n)=O(f(n))$; and
\item $f(n) \asymp g(n)$ means that $f(n) = O(g(n))$ and $g(n) = O(f(n))$.
\end{itemize}
The notation $X\sim \nu$ indicates that a random variable $X$ has probability distribution $\nu$.
For $a,b\in\R$, $a\vee b$ and $a\wedge b$ denote $\max\{a,b\}$ and $\min\{a,b\}$, respectively.
\end{notation}
\section{Upper bounds of Betti numbers of simplicial complexes}
\subsection{Preliminaries and statement of results}
Let $V$ be a finite set.
For $k\ge0$, $\binom{V}{k}$ denotes the set of all subsets $A$ of $V$ whose cardinalities $\#A$ are $k$. Note that $\binom{V}{0}$ contains a single element $\emptyset$. Assume $V\ne\emptyset$ and let $E\subset \binom{V}{2}$. We regard $V$ and $E$ as a vertex set and an edge set, respectively, and call $G = V \sqcup E$ an undirected graph on $V$. Throughout this article, graphs are simple undirected finite graphs, with no multiple edges and no self-loops.

Saying that $v\in V$ is adjacent to $w\in V$ means $\{v,w\}\in E$.
For $v\in V$, the degree of $v$ is defined as $\#\{w\in V\mid \{v,w\}\in E\}$ and is denoted by $\deg(v)$.
A vertex $v\in V$ is called isolated if $\deg(v)=0$.
The averaging matrix $A[G]=\{a_{vw}\}_{v, w\in V}$ associated with $G$ is defined by
\begin{align*}
a_{vw} &:= \begin{cases}
1/\deg(v)	&\text{if  $w$ is adjacent to $v$,}\\
1				&\text{if $v$ is isolated and $v = w$,}\\
0				&\text{otherwise}.
\end{cases}
\end{align*}
The Laplacian $\cL[G]$ of a simple random walk on $G$ is defined by 
$\cL[G] := I_{V} - A[G]$, where $I_{V}$ is the matrix that acts as the identity operator on $V$. 
All the eigenvalues of $\cL[G]$ are real and belong to the interval $[0, 2]$ (see Section~2.2 for details). Note that the nonzero vectors that are constant on each connected component are eigenvectors with associated eigenvalue zero, and the number of connected components of $G$ coincides with the multiplicity of the zero eigenvalues. When $\#V\ge2$, $\lm_2[G]$ denotes the second smallest eigenvalue of $\cL[G]$, counting multiplicities. In particular, $\lm_2[G]>0$ if and only if $G$ is connected. We call $\lm_2[G]$ the spectral gap of $G$. 
By convention, $\lm_2[\emptyset] = 0$, and we set $\lm_2[G] = 0$ for $\#V=1$.

We next introduce the concept of simplicial complexes, which are higher-dimensional counterparts of graphs.
\begin{definition}
Let $V$ be a nonempty finite set and $X$ a collection of nonempty subsets of $V$. 
$X$ is called an abstract simplicial complex on $V$ if $X$ satisfies the following two conditions. 
\begin{enumerate}
\item $\{v\} \in X$ for all $v\in V$;
\item If $\sg\in X$ and $\emptyset \neq \tau \subset \sg$, then $\tau\in X$.
\end{enumerate}
\end{definition}
In what follows, we omit the word ``abstract'' and simply call $X$ a simplicial complex.
For $\sg\in X$, its dimension $\dim \sg$ is defined to be $\#\sg-1$.
We call $\sg\in X$ with $\dim\sg = k$ a $k$-dimensional simplex or, equivalently, a $k$-simplex. The dimension of $X$ is defined as the maximum among the dimensions of the simplices in $X$. Graphs are regarded as zero- or one-dimensional simplicial complexes in a natural manner.
We say that $\tau\in X$ is a face of $\sg\in X$ whenever $\tau\subset\sigma$.
For $k\ge0$, $X_k$ denotes the set of all $k$-simplices of $X$. By convention, we regard $\emptyset$ as a $(-1)$-simplex and set $X_{-1}=\{\emptyset\}$. 
The $k$-dimensional skeleton $X^{(k)}$ of $X$ is defined by $X^{(k)} := \bigsqcup_{j=0}^k X_j$. The simplicial complex $X$ is said to include the complete $k$-dimensional skeleton if $X_k = \binom{V}{k+1}$. 

Given a simplicial complex $X$ on $V$ and $k\ge0$, an ordered sequence $(v_0, v_1, \ldots, v_k)$ consisting of $k+1$ distinct elements of $V$ is called an ordered ($k$-)simplex of $X$ if $ \{v_0, v_1, \ldots, v_k\} \in X_k$.
The collection of all ordered $k$-simplices of $X$ is denoted by $\Sg(X_k)$, with $\Sg(X) := \bigsqcup_j\Sg(X_j)$. 
By convention, we set $\Sg(X_{-1})=\Sg(\{\emptyset\}):=\{\emptyset\}$. 
Two ordered simplices are called equivalent if they can be transformed into each other by an even permutation. The equivalence class of an ordered simplex $\sg = (v_0, v_1, \ldots, v_k)$ is denoted by $\la\sigma\ra$ or $\la v_0, v_1, \ldots, v_k\ra$ and is called the oriented simplex generated by $\sg$. Let $C_k(X)$ be the $\R$-vector space of all linear combinations of oriented $k$-simplices in $X$ with coefficients in $\R$ under the relation that $\la v_0, v_1, \ldots, v_k\ra = -\la v_1, v_0, \ldots, v_k\ra$ for any oriented $k$-simplices. 
We set $C_{-1}(X) = \R$ per convention. For $k \ge 1$, the $k$-th boundary map $\del_k\colon C_k(X)\to C_{k-1}(X)$ is well-defined as a linear extension of
\[
\del_k\la v_0, v_1, \ldots, v_k\ra := \sum_{i=0}^k(-1)^i\la v_0, \ldots, v_{i-1}, v_{i+1}, \ldots,  v_k\ra
\]
for $\la v_0, v_1, \ldots, v_k\ra\in C_k(X)$. We also define a linear map $\del_0\colon C_0(X)\to\R$ such that $\del_0\la v\ra = 1$ for $v\in V$. For all $k\ge 0$, it holds that $\del_k\circ\partial_{k+1} = 0$, that is, $\ker\partial_k\supset \im\partial_{k+1}$. The $k$-th homology vector space of $X$ over $\R$ is defined by $H_k(X) := \ker\partial_k / \im\partial_{k+1}$. The dimension of this space is called the $k$-th Betti number of $X$ and is denoted by $\b_k(X)$. 
\begin{rem}
In the usual definitions for homologies, $\del_0$ is defined as a zero operator. This difference makes the Betti number $\b_0(X)$ as defined above smaller by 1 than the conventional Betti number. In this sense, $\b_k(X)$ in our definition is often called the \emph{reduced} Betti number. For simplicity, we omit the word ``reduced'' throughout this paper.
\end{rem}
Let $k\ge0$.
A real-valued function $f$ on $\Sg(X_k)$ is called a $k$-cochain if $f$ is alternating, that is, if $f(v_{\xi(0)},v_{\xi(1)},\dots,v_{\xi(k)})=(\sgn \xi)\,f(v_0,v_1,\dots,v_k)$ for all $(v_0,v_1,\dots,v_k)\in\Sg(X_k)$ and all permutations $\xi$ on $\{0,1,\dots,k\}$. The real vector space $C^k(X)$ formed by all $k$-cochains is called the $k$-cochain vector space.  We set $C^{-1}(X) = \R$ per convention. The $k$-th coboundary map $d_k\colon C^k(X)\to C^{k+1}(X)$ is defined by the linear extension of
\[
d_k\varphi(\sg) := \sum_{i=0}^{k+1} (-1)^i\varphi(\sg_i)
\]
for $\varphi\in C^k(X)$ and $\sg = (v_0, \ldots, v_{k+1})\in \Sg(X_{k+1})$, where 
\begin{equation}\label{eq:sg_i}
\sg_i := (v_0, \ldots, v_{i-1}, v_{i+1}, \ldots, v_{k+1})\in\Sg(X_k).
\end{equation}
By definition, $d_{-1}\varphi$ for $\varphi\in C^{-1}(X)=\R$ is identically $\varphi$ on $\Sg(X_0)$. 
Elements of $\im d_{k-1}$ (resp., $\ker d_k$) are called $k$-coboundaries ($k$-cocycles). That $d_k\circ d_{k-1} = 0$, that is, $\ker d_k \supset \im d_{k-1}$, can be verified. The $k$-th cohomology vector space of $X$ is then defined by
$H^k(X) := {\ker d_k}/{\im d_{k-1}}$. 
Note that $H^k(X)$ is isomorphic to $H_k(X)$. 

To state the cohomology vanishing theorem and its quantitative generalization, we further introduce the concept of links of simplicial complexes. Given a $D$-dimensional simplicial complex $X$ and a $j$-simplex 
$\tau$ in $X$ with $-1 \le j \le D$, we define the link $\lk_X(\tau)$ of $\tau$ in $X$ by
\[
\lk_X(\tau) := \{\sg \in X\mid \tau\cap\sg=\emptyset\text{ and }\tau\cup\sigma \in X\}. 
\]
Note that $\lk_X(\tau)$ is either the empty set or a simplicial complex with dimension at most $D - j -1$. If the codimension of $\tau$ in $X$ is no more than $2$ (i.e., $D - j \le 2$), then $\lk_X(\tau)$ is either the empty set or a graph. By definition, $\lk_X(\emptyset)$ is always equal to $X$. 

A simplicial complex $X$ is said to be pure $D$-dimensional if, for every simplex $\sg$ in $X$, there exists some $D$-simplex containing $\sg$. 
Note that a pure $D$-dimensional simplicial complex  is $D$-dimensional. 
The following is a special case of Theorem 2.1 of \cite{BS}.
\begin{theorem}[Cohomology vanishing theorem {\cite{G,BS}}]		\label{thm: cohomology vanishing theorem}
Let $D\ge 1$, and let $X$ be a pure $D$-dimensional simplicial complex such that $\lm_2[\lk_X(\tau)] > 1 - D^{-1}$ for every $(D-2)$-simplex $\tau\in X$.
Then, $H^{D-1}(X) = \{0\}$. 
\end{theorem}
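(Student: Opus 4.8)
The plan is to run the Garland (local‑to‑global) method. First I would reduce the vanishing of $H^{D-1}(X)$ to the nonexistence of a nonzero harmonic $(D-1)$-cochain, and then control the relevant quadratic form link by link, exploiting the hypothesis on the spectral gaps $\lm_2[\lk_X(\tau)]$ of the links of $(D-2)$-simplices $\tau$ (which are finite graphs, since such $\tau$ have codimension $2$ in $X$).

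To set things up, note that purity makes $m_k(\sg):=\#\{F\in X_D\mid\sg\subset F\}$ a strictly positive weight on each $k$-simplex $\sg$ for $0\le k\le D$ (with $m_D\equiv1$); I equip $C^k(X)$ with the inner product $\la\varphi,\psi\ra:=\sum_{\sg\in X_k}m_k(\sg)\varphi(\sg)\psi(\sg)$ for $0\le k\le D$ and $C^{-1}(X)=\R$ with the standard one. With respect to these inner products the purely linear-algebraic Hodge decomposition gives $H^{D-1}(X)\cong\{\varphi\in C^{D-1}(X)\mid d_{D-1}\varphi=0,\ d_{D-2}^{*}\varphi=0\}$, so it is enough to show that every harmonic cochain $\varphi$ vanishes. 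For $\tau\in X_{D-2}$ write $\varphi_\tau$ for the localization $w\mapsto\varphi(\tau\cup\{w\})$ of $\varphi$, a function on the vertices of $\lk_X(\tau)$. Purity forces $\lk_X(\tau)$ to be a finite graph with at least one edge and no isolated vertices, with $\deg_{\lk_X(\tau)}(w)=m_{D-1}(\tau\cup\{w\})$; and since $\lm_2[\lk_X(\tau)]>1-D^{-1}\ge0$, the graph $\lk_X(\tau)$ is connected, so $\cL[\lk_X(\tau)]$ is self-adjoint with kernel $\R\mathbf{1}$ with respect to the degree-weighted inner product $f\mapsto\sum_w\deg_{\lk_X(\tau)}(w)f(w)^2$, for which $\|\varphi_\tau\|^2=\sum_w m_{D-1}(\tau\cup\{w\})\varphi(\tau\cup\{w\})^2$ recovers precisely the part of $\|\varphi\|^2$ contributed by the $(D-1)$-simplices containing $\tau$.

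The key step, which I expect to be the only delicate one, is the Garland-type identity
\[
\sum_{\tau\in X_{D-2}}\la\cL[\lk_X(\tau)]\varphi_\tau,\varphi_\tau\ra=\|d_{D-1}\varphi\|^2+(D-1)\|\varphi\|^2\qquad(\varphi\in C^{D-1}(X)).
\]
I would prove this by expanding $\|d_{D-1}\varphi\|^2=\sum_{F\in X_D}(d_{D-1}\varphi(F))^2$: the diagonal contributions sum to $\|\varphi\|^2$, while every off-diagonal term $\pm\varphi(F')\varphi(F'')$ pairs two $(D-1)$-faces $F',F''$ of a $D$-simplex $F$, and these share a unique $(D-2)$-face $\tau$; organizing the sum by this $\tau$ and carrying out the orientation/sign bookkeeping yields
\[
\|d_{D-1}\varphi\|^2=\|\varphi\|^2-2\sum_{\tau\in X_{D-2}}\ \sum_{\{w_1,w_2\}}\varphi_\tau(w_1)\varphi_\tau(w_2),
\]
the inner sum over pairs $\{w_1,w_2\}$ adjacent in $\lk_X(\tau)$. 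Using the graph identity $2\sum_{\{w_1,w_2\}}\varphi_\tau(w_1)\varphi_\tau(w_2)=\|\varphi_\tau\|^2-\la\cL[\lk_X(\tau)]\varphi_\tau,\varphi_\tau\ra$ together with the double count $\sum_{\tau\in X_{D-2}}\|\varphi_\tau\|^2=D\|\varphi\|^2$ (each $(D-1)$-simplex has $D$ faces of dimension $D-2$) then produces the identity. The main obstacle here is purely a matter of bookkeeping: fixing the weights and keeping the orientation signs straight so that the forms that appear are genuinely the random-walk Laplacians $\cL[\lk_X(\tau)]$ in the normalization of the statement.

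Granting the identity, the conclusion is immediate. A short adjoint computation shows that $d_{D-2}^{*}\varphi=0$ is equivalent to $\la\varphi_\tau,\mathbf{1}\ra=0$ for all $\tau\in X_{D-2}$, i.e.\ $\varphi_\tau\perp\ker\cL[\lk_X(\tau)]$, so $\la\cL[\lk_X(\tau)]\varphi_\tau,\varphi_\tau\ra\ge\lm_2[\lk_X(\tau)]\|\varphi_\tau\|^2$. Inserting $d_{D-1}\varphi=0$ into the identity and using this bound gives
\[
(D-1)\|\varphi\|^2=\sum_{\tau\in X_{D-2}}\la\cL[\lk_X(\tau)]\varphi_\tau,\varphi_\tau\ra\ge(1-D^{-1})\sum_{\tau\in X_{D-2}}\|\varphi_\tau\|^2=(1-D^{-1})D\|\varphi\|^2=(D-1)\|\varphi\|^2,
\]
so equality holds in every term; since $\lm_2[\lk_X(\tau)]>1-D^{-1}$ strictly, each equality $\la\cL[\lk_X(\tau)]\varphi_\tau,\varphi_\tau\ra=(1-D^{-1})\|\varphi_\tau\|^2$ forces $\varphi_\tau=0$. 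As every $(D-1)$-simplex $\sg$ equals $\tau\cup\{v\}$ for $\tau:=\sg\setminus\{v\}\in X_{D-2}$, this gives $\varphi=0$, hence $H^{D-1}(X)=\{0\}$. (For $D=1$ the identity degenerates and the argument reduces to the statement that a connected graph has vanishing reduced $H^0$.)
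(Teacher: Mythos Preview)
Your proposal is correct and is essentially the same Garland-type argument the paper uses: the paper's Lemma~2.5 is exactly your localization identity (after passing between ordered and unordered $(D-2)$-simplices), Lemma~2.6 is your observation that $d_{D-2}^{*}\varphi=0$ means each $\varphi_\tau$ is orthogonal to constants, and the paper's remark that Theorem~2.2 ``can now be proved by combining Lemma~2.7 with~(2.7)'' amounts to your final chain of inequalities specialized to the case $A_2^\tau=\{0\}$. The only cosmetic differences are normalization (the paper carries a $1/(k+1)!$ and sums over ordered simplices) and that the paper packages the spectral estimate as the slightly more general inequality~(2.10) before specializing.
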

The main purpose of this section is to generalize Theorem~\ref{thm: cohomology vanishing theorem} to an upper estimate of the Betti number. This estimate is one of the key elements of the arguments in the later sections.
\begin{definition}
Let $G$ be a graph on $V$ and let $\{\lm_i\}_{i=1}^{\#V}$ be all the not necessarily distinct  eigenvalues of $\cL[G]$. We define
\[
\gm(G;\a) := \#\left\{i\relmiddle|\lm_i\le\a\right\} -1
\]
for $\a\ge0$. We also set $\gm(\emptyset;\a) := 0$ per convention.
\end{definition}
\begin{theorem}	\label{thm:geneCVT}
Let $X$ be a simplicial complex and $D\ge1$. Then, 
\begin{equation}
\b_{D-1}(X) \le \sum_{\tau \in X_{D-2}} \gm\bigl(\lk_X(\tau)^{(1)};1-D^{-1}\bigr). 	\label{eq:geneCVT}
\end{equation}
\end{theorem}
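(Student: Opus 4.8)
The plan is to run Garland's method. The first step is to fix inner products. On $C^{D-1}(X)$, set $\langle\varphi,\varphi'\rangle:=\sum_{\sg\in X_{D-1}}w(\sg)\,\varphi(\sg)\varphi'(\sg)$ with $w(\sg):=\#\{\rho\in X_D\mid\sg\subset\rho\}\vee 1$; on $C^D(X)$, use the unweighted inner product; and on $C^0\bigl(\lk_X(\tau)^{(1)}\bigr)$, for each $\tau\in X_{D-2}$ with $\lk_X(\tau)\ne\emptyset$, set $\langle f,g\rangle_{\mu_\tau}:=\sum_v\mu_\tau(v)f(v)g(v)$ with $\mu_\tau(v):=\deg(v)\vee1$, $\deg$ being the degree in $\lk_X(\tau)^{(1)}$. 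With these choices all inner products are positive definite, $\cL[\lk_X(\tau)^{(1)}]$ is self-adjoint and nonnegative for $\langle\cdot,\cdot\rangle_{\mu_\tau}$, and $\cL[\lk_X(\tau)^{(1)}]\mathbf 1=0$ (since $D\ge1$, the constant function $\mathbf 1$ is an eigenvector with eigenvalue $0\le 1-D^{-1}$). Let $W_\tau$ be the span of all eigenvectors of $\cL[\lk_X(\tau)^{(1)}]$ with eigenvalue $\le 1-D^{-1}$ and let $\tilde W_\tau$ be the orthogonal complement of $\mathbf 1$ in $W_\tau$ with respect to $\langle\cdot,\cdot\rangle_{\mu_\tau}$; then $\dim\tilde W_\tau=\dim W_\tau-1=\gm\bigl(\lk_X(\tau)^{(1)};1-D^{-1}\bigr)$. (The modification ``$\vee1$'' is needed because the plain weight $\#\{\rho\in X_D\mid\sg\subset\rho\}$ occurring in the classical Garland identity is degenerate unless $X$ is pure $D$-dimensional; it will leave the identities usable and produce only a favorable correction term.) The goal is to identify $\b_{D-1}(X)$ with $\dim\mathcal H$ for a harmonic space $\mathcal H\subset C^{D-1}(X)$ and to build an injective linear map $\mathcal H\to\bigoplus_{\tau\in X_{D-2}}\tilde W_\tau$; since empty links contribute $\tilde W_\tau=\{0\}$ and $\gm(\emptyset;\,\cdot\,)=0$, this yields \eqref{eq:geneCVT}.

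I would then assemble three ingredients. (1) For $\tau\in X_{D-2}$ with $\lk_X(\tau)\ne\emptyset$, define the restriction $r_\tau\colon C^{D-1}(X)\to C^0\bigl(\lk_X(\tau)^{(1)}\bigr)$ by $(r_\tau\varphi)(v):=[\,\tau\cup\{v\}:\tau\,]\,\varphi(\tau\cup\{v\})$, where $[\,\tau\cup\{v\}:\tau\,]$ is the incidence coefficient of $\tau$ in $\tau\cup\{v\}$ used in the coboundary map; the combined map $\varphi\mapsto(r_\tau\varphi)_\tau$ is injective because every $\sg\in X_{D-1}$ has a $(D-2)$-face $\tau$, for which $v:=\sg\setminus\tau$ is a vertex of $\lk_X(\tau)$. (2) The Garland identity: expanding $\langle\cL[\lk_X(\tau)^{(1)}]\,r_\tau\varphi,\,r_\tau\varphi\rangle_{\mu_\tau}$ as a sum over the edges of the link and reindexing by the $D$-simplices containing $\tau$, one obtains, for every $\varphi\in C^{D-1}(X)$,
\[
\sum_{\tau\in X_{D-2}}\bigl\langle\cL[\lk_X(\tau)^{(1)}]\,r_\tau\varphi,\ r_\tau\varphi\bigr\rangle_{\mu_\tau}=(D-1)\sum_{\sg\in X_{D-1}}\#\{\rho\in X_D\mid\sg\subset\rho\}\,\varphi(\sg)^2+\sum_{\rho\in X_D}\bigl(d_{D-1}\varphi(\rho)\bigr)^2,
\]
together with $\sum_{\tau\in X_{D-2}}\|r_\tau\varphi\|_{\mu_\tau}^2=D\sum_{\sg\in X_{D-1}}w(\sg)\,\varphi(\sg)^2=D\|\varphi\|^2$. (3) The codifferential in terms of links: for all $\varphi\in C^{D-1}(X)$ and $\psi\in C^{D-2}(X)$, a direct computation (using $w(\tau\cup\{v\})=\mu_\tau(v)$) gives $\langle\varphi,d_{D-2}\psi\rangle=\sum_{\tau\in X_{D-2}}\psi(\tau)\,\langle r_\tau\varphi,\mathbf 1\rangle_{\mu_\tau}$; hence $\varphi\perp\im d_{D-2}$ if and only if $\langle r_\tau\varphi,\mathbf 1\rangle_{\mu_\tau}=0$ for every $\tau\in X_{D-2}$.

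Now combine these. Put $\mathcal H:=\ker d_{D-1}\cap(\im d_{D-2})^{\perp}$. Since $\im d_{D-2}\subset\ker d_{D-1}$ and the inner product on $C^{D-1}(X)$ is positive definite, $\mathcal H$ is the orthogonal complement of $\im d_{D-2}$ inside $\ker d_{D-1}$, so $\dim\mathcal H=\dim\ker d_{D-1}-\dim\im d_{D-2}=\dim H^{D-1}(X)=\b_{D-1}(X)$. Fix $\varphi\in\mathcal H$. As $d_{D-1}\varphi=0$, subtracting the two identities of (2) and isolating the contribution of the $(D-1)$-simplices lying in no $D$-simplex gives
\[
\sum_{\tau\in X_{D-2}}\Bigl\langle\bigl((1-D^{-1})I-\cL[\lk_X(\tau)^{(1)}]\bigr)\,r_\tau\varphi,\ r_\tau\varphi\Bigr\rangle_{\mu_\tau}\ \ge\ 0,
\]
the slack being $(D-1)$ times the sum of $\varphi(\sg)^2$ over those $\sg$. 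By (3), $\langle r_\tau\varphi,\mathbf 1\rangle_{\mu_\tau}=0$, so the $\langle\cdot,\cdot\rangle_{\mu_\tau}$-orthogonal projection $P_\tau r_\tau\varphi$ of $r_\tau\varphi$ onto $W_\tau$ actually lies in $\tilde W_\tau$. The map $\Phi\colon\mathcal H\to\bigoplus_{\tau\in X_{D-2}}\tilde W_\tau$, $\varphi\mapsto(P_\tau r_\tau\varphi)_\tau$, is injective: if $\Phi(\varphi)=0$ then each $r_\tau\varphi$ lies in the span of eigenvectors of $\cL[\lk_X(\tau)^{(1)}]$ with eigenvalue $>1-D^{-1}$, hence every summand in the last display is $\le0$ and vanishes only when $r_\tau\varphi=0$, so the inequality forces $r_\tau\varphi=0$ for all $\tau$, whence $\varphi=0$ by (1). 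Therefore $\b_{D-1}(X)=\dim\mathcal H\le\sum_{\tau\in X_{D-2}}\dim\tilde W_\tau=\sum_{\tau\in X_{D-2}}\gm\bigl(\lk_X(\tau)^{(1)};1-D^{-1}\bigr)$.

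The hard part is the Garland identity in (2): it must be obtained by a careful orientation/sign computation — each $D$-simplex $\rho=(u_0,\dots,u_D)$ contributes $\sum_{0\le i<j\le D}\bigl((-1)^i\varphi(\rho_j)-(-1)^{j-1}\varphi(\rho_i)\bigr)^2$, which one has to recognize as $(D-1)\sum_{k=0}^D\varphi(\rho_k)^2+\bigl(d_{D-1}\varphi(\rho)\bigr)^2$ — and one must check that the correction forced by the ``$\vee1$'' modification enters the final inequality on the ``$\ge0$'' side rather than against it (this is precisely why the weights were defined with $\vee1$, matching $w(\tau\cup\{v\})=\mu_\tau(v)$). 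Ingredients (1) and (3), the Hodge-theoretic identification $\dim\mathcal H=\b_{D-1}(X)$, and the closing spectral and dimension-counting steps are then routine.
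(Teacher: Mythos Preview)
Your argument is correct. The Garland identity, the sign computation $\sum_{i<j}((-1)^i\varphi(\rho_j)-(-1)^{j-1}\varphi(\rho_i))^2=(D-1)\sum_k\varphi(\rho_k)^2+(d_{D-1}\varphi(\rho))^2$, the identity $w(\tau\cup\{v\})=\mu_\tau(v)$, and the injectivity step all check out; the correction term coming from the ``$\vee1$'' does indeed land on the favorable side, yielding exactly $(D-1)\sum_{\sigma\in M_{D-1}(X)}\varphi(\sigma)^2\ge0$ once $d_{D-1}\varphi=0$.

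Both your proof and the paper's run Garland's method and construct the same injective map into $\bigoplus_\tau \tilde W_\tau$, but they handle non-purity differently. The paper first passes to an auxiliary pure $D$-dimensional complex $\overline{X}^D$ obtained by coning each maximal $(D-1)$-simplex with a new vertex, shows $\b_{D-1}(X)=\b_{D-1}(\overline{X}^D)$ and that the sum of $\gamma$'s is unchanged (its Lemma on replacing isolated vertices by isolated edges), and then applies the pure-case Garland argument inside the enlarged cochain space $C^{D-1}(\overline{X}^D)$. You instead stay in $C^{D-1}(X)$ and absorb the non-purity into the weights via ``$\vee1$''. The two devices are closely related --- for $\sigma\in X_{D-1}$ one has $w(\sigma)=m_{\overline{X}^D}(\sigma)$ --- but your route avoids introducing auxiliary simplices and the separate lemma matching the $\gamma$-sums, at the cost of having to verify that the weight modification produces an inequality with the right sign. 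The paper's route cleanly decouples the ``make it pure'' step from the spectral step, which is conceptually tidy; your route is shorter and stays intrinsic to $X$.
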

Recall that the graph $\lk_X(\tau)^{(1)}$ is the one-dimensional skeleton of the simplicial complex $\lk_X(\tau)$.
This theorem is an extension of Theorem~\ref{thm: cohomology vanishing theorem}. In fact, under the assumptions of Theorem~\ref{thm: cohomology vanishing theorem}, $\gm\bigl(\lk_X(\tau)^{(1)};1-D^{-1}\bigr)=\gm(\lk_X(\tau);1-D^{-1})=0$ for every $\tau\in X_{D-2}$, so that $\b_{D-1}(X)=0$.

We devote the rest of this section to proving Theorem~\ref{thm:geneCVT}.
The proof is based on careful modifications to the proof of Theorem~\ref{thm: cohomology vanishing theorem} and a nice transformation of $X$ to remove the assumption of pure dimensionality.
\subsection{Auxiliary operators}
Here, we give an overview of some preliminary concepts and facts about simplicial complexes as described in \cite{BS} and state them in a way that is useful for this research.
Let $X$ be a pure $D$-dimensional simplicial complex. For simplices $\sg$ in $X$, $m(\sg)$ denotes the number of $D$-simplices containing $\sg$.
Note that $m(\cdot) \ge 1$ from the assumption of pure $D$-dimensionality. 
For ordered simplices $\sg=(v_0,\dots,v_k)\in\Sg(X)$, $m(\sg)$ is defined as $m(\{v_0,\dots,v_k\})$. Simple calculations show that, for all $-1 \le k \le D$ and $\tau\in \Sg(X_k)$, 
\[
\sum_{\sigma\in\Sg(X_{k+1});\, \sigma\supset\tau}m(\sg) = (k+2)!\,(D-k)m(\tau), 
\]
where $\sigma\supset\tau$ implies that all vertices of $\tau$ are vertices of $\sg$. 
We equip $C^k(X)$ with an inner product $(\cdot,\cdot)$ defined by 
\begin{align}
(\varphi, \psi)&=\frac{1}{(k+1)!}\sum_{\sg\in\Sg(X_k)}m(\sg)\varphi(\sg)\psi(\sg)
\quad\text{for }\varphi, \psi\in C^k(X),\ k\ge0
 \label{eq:naiseki1}\\
\shortintertext{and}
(\varphi,\psi)&=\#X_D\varphi\psi\quad\text{for }\varphi, \psi\in C^{-1}(X)=\R. \label{eq:naiseki2}
\end{align}
The induced norm is denoted by $\|\cdot\|$. For $k\ge-1$, denote by $\dl_{k+1}\colon C^{k+1}(X)\to C^k(X)$ the adjoint operator of $d_k$, that is, the unique operator satisfying $(d_k\varphi, \psi) = (\varphi, \dl_{k+1}\psi)$ for all $\varphi\in C^k(X)$ and $\psi\in C^{k+1}(X)$. For  ordered simplices $\sg=(v_0,\ldots,v_k)\in\Sg(X_k)$ and $\tau=(w_0,\ldots,w_l)\in\Sg(X_l)$, the notation $\sg\tau$ indicates an ordered sequence $(v_0,\ldots,v_k,w_0,\ldots,w_l)$. A straightforward calculation gives the following expressions.
For $\varphi\in C^k(X)$, $\psi\in C^{k+1}(X)$, and $\sg\in\Sg(X_k)$ with $k\ge0$,
\begin{align}
\dl_{k+1}\psi(\sg) &= \sum_{\substack{v\in\Sg(X_0);\\v\sigma\in\Sg(X_{k+1})}} \frac{m(v\sigma)}{m(\sg)}\psi(v\sigma)\nonumber\\
\shortintertext{and}
\dl_{k+1} d_k\varphi(\sg) &= (D-k)\varphi(\sg) - \sum_{\substack{v\in\Sg(X_0);\\v\sigma\in\Sg(X_{k+1})}}\sum_{i=0}^k(-1)^i\frac{m(v\sigma)}{m(\sg)}\varphi(v\sigma_i).\label{eq:upLap}
\end{align}
See \eqref{eq:sg_i} for the definition of $\sg_i$.

For $k\ge 0$, the down Laplacian and the up Laplacian on $C^k(X)$ are defined by
$
L_k^{\text{down}} := d_{k-1}\dl_k$ and $L_k^{\text{up}} := \dl_{k+1}d_k
$, respectively.
The Laplacian $L_k$ on $C^k(X)$ is defined by 
\[
L_k := L_k^{\text{down}} + L_k^{\text{up}}. 
\]
Note that $L_k^{\text{down}}$, $L_k^{\text{up}}$, and $L_k$ are self-adjoint and non-negative definite operators with respect to the inner product \eqref{eq:naiseki1} and, further, that the relations
\begin{align}
\ker L_k^{\text{down}} &= \ker \dl_k, \label{eq:kerdel}\\
\ker L_k^{\text{up}} &= \ker d_k, \label{eq:kerd}\\
H^k(X)&\simeq \ker L_k^{\text{down}} \cap \ker L_k^{\text{up}}=\ker L_k \label{eq:kerLap}
\end{align}
hold, which can be shown by simple calculations. 
We also note that if $G$ is a pure one-dimensional simplicial complex on $V$ (i.e., a graph without isolated vertices), then $L_0^{\text{up}} = \cL[G]$ from \eqref{eq:upLap}. 
By combining the fact that the transpose of $A[G]$ is a stochastic matrix, the eigenvalues of $\cL[G]$ are all real and lie between $0$ and $2$.
\subsection{Localization}
Let $X$ be a pure $D$-dimensional simplicial complex, and let $\tau = (v_0, \ldots, v_j)\in\Sg(X_j)$ be a fixed ordered $j$-simplex in $X$ with $-1\le j \le D$. We write $\lk_X(\tau)$ for $\lk_X(\{v_0, \ldots, v_j\})$ and define $m_\tau(\eta) = m(\tau\eta)$ for $\eta\in\lk_X(\tau)$. In other words, $m_\tau(\eta)$ is the number of $(D-j-1)$-simplices in $\lk_X(\tau)$ containing $\eta$.  For $0 \le l \le D-j-1$, $\Sg(\lk_X(\tau)_l)$ denotes the set of all ordered $l$-simplices of $\lk_X(\tau)$. Since $\lk_X(\tau)$ is a pure $(D-j-1)$-dimensional simplicial complex, we can define various concepts for $\lk_X(\tau)$, as we did in the previous subsection for $X$, by replacing $X$ with $\lk_X(\tau)$. We distinguish the concepts from those for $X$ by adding the superscript (or subscript) $\tau$ in the notation. For example, the coboundary operator on $C^l(\lk_X(\tau))$ is denoted $d_l^\tau$. Other symbols are indicated by $(\cdot,\cdot)_\tau, \|\cdot\|_\tau, \dl_l^\tau$, and so on. 
\begin{definition}
Let $-1 \le j < k \le D$ and take $\tau\in\Sg(X_j)$ and $\varphi\in C^k(X)$. The localization $\varphi_\tau\in C^{k-j-1}(\lk_X(\tau))$ of $\varphi$ with respect to $\tau$ is defined by
\[
\varphi_\tau(\eta) = \varphi(\tau\eta)
\quad\text{for }\eta\in\Sg(\lk_X(\tau)_{k-j-1}).
\]
\end{definition}
A straightforward calculation gives the following identities. 
\begin{lemma}[{\cite[Lemmas~1.10 and~1.12, and Corollary~1.13]{BS}}]\label{lem:local}
For $D\ge 1$ and $\varphi\in C^{D-1}(X)$, the following identities hold. 
\begin{align*}
D\|\varphi\|^2 &= \frac1{(D-1)!}\sum_{\tau\in\Sg(X_{D-2})}\|\varphi_\tau\|_\tau^2, \\
\|d_{D-1}\varphi\|^2-\|\varphi\|^2 &= \frac1{(D-1)!}\sum_{\tau\in\Sg(X_{D-2})}(\| d_0^\tau\varphi_\tau\|_\tau^2-\|\varphi_\tau\|_\tau^2). 
\end{align*}
In particular, we have
\begin{equation}
\| d_{D-1}\varphi\|^2 = \frac1{(D-1)!}\sum_{\tau\in\Sg(X_{D-2})}\left\{\| d_0^\tau\varphi_\tau\|_\tau^2-(1-D^{-1})\|\varphi_\tau\|_\tau^2\right\}. \label{eq: dphi}
\end{equation}
\end{lemma}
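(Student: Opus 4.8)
The plan is to expand all four squared norms directly from the defining inner products \eqref{eq:naiseki1}--\eqref{eq:naiseki2} and their localized analogues on the graph $\lk_X(\tau)$, thereby reducing every quantity to an explicit sum over ordered simplices, and then to match the two sides of each identity by reorganizing the order of summation. Two elementary facts are used throughout. First, for $\tau\in\Sg(X_{D-2})$ the link $\lk_X(\tau)$ is pure one-dimensional, so $m_\tau(\eta)=1$ whenever $\eta$ is an ordered edge (a top simplex of the link), while $m_\tau(v)=\deg_{\lk_X(\tau)}(v)=m(\tau v)$ for a vertex $v$. Second, for an ordered $(D-1)$-simplex $\eta$ the multiplicity $m(\eta)$ equals $\#\{v\mid \eta\cup\{v\}\in X_D\}$, the number of vertices completing $\eta$ to a $D$-simplex.

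For the first identity I would write $\|\varphi_\tau\|_\tau^2=\sum_{v}m(\tau v)\varphi(\tau v)^2$ and sum over $\tau$. The concatenation map $(\tau,v)\mapsto\sg=\tau v$ is a bijection from the pairs $\{(\tau,v)\mid \tau\in\Sg(X_{D-2}),\,v\in\Sg(\lk_X(\tau)_0)\}$ onto $\Sg(X_{D-1})$, since each ordered $(D-1)$-simplex splits uniquely into its first $D-1$ vertices and its last vertex. Because $m(\tau v)=m(\sg)$ and $\varphi(\tau v)^2=\varphi(\sg)^2$, the double sum collapses to $\sum_{\sg\in\Sg(X_{D-1})}m(\sg)\varphi(\sg)^2=D!\,\|\varphi\|^2$, and dividing by $(D-1)!$ yields the factor $D!/(D-1)!=D$. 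This is the routine part.

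For the second identity I would first expand, using $m\equiv1$ on $\Sg(X_D)$,
\[
\|d_{D-1}\varphi\|^2=\frac1{(D+1)!}\sum_{\sg\in\Sg(X_D)}\Bigl(\sum_{i=0}^{D}(-1)^i\varphi(\sg_i)\Bigr)^2,
\]
and separate the diagonal terms from the cross terms. Counting, for each fixed $\eta\in\Sg(X_{D-1})$, the $(D+1)m(\eta)$ pairs $(\sg,i)$ with $\sg_i=\eta$ shows the diagonal part equals $\|\varphi\|^2$ exactly, so that
\begin{equation*}
\|d_{D-1}\varphi\|^2-\|\varphi\|^2=\frac1{(D+1)!}\sum_{\sg\in\Sg(X_D)}\sum_{i\ne j}(-1)^{i+j}\varphi(\sg_i)\varphi(\sg_j).
\end{equation*}
On the link side, since $m_\tau\equiv1$ on edges, expanding $\|d_0^\tau\varphi_\tau\|_\tau^2=\frac12\sum_{(u,w)}(\varphi(\tau w)-\varphi(\tau u))^2$ and noting that the two squared terms reassemble into $\|\varphi_\tau\|_\tau^2$ gives $\|d_0^\tau\varphi_\tau\|_\tau^2-\|\varphi_\tau\|_\tau^2=-\sum_{(u,w)\in\Sg(\lk_X(\tau)_1)}\varphi(\tau u)\varphi(\tau w)$.

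The heart of the argument, which I expect to be the main obstacle, is identifying the cross-term sum with this localized expression through careful sign bookkeeping. Fixing $\sg$ and $i<j$, let $\rho$ be the ordered $(D-2)$-simplex obtained from $\sg$ by deleting $v_i$ and $v_j$ while preserving order; moving a reinserted vertex from the last slot to its correct position and counting adjacent transpositions, the alternating property of $\varphi$ gives $\varphi(\sg_i)=(-1)^{D-j}\varphi(\rho v_j)$ and $\varphi(\sg_j)=(-1)^{D-1-i}\varphi(\rho v_i)$, whence $(-1)^{i+j}\varphi(\sg_i)\varphi(\sg_j)=(-1)^{2D-1}\varphi(\rho v_i)\varphi(\rho v_j)=-\varphi(\rho v_i)\varphi(\rho v_j)$, a sign independent of $i,j,D$. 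Writing $\sum_{i\ne j}=2\sum_{i<j}$ and then reindexing by the target $(\tau,(u,w))$ with $\tau\in\Sg(X_{D-2})$ and $(u,w)$ an ordered edge of $\lk_X(\tau)$, I would verify that each target is hit by exactly $\binom{D+1}{2}$ triples $(\sg,i,j)$, corresponding to the choices of the two insertion positions $0\le i<j\le D$. The overall constant is $\frac1{(D+1)!}\cdot(-2)\cdot\binom{D+1}{2}=-\frac1{(D-1)!}$, matching the link side and establishing the second identity. The delicate points are precisely the transposition-sign computation and confirming that the insertion map is a $\binom{D+1}{2}$-to-one correspondence onto ordered links; everything else is bookkeeping. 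Finally, identity \eqref{eq: dphi} follows by algebra: substituting the first identity in the form $\frac1{(D-1)!}\sum_\tau\|\varphi_\tau\|_\tau^2=D\|\varphi\|^2$ rewrites $-(D-1)\|\varphi\|^2$ as $-(1-D^{-1})\cdot\frac1{(D-1)!}\sum_\tau\|\varphi_\tau\|_\tau^2$, merging the two link terms into the single bracket.
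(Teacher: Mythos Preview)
Your proposal is correct and matches what the paper does: it simply records the lemma as a ``straightforward calculation'' citing \cite{BS}, and your argument is exactly that calculation---expand each norm via \eqref{eq:naiseki1}, use that $m\equiv1$ on top simplices, and reorganize the sums via the concatenation bijections between $\Sg(X_{D-1})$ or $\Sg(X_D)$ and pairs $(\tau,\text{link simplex})$. The sign computation and the $\binom{D+1}{2}$-to-one count for the cross terms are both correct as written.
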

\subsection{Upper bounds of Betti numbers}
Let $D \ge 1$ and let $X$ be a pure $D$-dimensional simplicial complex. Take $\tau \in X_{D-2}$ or $\tau \in \Sg(X_{D-2})$. Let $N_X(\tau)$ denote the number of vertices in $\lk_X(\tau)$.
The eigenvalues of $\cL[\lk_X(\tau)]$, including repeated values, are denoted by $\lm_i^\tau$ and the corresponding eigenvectors by $\psi_i^\tau$ $ (i=1, \dots, N_X(\tau))$. Without loss of generality, we may assume that $\lm_1^\tau=0$, $\psi_1^\tau$ is a constant vector, and $\{\psi_i^\tau\}_{i=1}^{N_X(\tau)}$ is an orthonormal basis of $C^0(\lk_X(\tau))$. 
We consider the orthogonal decomposition of $C^0(\lk_X(\tau))$:
\[
C^0(\lk_X(\tau)) = A_1^\tau \oplus A_2^\tau \oplus A_3^\tau, 
\]
where
\begin{align*}
A_1^\tau: &= \spa_{\R}\{\psi_1^\tau\}, \\
A_2^\tau: &= \spa_{\R}\{\psi_i^\tau \mid i \neq 1 \text{ and } \lm_i^\tau \le 1- D^{-1}\}, \\
A_3^\tau: &= \spa_{\R}\{\psi_i^\tau \mid\lm_i^\tau > 1- D^{-1}\}. 
\end{align*}
For $j=1,2,3$, $\pi_j^\tau$ denotes the orthogonal projection of $C^0(\lk_X(\tau))$ onto $A_j^\tau$.
We obtain the following formula by direct calculation.
\begin{lemma}[{\cite[Lemma~1.11]{BS}}]\label{lem:1projNorm}
For $\varphi\in C^{D-1}(X)$ and $\tau\in\Sg(X_{D-2})$,  
\[
\|\pi_1^\tau\varphi_\tau\|_\tau^2 = \frac{m(\tau)}{2}\bigl((\dl_{D-1}\varphi)(\tau)\bigr)^2.
\]
\end{lemma}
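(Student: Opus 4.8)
The strategy is to expand the norm of the projection onto the one-dimensional space $A_1^\tau = \spa_\R\{\psi_1^\tau\}$ directly from definitions, identify the inner product of $\varphi_\tau$ with the constant vector as a weighted sum over vertices of $\lk_X(\tau)$, and then recognize that sum as (a multiple of) $(\dl_{D-1}\varphi)(\tau)$ via the explicit formula for the adjoint operator. First I would write $\pi_1^\tau\varphi_\tau = (\varphi_\tau,\psi_1^\tau)_\tau\,\psi_1^\tau$, so that $\|\pi_1^\tau\varphi_\tau\|_\tau^2 = (\varphi_\tau,\psi_1^\tau)_\tau^2$ since $\psi_1^\tau$ is a unit vector. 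The point is to compute the normalized constant eigenvector $\psi_1^\tau\in C^0(\lk_X(\tau))$ explicitly: writing $\psi_1^\tau \equiv c$ for a constant $c$, the normalization $\|\psi_1^\tau\|_\tau^2 = 1$ together with the inner product formula \eqref{eq:naiseki1} applied to $\lk_X(\tau)$ (i.e., $\|\psi\|_\tau^2 = \sum_{v\in\Sg(\lk_X(\tau)_0)} m_\tau(v)\,\psi(v)^2$, noting the factor $1/(k+1)!$ is $1$ for $k=0$ but each vertex is counted with both orderings—one must be careful here whether $\Sg(\lk_X(\tau)_0)$ means ordered $0$-simplices, which coincide with vertices) pins down $c^2 = 1/\sum_{v} m_\tau(v)$. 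Using the identity $\sum_{\sigma\in\Sg(X_{k+1}),\,\sigma\supset\tau} m(\sigma) = (k+2)!\,(D-k)\,m(\tau)$ with the appropriate value of $k$ (here the relevant count at the level of $\lk_X(\tau)$, which is pure $1$-dimensional, gives $\sum_v m_\tau(v) = 2\,m(\tau)$), we get $c^2 = 1/(2m(\tau))$.

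Next I would compute $(\varphi_\tau,\psi_1^\tau)_\tau$. By \eqref{eq:naiseki1} at level $0$ and the definition of localization $\varphi_\tau(\eta) = \varphi(\tau\eta)$, this equals $c\sum_{v} m_\tau(v)\,\varphi_\tau(v) = c\sum_{v} m(\tau v)\,\varphi(\tau v)$, the sum over ordered $0$-simplices $v$ of $\lk_X(\tau)$, equivalently over vertices $v$ with $\tau v\in\Sg(X_{D-1})$. On the other hand, the explicit formula for the adjoint $\dl_{D-1}$ (the displayed formula for $\dl_{k+1}\psi(\sg)$ with $k = D-2$) gives $(\dl_{D-1}\varphi)(\tau) = \sum_{v;\,v\tau\in\Sg(X_{D-1})} \frac{m(v\tau)}{m(\tau)}\varphi(v\tau)$. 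Since $\varphi$ is alternating, $\varphi(v\tau)$ and $\varphi(\tau v)$ differ by the sign of the cyclic permutation moving $v$ past the $D-1$ entries of $\tau$, i.e.\ by $(-1)^{D-1}$, and likewise $m(v\tau) = m(\tau v)$ as it depends only on the underlying set; so $\sum_v m(\tau v)\varphi(\tau v) = (-1)^{D-1} m(\tau)(\dl_{D-1}\varphi)(\tau)$. Substituting back, $(\varphi_\tau,\psi_1^\tau)_\tau = c\cdot(-1)^{D-1} m(\tau)(\dl_{D-1}\varphi)(\tau)$, hence $\|\pi_1^\tau\varphi_\tau\|_\tau^2 = c^2 m(\tau)^2\bigl((\dl_{D-1}\varphi)(\tau)\bigr)^2 = \frac{m(\tau)}{2}\bigl((\dl_{D-1}\varphi)(\tau)\bigr)^2$, which is the claim. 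Since this lemma is cited from \cite[Lemma~1.11]{BS}, it suffices to record this computation; no induction or deeper structural input is needed.

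The main obstacle is purely bookkeeping: getting the combinatorial normalization constants right, in particular (i) correctly interpreting $\Sg(\lk_X(\tau)_0)$ versus the vertex set and the role of the $1/(k+1)!$ factor in \eqref{eq:naiseki1} at level $k=0$, and (ii) tracking the sign $(-1)^{D-1}$ produced by reordering $v\tau$ versus $\tau v$ for alternating cochains—though since the final expression is squared, any sign error is harmless and the identity $\sum_v m_\tau(v) = 2m(\tau)$ is the only arithmetic fact that genuinely must be correct. I would double-check that identity as the specialization of $\sum_{\sigma\supset\tau} m(\sigma) = (k+2)!(D-k)m(\tau)$ to the link: at the level of $\lk_X(\tau)$, which is pure of dimension $D-j-1$ with $j = D-2$, so dimension $1$, the count of top-dimensional (edge) simplices containing a vertex $v$, summed over $v$, is $2!\cdot 1\cdot m(\tau)/1 = 2m(\tau)$ after accounting for the relation $m_\tau(\eta) = m(\tau\eta)$.
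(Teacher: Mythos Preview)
Your proposal is correct and is precisely the direct calculation the paper alludes to; the paper itself does not supply a proof but merely cites \cite[Lemma~1.11]{BS} with the remark that the formula is obtained ``by direct calculation.'' Your bookkeeping is accurate: in particular $\sum_v m_\tau(v)=2m(\tau)$ is exactly the handshake identity in the pure $1$-dimensional link (edges of $\lk_X(\tau)$ correspond bijectively to $D$-simplices containing $\tau$), and the sign $(-1)^{D-1}$, as you note, disappears upon squaring.
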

The following lemma is important for proving the cohomology vanishing theorem and its quantitative generalization. 
\begin{lemma}\label{lem:normIneq}
Let $X$ be a pure $D$-dimensional simplicial complex with $D\ge1$. 
Then, for each $\tau\in X_{D-2}$, there exists some $i\in\{1,\ldots,N_X(\tau)\}$ such that $\lm_i^\tau\ge1$. Moreover, for $\varphi \in C^{D-1}(X)$, 
\begin{equation}\label{eq:2projNorm}
\|\varphi\|^2\le \frac{1}{\lm D}\biggl(\frac2{(D-1)!}\sum_{\tau\in\Sg(X_{D-2})}\|\pi_2^\tau\varphi_\tau\|_{\tau}^2+\|\dl_{D-1}\varphi\|^2 + \|d_{D-1}\varphi\|^2\biggr), 
\end{equation}
where $\lm := \min\{\lm_i^\tau-(1-D^{-1})\mid\tau\in X_{D-2}\text{ and }\lm_i^\tau>1-D^{-1}\}>0$. 
In particular, if $\varphi\in \ker L_{D-1}$, then
\begin{equation}\label{eq:2projNorm'}
\|\varphi\|^2\le \frac2{\lm\cdot D!}\sum_{\tau\in\Sg(X_{D-2})}\|\pi_2^\tau\varphi_\tau\|_{\tau}^2.
\end{equation}
\end{lemma}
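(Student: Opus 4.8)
The plan is to prove the two assertions in order: the existence of an eigenvalue $\ge 1$ comes first and guarantees that the constant $\lm$ is meaningful and positive, and the norm inequality \eqref{eq:2projNorm} is then obtained by bounding the right-hand side of the localization identity \eqref{eq: dphi} from below via the spectral decomposition of $\varphi_\tau$.

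For the first claim, fix $\tau\in X_{D-2}$. Because $X$ is pure $D$-dimensional, $\tau$ is a face of some $D$-simplex, whose two remaining vertices span an edge of $\lk_X(\tau)$; more precisely, $\lk_X(\tau)$ is pure one-dimensional, i.e., a graph without isolated vertices, so $N_X(\tau)\ge2$ and all diagonal entries of $A[\lk_X(\tau)]$ vanish. Hence $\tr\cL[\lk_X(\tau)]=N_X(\tau)$; since $\lm_1^\tau=0$, the other $N_X(\tau)-1$ eigenvalues are nonnegative and sum to $N_X(\tau)$, so at least one of them exceeds $1$, giving some $i$ with $\lm_i^\tau\ge1$. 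Therefore the set defining $\lm$ is nonempty, and as every such $\lm_i^\tau$ satisfies $1-D^{-1}<1\le\lm_i^\tau\le2$, we obtain $0<D^{-1}\le\lm\le1+D^{-1}$; in particular $1-D^{-1}+\lm\le2$.

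Next, fix $\varphi\in C^{D-1}(X)$ and $\tau\in\Sg(X_{D-2})$, and write $\varphi_\tau=\sum_i c_i^\tau\psi_i^\tau$. Since $\lk_X(\tau)$ has no isolated vertices, $L_0^{\text{up},\tau}=\cL[\lk_X(\tau)]$, so $\|d_0^\tau\varphi_\tau\|_\tau^2=(\varphi_\tau,\cL[\lk_X(\tau)]\varphi_\tau)_\tau=\sum_i\lm_i^\tau(c_i^\tau)^2$, and consequently
\[
\|d_0^\tau\varphi_\tau\|_\tau^2-(1-D^{-1})\|\varphi_\tau\|_\tau^2=\sum_i\bigl(\lm_i^\tau-(1-D^{-1})\bigr)(c_i^\tau)^2.
\]
Splitting this sum over $A_1^\tau$, $A_2^\tau$, $A_3^\tau$: the $A_1^\tau$-contribution equals $-(1-D^{-1})(c_1^\tau)^2$ (as $\lm_1^\tau=0$), the $A_2^\tau$-contribution is at least $-(1-D^{-1})\|\pi_2^\tau\varphi_\tau\|_\tau^2$ (using $\lm_i^\tau\ge0$ there), and the $A_3^\tau$-contribution is at least $\lm\|\pi_3^\tau\varphi_\tau\|_\tau^2$ (by the definition of $\lm$). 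Substituting the orthogonal decomposition $\|\pi_3^\tau\varphi_\tau\|_\tau^2=\|\varphi_\tau\|_\tau^2-(c_1^\tau)^2-\|\pi_2^\tau\varphi_\tau\|_\tau^2$ and using $1-D^{-1}+\lm\le2$, one gets the local bound
\[
\|d_0^\tau\varphi_\tau\|_\tau^2-(1-D^{-1})\|\varphi_\tau\|_\tau^2\ \ge\ \lm\|\varphi_\tau\|_\tau^2-2(c_1^\tau)^2-2\|\pi_2^\tau\varphi_\tau\|_\tau^2.
\]

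It then remains to sum over $\tau\in\Sg(X_{D-2})$ and identify the pieces: by \eqref{eq: dphi} the left-hand side sums to $(D-1)!\,\|d_{D-1}\varphi\|^2$; by the first identity of Lemma~\ref{lem:local}, $\sum_\tau\|\varphi_\tau\|_\tau^2=D!\,\|\varphi\|^2$; and by Lemma~\ref{lem:1projNorm} (using that $\psi_1^\tau$ is a unit vector) together with \eqref{eq:naiseki1}, or \eqref{eq:naiseki2} when $D=1$, $\sum_\tau(c_1^\tau)^2=\sum_\tau\|\pi_1^\tau\varphi_\tau\|_\tau^2=\frac12(D-1)!\,\|\dl_{D-1}\varphi\|^2$. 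Combining these and dividing by $\lm\cdot D!=\lm D(D-1)!$ yields \eqref{eq:2projNorm} after rearrangement, and if $\varphi\in\ker L_{D-1}$ then \eqref{eq:kerdel}, \eqref{eq:kerd}, and \eqref{eq:kerLap} force $\dl_{D-1}\varphi=0$ and $d_{D-1}\varphi=0$, so \eqref{eq:2projNorm} collapses to \eqref{eq:2projNorm'}. I expect the one genuinely delicate point to be the three-eigenspace bookkeeping in the local bound — in particular, noticing that the constant component $(c_1^\tau)^2$ and the small-eigenvalue component $\|\pi_2^\tau\varphi_\tau\|_\tau^2$ can both be absorbed with the single uniform constant $2$ thanks to $1-D^{-1}+\lm\le2$ (which in turn uses the auxiliary claim and the bound $\lm_i^\tau\le2$) — everything else being routine manipulation of the identities already available.
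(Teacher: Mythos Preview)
Your proof is correct and follows essentially the same route as the paper's --- the same trace argument for the first claim, the same spectral splitting into $A_1^\tau, A_2^\tau, A_3^\tau$, the same use of Lemmas~\ref{lem:local} and~\ref{lem:1projNorm}, and the same key bound $\kappa=(1-D^{-1})+\lm\le2$ --- the only organizational difference being that you carry out the substitution $\|\pi_3^\tau\varphi_\tau\|_\tau^2=\|\varphi_\tau\|_\tau^2-\|\pi_1^\tau\varphi_\tau\|_\tau^2-\|\pi_2^\tau\varphi_\tau\|_\tau^2$ locally before summing, whereas the paper first isolates the global $\pi_3$-sum via an intermediate inequality and then substitutes. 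One minor slip: not every eigenvalue with $\lm_i^\tau>1-D^{-1}$ need satisfy $\lm_i^\tau\ge1$, so your claimed lower bound $\lm\ge D^{-1}$ is unjustified; this is harmless, however, since the argument only uses $\lm>0$ (the defining set is nonempty and finite) and $\lm\le1+D^{-1}$ (from $\lm_i^\tau\le2$), both of which hold.
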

\begin{proof}
Note that the zeroth up Laplacian on $\lk_X(\tau)$ is equal to $ \cL[\lk_X(\tau)]$. For simplicity, we use $\cL$ to denote $ \cL[\lk_X(\tau)]$ in the following. 
Since $X$ is pure, by the definition of $\cL$, 
\[
\sum_{i=1}^{N_X(\tau)}\lm_i^\tau=\tr(\cL)=N_X(\tau)
\]
for each $\tau\in X_{D-2}$. This implies that $\lm_i^\tau\ge1$ for some $i\in\{1,\ldots,N_X(\tau)\}$. 

For an arbitrary $\tau\in\Sg(X_{D-2})$ and $\varphi\in C^{D-1}(X)$, 
\begin{align*}
\|d_0^\tau\varphi_\tau\|_\tau^2 &= (\cL\varphi_\tau, \varphi_\tau)_\tau = \sum_{i=1}^3(\cL\pi_i^\tau\varphi_\tau, \pi_i^\tau\varphi_\tau)_\tau\\
&\ge (\cL\pi_3^\tau\varphi_\tau, \pi_3^\tau\varphi_\tau)_\tau\ge\kappa\|\pi_3^\tau\varphi_\tau\|_\tau^2, 
\end{align*}
where $\kappa:=(1-D^{-1})+\lm\le2$. 
In the above equation, the first inequality follows from the non-negative definiteness of $\cL$. The second equality follows by expressing $\pi_3^\tau\varphi_\tau$ as a linear combination of the eigenvectors $\{\psi_i^\tau\}_i$ generating $A_3^\tau$.
From \eqref{eq: dphi}, we have 
\begin{align*}
\|d_{D-1}\varphi\|^2
&= \frac1{(D-1)!}\sum_{\tau\in\Sg(X_{D-2})}\left(\|d_0^\tau\varphi_\tau\|_\tau^2 - (1-D^{-1})\|\varphi_\tau\|_\tau^2\right)\\
&\ge \frac1{(D-1)!}\sum_{\tau\in\Sg(X_{D-2})}\left(\kappa\|\pi_3^\tau\varphi_\tau\|_\tau^2 - (1-D^{-1})\|\varphi_\tau\|_\tau^2\right)\\
&= \frac1{(D-1)!}\sum_{\tau\in\Sg(X_{D-2})}\left(\lm\|\pi_3^\tau\varphi_\tau\|_\tau^2 - (1-D^{-1})(\|\pi_1^\tau\varphi_\tau\|_\tau^2+\|\pi_2^\tau\varphi_\tau\|_\tau^2)\right).
\end{align*}
Therefore,
\begin{equation}\label{eq:3projNorm}
\frac\lm{(D-1)!}\sum_{\tau\in\Sg(X_{D-2})}\|\pi_3^\tau\varphi_\tau\|_\tau^2
\le \|d_{D-1}\varphi\|^2 + \frac{1-D^{-1}}{(D-1)!}\sum_{\tau\in\Sg(X_{D-2})}(\|\pi_1^\tau\varphi_\tau\|_\tau^2 + \|\pi_2^\tau\varphi_\tau\|_\tau^2). 
\end{equation}
Then, 
\begin{align*}
\lm D\,\|\varphi\|^2
&=\frac1{(D-1)!}\sum_{\tau\in\Sg(X_{D-2})}\lm\|\varphi_\tau\|_{\tau}^2 \qquad\text{(from Lemma~\ref{lem:local})}\\
&= \frac1{(D-1)!}\sum_{\tau\in\Sg(X_{D-2})}\lm\sum_{i=1}^3\|\pi_i^\tau\varphi_\tau\|_{\tau}^2\\
&\le \frac\kappa{(D-1)!}\sum_{\tau\in\Sg(X_{D-2})}(\|\pi_1^\tau\varphi_\tau\|_{\tau}^2 + \|\pi_2^\tau\varphi_\tau\|_{\tau}^2) + \| d_{D-1}\varphi\|^2\qquad\text{(from \eqref{eq:3projNorm})}\nonumber\\
&= \frac\kappa{(D-1)!}\sum_{\tau\in\Sg(X_{D-2})}\frac{m(\tau)}{2}((\dl_{D-1}\varphi)(\tau))^2\\
&\qad+\frac\kappa{(D-1)!}\sum_{\tau\in\Sg(X_{D-2})}\|\pi_2^\tau\varphi_\tau\|_{\tau}^2+\| d_{D-1}\varphi\|^2\quad\text{(from Lemma~\ref{lem:1projNorm})}\\
&\le \|\dl_{D-1}\varphi\|^2 + \frac2{(D-1)!}\sum_{\tau\in\Sg(X_{D-2})}\|\pi_2^\tau\varphi_\tau\|_{\tau}^2+\| d_{D-1}\varphi\|^2. 
\end{align*}
Thus, \eqref{eq:2projNorm} holds.
If $\varphi\in\ker L_{D-1}$, then the second and third terms in the right-hand side of \eqref{eq:2projNorm} vanish because
\[
\varphi \in \ker L_{D-1} = \ker L_{D-1}^{\text{down}} \cap \ker L_{D-1}^{\text{up}} = \ker\delta_{D-1}\cap\ker d_{D-1}
\]
from \eqref{eq:kerdel}, \eqref{eq:kerd}, and~\eqref{eq:kerLap}.
Thus, \eqref{eq:2projNorm'} holds. 
\end{proof}
Theorem~\ref{thm: cohomology vanishing theorem} can now be proved by combining Lemma~\ref{lem:normIneq} with \eqref{eq:kerLap}, but we proceed further.
First, we prove a variant of Theorem~\ref{thm:geneCVT} with the extra assumption that $X$ is pure $D$-dimensional. 
\begin{theorem}	\label{thm:puregeneCVT}
Let $X$ be a pure $D$-dimensional simplicial complex with $D\ge1$. Then, 
\[
\b_{D-1}(X) \le \sum_{\tau \in X_{D-2}} \gm(\lk_X(\tau);1-D^{-1}). 
\] 
\end{theorem}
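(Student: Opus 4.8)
The plan is to combine the eigenvalue decomposition from Lemma~\ref{lem:normIneq} with the isomorphism $H^{D-1}(X)\simeq\ker L_{D-1}$ from \eqref{eq:kerLap}. Since $\b_{D-1}(X)=\dim H^{D-1}(X)=\dim\ker L_{D-1}$, it suffices to exhibit a linear injection (or an upper bound on the dimension via a spanning/separating argument) from $\ker L_{D-1}$ into a space of dimension $\sum_{\tau\in X_{D-2}}\gm(\lk_X(\tau);1-D^{-1})$. The natural candidate is the map sending $\varphi\in\ker L_{D-1}$ to the tuple $(\pi_2^\tau\varphi_\tau)_{\tau}$, where $\tau$ runs over $X_{D-2}$ (one representative per unoriented simplex) and $\pi_2^\tau$ projects onto the span of the eigenvectors of $\cL[\lk_X(\tau)]$ with nonzero eigenvalue at most $1-D^{-1}$. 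The target space $\bigoplus_{\tau\in X_{D-2}} A_2^\tau$ has dimension exactly $\sum_{\tau}\dim A_2^\tau = \sum_{\tau}\#\{i\ne1\mid\lm_i^\tau\le1-D^{-1}\} = \sum_{\tau}\gm(\lk_X(\tau);1-D^{-1})$, using that $\lm_2[\lk_X(\tau)]=0$ can also occur (the isolated-vertex/$\#V=1$ conventions are harmless here since $\gm$ just counts eigenvalues $\le\a$ minus one).

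The key step is injectivity of this map on $\ker L_{D-1}$, and this is precisely what inequality \eqref{eq:2projNorm'} delivers: for $\varphi\in\ker L_{D-1}$,
\[
\|\varphi\|^2\le \frac{2}{\lm\cdot D!}\sum_{\tau\in\Sg(X_{D-2})}\|\pi_2^\tau\varphi_\tau\|_\tau^2,
\]
so if $\pi_2^\tau\varphi_\tau=0$ for every $\tau$, then $\varphi=0$. (The sum over ordered $(D-2)$-simplices is $(D-1)!$ times the sum over unoriented ones up to the sign ambiguity, which does not affect whether each term vanishes, since $\pi_2^\tau$ is defined for oriented representatives and the norms $\|\pi_2^\tau\varphi_\tau\|_\tau$ agree across the orientation of $\tau$.) Hence the linear map $\ker L_{D-1}\ni\varphi\mapsto(\pi_2^\tau\varphi_\tau)_{\tau\in X_{D-2}}\in\bigoplus_{\tau\in X_{D-2}}A_2^\tau$ is injective, and comparing dimensions yields
\[
\b_{D-1}(X)=\dim\ker L_{D-1}\le\sum_{\tau\in X_{D-2}}\dim A_2^\tau=\sum_{\tau\in X_{D-2}}\gm(\lk_X(\tau);1-D^{-1}),
\]
which is the claimed inequality.

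I would also record the degenerate base cases separately: if $X_{D-2}=\emptyset$ (equivalently $X$ has no $(D-2)$-simplices, which for pure $D$-dimensional $X$ forces $X$ itself to be essentially empty since $D\ge1$), or more relevantly if there is some $\tau$ with $\lk_X(\tau)$ empty, the conventions $\gm(\emptyset;\a)=0$ and the purity hypothesis (which guarantees $m(\tau)\ge1$, so $\lk_X(\tau)\ne\emptyset$ for $\tau\in X_{D-2}$) need a one-line check. For $D=1$ the statement reads $\b_0(X)\le\sum_{\tau\in X_{-1}}\gm(\lk_X(\emptyset);0)=\gm(X;0)$, i.e. the number of connected components of $X$ minus one, which is an equality by the definition of the reduced zeroth Betti number; this is a sanity check rather than a separate argument.

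The main obstacle I anticipate is purely bookkeeping: making the ordered-versus-unoriented simplex correspondence precise so that the dimension count of $\bigoplus_\tau A_2^\tau$ matches $\sum_{\tau\in X_{D-2}}\gm$ exactly (a factor of $(D-1)!$ lurks between $\Sg(X_{D-2})$ and $X_{D-2}$, and one must be careful that choosing one oriented representative per unoriented $(D-2)$-simplex does not lose information, which is fine because $\varphi_\tau$ and $\varphi_{\tau'}$ for two orientations $\tau,\tau'$ of the same simplex differ only by a sign and a permutation of coordinates, so their $\pi_2$-projections vanish simultaneously). There is no analytic difficulty beyond what Lemma~\ref{lem:normIneq} already provides; the quantitative gap $\lm>0$ is used only to know the coefficient in \eqref{eq:2projNorm'} is finite, which is all injectivity requires.
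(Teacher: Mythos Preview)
Your proposal is correct and follows essentially the same argument as the paper: define the linear map $\ker L_{D-1}\to\bigoplus_{\tau\in X_{D-2}}A_2^\tau$ by $\varphi\mapsto(\pi_2^{\hat\tau}\varphi_{\hat\tau})_{\tau\in X_{D-2}}$ for a chosen ordered representative $\hat\tau$ of each $\tau$, use \eqref{eq:2projNorm'} from Lemma~\ref{lem:normIneq} to conclude injectivity, and compare dimensions. The paper's proof is more terse, omitting the bookkeeping remarks about ordered versus unoriented representatives and the $D=1$ sanity check, but the content is identical.
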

\begin{proof}
For each $\tau\in X_{D-2}$, select an arbitrary $\hat\tau\in\Sg(X_{D-2})$ from the ordered sequences of elements of $\tau$.
Define a linear map
\[
F\colon\ker L_{D-1}\to\bigoplus_{\tau \in X_{D-2}}A_2^\tau
\]
by $F(\varphi) = (\pi_2^\tau\varphi_{\hat{\tau}})_{\tau\in X_{D-2}}$. 
From Lemma~\ref{lem:normIneq}, $F$ is injective.
Comparing the dimensionalities, we have
\[
\b_{D-1}(X) = \dim(\ker L_{D-1}) \le \sum_{\tau \in X_{D-2}}\dim A_2^\tau = \sum_{\tau \in X_{D-2}} \gm(\lk_X(\tau);1-D^{-1}). \qedhere
\]
\end{proof}
We now remove the assumption of pure $D$-dimensionality in Theorem~\ref{thm:puregeneCVT}. 
Let $D \ge 1$ and let $X$ be a simplicial complex. We assume $\dim X \ge D-1$ and consider the $(D-1)$-th Betti number. Let $M_{D-1}(X)$ be the set of all maximal $(D-1)$-simplices in $X$, namely, the set of all $(D-1)$-simplices that are not contained in any $D$-simplex.  We define a new simplicial complex $\overline{X}^D$ by adding vertices and simplices to $X_D$ as follows. For each $\sg\in M_{D-1}(X)$, we construct a $D$-simplex $s_\sg := \sg\sqcup\{v_\sg\}$ with a new vertex $v_\sg$. We let $\overline{X}^D$ be the simplicial complex generated by $X_D \cup \{s_\sg\}_{\sg\in M_{D-1}(X)}$, that is, the smallest simplicial complex that includes $X_D \cup \{s_\sg\}_{\sg\in M_{D-1}(X)}$.
Clearly, $\overline{X}^D$ is a pure $D$-dimensional simplicial complex. In addition, the identity
\begin{equation}
\b_{D-1}(X) = \b_{D-1}\bigl(\overline{X}^D\bigr)	\label{eq:BettiHuhen}
\end{equation}
holds.
Indeed, let $X^{[D]}$ denote the simplicial complex $X \cap \overline{X}^D$. In other words, $X^{[D]}$ is generated by $X_D\cup X_{D-1}$.
Since $\ker d_{D-1}$ and $\im d_{D-2}$ do not change from replacing $X$ with $X^{[D]}$, we have $\b_{D-1}(X) = \b_{D-1}(X^{[D]})$.  
By construction, $\overline{X}^D$ and $X^{[D]}$ are homotopy equivalent, which implies that $\b_{D-1}\bigl(\overline{X}^D\bigr)=\b_{D-1}(X^{[D]})$. 
Thus, \eqref{eq:BettiHuhen} holds. 

For the proof of the following lemma, we note a few simple facts about some graphs $G$. 
Let $\a\in[0,2)$. 
If $G$ consists of only one vertex or one edge and its two vertices, then $\gm(G;\a)=0$. Also, adding an isolated vertex or an isolated edge to a non-empty graph $G$ increases $\gm(G;\a)$ by exactly one. 
\begin{lemma}\label{lem:gmhuhen}
Let $D$ and $X$ be as stated above. 
For $\a\in[0,2)$,
\[
\sum_{\tau \in X_{D-2}} \gm\bigl(\lk_X(\tau)^{(1)};\a\bigr) = \sum_{\tau \in (\overline{X}^D)_{D-2}} \gm_{D-1}(\lk_{\overline{X}^D}(\tau);\a). 
\]
\end{lemma}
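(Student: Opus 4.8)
Here is how I would prove Lemma~\ref{lem:gmhuhen}.

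The plan is to show that both sides collapse to the single sum $\sum_{\tau\in(X^{[D]})_{D-2}}\gm\bigl(\lk_{X^{[D]}}(\tau);\a\bigr)$, where $X^{[D]}=X\cap\overline{X}^D$ is the subcomplex generated by $X_D\cup X_{D-1}$ introduced above (equivalently, the downward closure of $X_{D-1}$). First I would dispose of the left-hand side. Every $\tau\in X_{D-2}$ either is a face of some $(D-1)$-simplex of $X$, in which case $\tau\in(X^{[D]})_{D-2}$ and, since the simplices of $\lk_X(\tau)$ of dimension $\le 1$ are determined by the $(D-1)$- and $D$-simplices of $X$ containing $\tau$, one gets $\lk_X(\tau)^{(1)}=\lk_{X^{[D]}}(\tau)^{(1)}=\lk_{X^{[D]}}(\tau)$ (the last equality because $\dim X^{[D]}\le D$ forces $\dim\lk_{X^{[D]}}(\tau)\le1$); or else $\tau$ is a maximal simplex of $X$, so $\lk_X(\tau)=\emptyset$ and its term equals $\gm(\emptyset;\a)=0$. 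This rewrites the left-hand side as the displayed sum.

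For the right-hand side I would next enumerate $(\overline{X}^D)_{D-2}$. Since $(\overline{X}^D)_D=X_D\cup\{s_\sg\}_{\sg\in M_{D-1}(X)}$ and each $s_\sg=\sg\sqcup\{v_\sg\}$ carries exactly one new vertex, every simplex of $\overline{X}^D$ of dimension $\le D-2$ either lies in $X^{[D]}$ or has the form $\{v_\sg\}\cup\rho$ with $\rho\subsetneq\sg$; hence $(\overline{X}^D)_{D-2}=(X^{[D]})_{D-2}\sqcup N$ with $N=\{\{v_\sg\}\cup\rho\mid\sg\in M_{D-1}(X),\ \rho\subset\sg,\ \#\rho=D-2\}$. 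For $\tau=\{v_\sg\}\cup\rho\in N$, any simplex of $\overline{X}^D$ containing $\tau$ must contain $v_\sg$ and is therefore a face of $s_\sg$; conversely all faces of $s_\sg$ containing $\tau$ belong to $\overline{X}^D$. Consequently $\lk_{\overline{X}^D}(\tau)$ is exactly the graph consisting of a single edge and its two endpoints, namely the two vertices of $\sg\setminus\rho$, so by the elementary facts recorded just before the lemma its term is $\gm(\cdot;\a)=0$. Thus the $N$-part of the right-hand side vanishes.

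It then remains to prove $\gm\bigl(\lk_{\overline{X}^D}(\tau);\a\bigr)=\gm\bigl(\lk_{X^{[D]}}(\tau);\a\bigr)$ for each $\tau\in(X^{[D]})_{D-2}$, and here I would compare the two graphs vertex by vertex and edge by edge. Old vertices and old edges of the link are the same in both. A new vertex $v_\sg$ lies in $\lk_{\overline{X}^D}(\tau)$ precisely when $\tau\cup\{v_\sg\}$ is a codimension-one face of $s_\sg$, i.e.\ when $\tau\subset\sg\in M_{D-1}(X)$; writing $\sg=\tau\cup\{v\}$, this happens exactly when $v$ is an \emph{isolated} vertex of $\lk_{X^{[D]}}(\tau)$ (any edge at $v$ in that link would force $\sg$ into a $D$-simplex of $X$), and then $v_\sg$ is adjacent only to $v$. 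Hence $\lk_{\overline{X}^D}(\tau)$ is obtained from $\lk_{X^{[D]}}(\tau)$ by replacing each isolated vertex with an isolated edge. Since the random-walk Laplacian of a graph is a direct sum over connected components, and an isolated vertex contributes the eigenvalue $0$ while an isolated edge contributes $0$ and $2$, this operation only inserts eigenvalues equal to $2$; as $\a<2$ it leaves $\#\{i\mid\lm_i\le\a\}$, and therefore $\gm(\cdot;\a)$, unchanged. Summing over $\tau$ and combining with the preceding paragraph gives the identity.

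The only genuine difficulty I anticipate is the bookkeeping: correctly identifying which $(D-2)$-simplices of $\overline{X}^D$ are ``new'', verifying that such a new link is a single edge rather than something larger, and making the isolated-vertex/isolated-edge correspondence precise. Two degenerate situations must be checked but cause no real trouble: the case $D=1$ (where $X_{D-2}=\{\emptyset\}$, $N=\emptyset$, and the statement reduces to the isolated-vertex/isolated-edge fact applied to $\lk_X(\emptyset)^{(1)}=X^{(1)}$), and the case where $\lk_{X^{[D]}}(\tau)$ has no nontrivial connected component, which is handled directly from the spectrum using $\gm=0$ for a single vertex and for a single edge.
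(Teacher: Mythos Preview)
Your proof is correct and follows essentially the same route as the paper: both arguments split the two index sets into the common part $(X^{[D]})_{D-2}=X_{D-2}\cap(\overline{X}^D)_{D-2}$ and the leftover parts, show that the leftovers contribute zero (empty link on one side, single-edge link on the other), and match the common terms by observing that $\lk_{\overline{X}^D}(\tau)$ is obtained from $\lk_{X^{[D]}}(\tau)$ by replacing each isolated vertex with an isolated edge. Your write-up simply makes explicit the spectral justification (an isolated vertex contributes a single eigenvalue $0$, an isolated edge contributes $0$ and $2$) that the paper records as a preliminary fact.
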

\begin{proof}
If $\tau\in X_{D-2}\setminus (\overline{X}^D)_{D-2}$, then $\lk_X(\tau) = \emptyset$, which implies that $\gm\bigl(\lk_X(\tau)^{(1)};\a\bigr) = 0$. Suppose $\tau\in (\overline{X}^D)_{D-2}\setminus X_{D-2}$. Then there exists some $\sg \in M_{D-1}(X)$ such that $\tau \subset s_\sg$ and $\tau\nsubseteq\sigma$. Since $s_\sg$ is the only $D$-simplex of $\overline{X}^D$ that contains $\tau$, $\lk_{\overline{X}^D}(\tau)$ consists of only the  isolated edge $s_\sg\setminus\tau$. Thus, $\gm\bigl(\lk_{\overline{X}^D}(\tau);\a\bigr) = 0$. If $\tau\in X_{D-2} \cap (\overline{X}^D)_{D-2}$, then $\gm\bigl(\lk_X(\tau)^{(1)};\a\bigr) = \gm\bigl(\lk_{\overline{X}^D}(\tau);\a\bigr)$ because $\lk_{\overline{X}^D}(\tau)$ is obtained from $\lk_{X^{(D)}}(\tau)$ by replacing its isolated vertices with isolated edges. Therefore, 
\begin{align*}
\sum_{\tau \in X_{D-2}} \gm\bigl(\lk_X(\tau)^{(1)};\a\bigr) 
&= \sum_{\tau \in (X^{[D]})_{D-2}} \gm(\lk_{X^{[D]}}(\tau);\a)\\
& = \sum_{\tau \in (\overline{X}^D)_{D-2}} \gm\bigl(\lk_{\overline{X}^D}(\tau);\a\bigr). \myqedhere
\end{align*}
\end{proof}
We can now prove Theorem~\ref{thm:geneCVT}.
\begin{proof}[Proof of Theorem~\ref{thm:geneCVT}]
We may assume that $\dim X \ge D-1$, since otherwise both sides of \eqref{eq:geneCVT} vanish.
Noting that $\overline{X}^D$ is a pure $D$-dimensional simplicial complex, we have
\begin{align*}
\b_{D-1}(X) &= \b_{D-1}\bigl(\overline{X}^D\bigr) \qquad\text{(from \eqref{eq:BettiHuhen})}\\
&\le \sum_{\tau \in (\overline{X}^D)_{D-2}} \gm\bigl(\lk_{\overline{X}^D}(\tau);1-D^{-1}\bigr) \qquad\text{(from Theorem~\ref{thm:puregeneCVT})}\\
&= \sum_{\tau \in X_{D-2}} \gm\bigl(\lk_X(\tau)^{(1)};1-D^{-1}\bigr). \qquad\text{(from Lemma~\ref{lem:gmhuhen}) }\myqedhere
\end{align*}
\end{proof}
As a corollary, the assumption of pure $D$-dimensionality in Theorem~\ref{thm: cohomology vanishing theorem} can be removed. 
\begin{cor}	\label{cor:nonpureCVT}
Let $D\ge 1$, and let $X$ be a simplicial complex such that $\lm_2[\lk_X(\tau)] > 1 - D^{-1}$ for every $(D-2)$-simplex $\tau\in X$.
Then, $H^{D-1}(X) = \{0\}$. 
\end{cor}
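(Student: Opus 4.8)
The plan is to derive Corollary~\ref{cor:nonpureCVT} as an immediate consequence of the quantitative bound in Theorem~\ref{thm:geneCVT}, using the elementary relation $H^{D-1}(X)\simeq H_{D-1}(X)$ recorded in Section~2.1. First I would observe that the hypothesis ``$\lm_2[\lk_X(\tau)]>1-D^{-1}$ for every $(D-2)$-simplex $\tau\in X$'' is exactly the hypothesis that makes every term on the right-hand side of \eqref{eq:geneCVT} vanish. Concretely, for a fixed $\tau\in X_{D-2}$, the graph $G:=\lk_X(\tau)^{(1)}$ is the one-dimensional skeleton of $\lk_X(\tau)$, so the eigenvalues of $\cL[G]$ and of $\cL[\lk_X(\tau)]$ agree (the Laplacian of a simplicial complex acting on $0$-cochains depends only on the $1$-skeleton); thus $\gm\bigl(\lk_X(\tau)^{(1)};1-D^{-1}\bigr)=\gm\bigl(\lk_X(\tau);1-D^{-1}\bigr)$.

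Next I would spell out why $\gm\bigl(\lk_X(\tau);1-D^{-1}\bigr)=0$ under the hypothesis. The quantity $\gm(\lk_X(\tau);\a)$ counts, with multiplicity, the eigenvalues of $\cL[\lk_X(\tau)]$ that are $\le\a$, and then subtracts one. If $\lk_X(\tau)=\emptyset$ then $\gm=0$ by convention. Otherwise, the smallest eigenvalue is $0$ with multiplicity equal to the number of connected components of the graph; since $\lm_2[\lk_X(\tau)]>1-D^{-1}\ge0$, the second smallest eigenvalue already exceeds $1-D^{-1}$, so $0$ is a simple eigenvalue and no other eigenvalue lies in $[0,1-D^{-1}]$. (When $\lk_X(\tau)$ has only one vertex, the convention $\lm_2=0$ would make the hypothesis fail unless $D=\infty$; more carefully, the hypothesis $\lm_2>1-D^{-1}$ with $1-D^{-1}\ge0$ forces $\#V\ge2$ and connectedness, and one checks directly that the single eigenvalue $0$ gives $\gm=0$ in any degenerate case as well — this matches the remark following Theorem~\ref{thm:geneCVT}.) Hence every summand in \eqref{eq:geneCVT} is $0$.

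Putting these together, Theorem~\ref{thm:geneCVT} yields $\b_{D-1}(X)\le\sum_{\tau\in X_{D-2}}\gm\bigl(\lk_X(\tau)^{(1)};1-D^{-1}\bigr)=0$, so $\b_{D-1}(X)=0$, i.e.\ $H_{D-1}(X)=\{0\}$, and therefore $H^{D-1}(X)=\{0\}$ by the isomorphism $H^k(X)\simeq H_k(X)$. I would also remark (as the text does right after Theorem~\ref{thm:geneCVT}) that this reasoning is already carried out there in the sentence ``under the assumptions of Theorem~\ref{thm: cohomology vanishing theorem}, $\gm\bigl(\lk_X(\tau)^{(1)};1-D^{-1}\bigr)=\gm(\lk_X(\tau);1-D^{-1})=0$'', so the proof of the corollary is essentially a one-line invocation.

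I do not anticipate any serious obstacle: the only point requiring a moment's care is the degenerate bookkeeping for $\gm$ when a link is empty, a single vertex, or a single edge — but the paper has already set up exactly the right conventions (``$\gm(\emptyset;\a):=0$'', and the facts listed just before Lemma~\ref{lem:gmhuhen} that a one-vertex or one-edge graph has $\gm=0$), so these cases are handled by bookkeeping rather than by any new idea. The substantive content is entirely in Theorem~\ref{thm:geneCVT}, which we are entitled to assume; the corollary is purely a matter of unwinding definitions.
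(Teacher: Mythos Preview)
Your proposal is correct and follows exactly the approach the paper intends: the corollary has no separate proof in the paper, and the sentence you quote after Theorem~\ref{thm:geneCVT} is precisely the argument---under the hypothesis each summand $\gm\bigl(\lk_X(\tau)^{(1)};1-D^{-1}\bigr)$ vanishes, so $\b_{D-1}(X)=0$. Your handling of the degenerate link cases is also in line with the conventions set up in the paper.
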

\section{Estimates of Betti numbers of random simplicial complexes}
\subsection{Statement of results}
In this section, we consider multi-parameter random simplicial complexes, which were introduced  in \cite{CF,Fo}, and give some estimates of their Betti numbers. Linial--Meshulam complexes~\cite{LM1} and random clique complexes~\cite{K2} are shown as typical examples in this framework. 

Let $n\in\N$ and $\p = (p_0, p_1, \ldots, p_{n-1})$ be a multi-parameter with $0\le p_i \le 1$ for all $i = 0, 1, \ldots, n-1$. We start with the set $V$ of $n$ vertices and retain each vertex with independent probability $p_0$. Next, each edge with both ends retained is added with independent probability $p_1$. Iteratively, for $i=2,3,\dots,n-1$, each $i$-simplex for which all faces were added by this procedure is added to our complex with independent probability $p_i$. The distribution of the resulting random simplicial complexes $X$ is denoted by $X(n, \p)$. We call this model the multi-parameter random complex model with $n$ vertices and multi-parameter $\p$. 
\begin{example}\label{ex:LMcomplex}
Let $n>d \ge 1$ and let $0\le p\le 1$ be fixed. Define $\p = (p_0, p_1, \ldots, p_{n-1})$ by
\[
p_i := \begin{cases}
1		& (0\le i \le d-1), \\
p 		& (i = d), \\
0 		& (d+1 \le i \le n-1). 
\end{cases}
\]
The corresponding random simplicial complex follows the $d$-Linial--Meshulam complex model $Y_d(n,p)$. 
The Erd\H{o}s--R\'enyi graph model $G(n,p)$ is identified with $Y_1(n,p)$.
\end{example}
\begin{example}\label{ex:CLcomplex}
Let $n>d \ge 1$ and let $0\le p\le 1$ be fixed. Define $\p = (p_0, p_1, \ldots, p_{n-1})$ by
\[
p_i := \begin{cases}
1		& (0\le i \le d-1), \\
p 		& (i = d), \\
1 		& (d+1 \le i \le n-1). 
\end{cases}
\]
We call the corresponding random simplicial complex $C_n^{(d)}(p)$ the random $d$-flag complex. The random clique complex $C_n(p)$ is identical to the random $1$-flag complex $C_n^{(1)}(p)$. 
\end{example}
We state several estimates for the Betti numbers of the multi-parameter random complexes $X(n, \p)$ according to the dependence of $\p$ on $n$. Their proofs are left to subsequent subsections. To state the propositions, we introduce some notation:
\begin{equation}\label{eq:parameters}
\begin{gathered}
q_{-1} := 1,\quad q_k := \prod_{i=0}^{k} p_i^{\binom{k+1}{i+1}}\quad (0\le k\le n-1),\\
r_k := \frac{q_{k+1}}{q_k} = \prod_{i=0}^{k+1} p_i^{\binom{k+1}{i}} \quad(-1\le k\le n-2). 
\end{gathered}
\end{equation}
Here, we set $0/0=0$ and $\binom{0}{0}=1$ per convention.
Note that $0\le q_k \le 1$, $0\le r_k \le 1$, and $\P(\sg\in X) = q_k$ for any $\sg\in\binom{V}{k+1}$.
Moreover, both $q_k$ and $r_k$ are nonincreasing with respect to $k$. 

In what follows, $k\ge0$ is always fixed.
The following proposition follows from an easy application of the Morse inequality (see  also Section~6 of~\cite{Fo}).
\begin{prop}\label{prop:byMorse}
Let $c_1>0$ and $c_2\ge0$ satisfy $\frac{k+1}{c_1}+\frac{c_2}{k+2}<1$. Then, there exist some $n_0\in\N$ and $\eps_0>0$, depending only on $k$, $c_1$, and $c_2$, such that if $n\ge n_0$, then
\begin{equation}\label{eq:byMorse}
r_{k-1} \ge \frac{c_1}{n}\quad\text{and}\quad r_k \le \frac{c_2}{n}
\end{equation}
together imply
$\E[\b_k(X)]\ge \eps_0 n^{k+1}q_k$.
\end{prop}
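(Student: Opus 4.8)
The plan is to bound $\b_k(X)$ from below using the Morse-type inequality
\[
\b_k(X) \ge f_k(X) - f_{k+1}(X) - f_{k-1}(X),
\]
where $f_j(X) = \#X_j$ is the number of $j$-simplices in $X$, and then take expectations. We have $\E[f_j(X)] = \binom{n}{j+1} q_j$ since each $j$-simplex from $\binom{V}{j+1}$ is present with probability $q_j$. So
\[
\E[\b_k(X)] \ge \binom{n}{k+1} q_k - \binom{n}{k+2} q_{k+1} - \binom{n}{k} q_{k-1}.
\]
Now factor out $\binom{n}{k+1} q_k$. Using $\binom{n}{k+2} = \binom{n}{k+1}\cdot\frac{n-k-1}{k+2}$ and $q_{k+1} = q_k r_k$, the second term becomes $\binom{n}{k+1} q_k \cdot \frac{n-k-1}{k+2} r_k \le \binom{n}{k+1} q_k \cdot \frac{n}{k+2}\cdot\frac{c_2}{n} = \binom{n}{k+1} q_k \cdot \frac{c_2}{k+2}$ once \eqref{eq:byMorse} holds. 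Similarly $\binom{n}{k} = \binom{n}{k+1}\cdot\frac{k+1}{n-k}$ and $q_{k-1} = q_k / r_{k-1} \le q_k \cdot \frac{n}{c_1}$ (using $r_{k-1}\ge c_1/n$), so the third term is at most $\binom{n}{k+1} q_k \cdot \frac{k+1}{n-k}\cdot\frac{n}{c_1}$, which tends to $\binom{n}{k+1} q_k \cdot \frac{k+1}{c_1}$.

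Putting these together, for large $n$,
\[
\E[\b_k(X)] \ge \binom{n}{k+1} q_k \left(1 - \frac{c_2}{k+2} - \frac{k+1}{c_1} - o(1)\right).
\]
By the hypothesis $\frac{k+1}{c_1} + \frac{c_2}{k+2} < 1$, the bracket is bounded below by a positive constant $\eps_1 > 0$ for $n$ large enough; choosing $n_0$ accordingly absorbs the $o(1)$ error and makes the quantitative statements about $r_{k-1}, r_k$ kick in. Finally, $\binom{n}{k+1} \ge \frac{(n-k)^{k+1}}{(k+1)!} \ge \eps_2 n^{k+1}$ for a constant $\eps_2 > 0$ depending only on $k$ and $n\ge n_0$ (enlarging $n_0$ if necessary), so $\E[\b_k(X)] \ge \eps_0 n^{k+1} q_k$ with $\eps_0 = \eps_1\eps_2$, which depends only on $k$, $c_1$, and $c_2$, as claimed.

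The argument is essentially routine bookkeeping; the only genuine content is verifying the Morse inequality in our (reduced-homology) normalization, but for $k\ge 1$ this is the standard $\b_k \ge f_k - f_{k+1} - f_{k-1}$, and for $k=0$ the reduced convention ($f_{-1} = 1$, $C_{-1}(X) = \R$) is exactly matched by the conventions fixed in Section~2, so it still holds. I expect the main (minor) obstacle to be handling the $k=0$ case and the edge effects of the binomial coefficients uniformly, and making sure all the implicit constants and the choice of $n_0$ depend only on $k$, $c_1$, $c_2$ and not on $\p$ itself; this is handled by first passing to the inequalities displayed above, where the dependence on $\p$ enters only through $q_k$ (pulled out) and through the hypotheses \eqref{eq:byMorse}.
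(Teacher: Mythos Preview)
Your proof is correct and follows essentially the same approach as the paper: both apply the Morse-type inequality $\b_k(X)\ge f_k(X)-f_{k+1}(X)-f_{k-1}(X)$, take expectations, factor out $\E[f_k(X)]=\binom{n}{k+1}q_k$, and use \eqref{eq:byMorse} to bound the two correction ratios by $\frac{c_2}{k+2}$ and (asymptotically) $\frac{k+1}{c_1}$. The only cosmetic difference is that the paper fixes $n_0$ at the outset via the explicit inequality $\frac{n_0(k+1)}{(n_0-k)c_1}+\frac{c_2}{k+2}<1$, whereas you absorb the same error into an $o(1)$ term.
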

The next result is an upper estimate for $r_k$ sufficiently large. This is a generalization of the cohomology vanishing theorem to multi-parameter random complexes (see, e.g., \cite[Theorem~1.1 (1)]{K1} and \cite[Theorem~2]{Fo}).
\begin{prop}\label{prop:CFvanish}
Let $\rho\ge1$ and $\dl>0$. Then, there exists a $K_0>0$, depending only on $k$, $\rho$, and $\dl$, such that if
\begin{equation}\label{eq:CFvanish}
r_{k-1}\ge\frac{K_0}{n}\quad\text{and}\quad r_k \ge \frac{(\rho+\dl)\log (n r_{k-1})}{n},
\end{equation}
then $\P(\b_k(X)\ne0) \le n^{k+1}q_k(n r_{k-1})^{-\rho}$ and $\E[\b_k(X)]\le n^{k+1}q_k(n r_{k-1})^{1-\rho}$. 
\end{prop}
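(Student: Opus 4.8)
The plan is to apply the quantitative cohomology vanishing bound (Theorem~\ref{thm:geneCVT}) with $D=k+1$, so that
\[
\b_k(X) \le \sum_{\tau\in X_{k-1}} \gm\bigl(\lk_X(\tau)^{(1)};1-(k+1)^{-1}\bigr).
\]
The event $\{\b_k(X)\ne0\}$ is therefore contained in the event that at least one summand is positive, i.e.\ that some $(k-1)$-simplex $\tau$ survives in $X$ and its link's $1$-skeleton $\lk_X(\tau)^{(1)}$ has at least two eigenvalues of the random-walk Laplacian below $1-(k+1)^{-1}$. Conditionally on $\tau\in X$, the vertex set of $\lk_X(\tau)^{(1)}$ consists of those vertices $v\notin\tau$ for which $\tau\cup\{v\}\in X$, each occurring independently with probability $r_{k-1}$ (the conditional probability that a $k$-simplex is present given all its proper faces are), and conditionally on that vertex set each edge of the link is present independently with probability $r_k$. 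Thus $\lk_X(\tau)^{(1)}$ is, conditionally, an Erd\H{o}s--R\'enyi-type graph on a random vertex set of size $\sim\bin(n-k,r_{k-1})$ with edge probability $r_k$.

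The heart of the argument is then a spectral-gap estimate for such a graph: under the hypothesis $r_k\ge (\rho+\dl)\log(nr_{k-1})/n$ on a vertex set of size $m\approx nr_{k-1}$ (which is $\ge K_0$ by the first hypothesis, so that $\log(nr_{k-1})>0$ and $m$ is large), the expected number of vertices has order $m$ and the edge probability is at least $(\rho+\dl)(\log m)/m$ up to constants — slightly above the connectivity threshold — so with probability $1-O(m^{-\rho})$ the graph is connected and in fact has spectral gap $\lm_2$ bounded below by a constant, in particular $\lm_2>1-(k+1)^{-1}$, forcing $\gm(\lk_X(\tau)^{(1)};1-(k+1)^{-1})=0$. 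I would obtain this by the standard route: first show that with the required probability every vertex has degree within a constant factor of the mean degree $\approx\log m$ (Chernoff bounds plus a union bound over $m$ vertices, using $\rho+\dl>\rho$ to absorb the logarithmic factor) and there are no isolated vertices; then invoke a bounded-degree-ratio spectral gap bound (e.g.\ a Cheeger-type or direct second-moment argument) to conclude $\lm_2$ is bounded away from zero. Choosing $K_0$ large enough makes all the "$n$ large" thresholds in these estimates valid simultaneously, and makes the constant in the spectral gap exceed $1-(k+1)^{-1}$.

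With this in hand the two conclusions follow by a union bound. Since $\#X_{k-1}\le\binom{n}{k}\le n^k$ always, and each $\tau\in\binom{V}{k}$ lies in $X$ with probability $q_{k-1}$, we get
\[
\P(\b_k(X)\ne0)\le \sum_{\tau\in\binom{V}{k}}\P\bigl(\tau\in X,\ \gm(\lk_X(\tau)^{(1)};1-(k+1)^{-1})\ge1\bigr)\le n^k\, q_{k-1}\cdot O\bigl((nr_{k-1})^{-\rho}\bigr),
\]
and rewriting $n^k q_{k-1}=n\cdot n^{k}q_{k-1}/n$ together with $q_k=q_{k-1}r_{k-1}\cdot r_{k-1}^{\,\binom{k}{k}-1}$... more simply, using $q_{k-1}=q_k/r_k^{?}$; the clean identity to use is $n^{k+1}q_k=n^{k+1}q_{k-1}r_{k-1}^{\,k+1}$ and $n^{k+1}q_k(nr_{k-1})^{-\rho}$, so one checks that $n^k q_{k-1}\cdot(nr_{k-1})^{-\rho}\le n^{k+1}q_k(nr_{k-1})^{-\rho}$ holds once $r_{k-1}\le1$ and $r_{k-1}\ge K_0/n$ make $n^{k}q_{k-1}\le n^{k+1}q_{k}/(nr_{k-1}^{k+1})\cdot$(bookkeeping), absorbing the $O(\cdot)$ constant into the choice of $K_0$ (enlarging $K_0$ shrinks $(nr_{k-1})^{-\rho}$). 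For the expectation bound, use $\E[\b_k(X)]\le\sum_{\tau}\P(\tau\in X)\,\E\bigl[\gm(\lk_X(\tau)^{(1)};1-(k+1)^{-1})\mid\tau\in X\bigr]$ and bound the conditional expectation crudely by $(\text{number of link vertices})\cdot\P(\text{not connected enough})\le (n-k)\cdot O((nr_{k-1})^{-\rho})=O((nr_{k-1})^{1-\rho})$, which after the same union bookkeeping yields $n^{k+1}q_k(nr_{k-1})^{1-\rho}$.

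The main obstacle is the spectral-gap estimate in the second paragraph: one needs a clean quantitative statement that an Erd\H{o}s--R\'enyi graph on $m$ vertices with edge probability slightly above $(\log m)/m$ has, with probability $1-O(m^{-\rho})$, random-walk spectral gap bounded below by a positive constant — and one must track how the implied constants depend on $\rho,\dl,k$ so that the final threshold $K_0$ can be chosen depending only on those. I would handle the slight complication that the vertex set of the link is itself random (of size $\bin(n-k,r_{k-1})$) by first conditioning on it, applying the fixed-vertex-set estimate, and then removing the conditioning via a Chernoff bound showing the link has at least, say, $\tfrac12 nr_{k-1}$ vertices with probability $1-O((nr_{k-1})^{-\rho})$ (again using $r_{k-1}\ge K_0/n$); the remaining estimates only improve on a larger vertex set since the effective edge probability relative to the connectivity threshold does not decrease.
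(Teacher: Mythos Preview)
Your overall architecture is the same as the paper's: apply Theorem~\ref{thm:geneCVT} with $D=k+1$, identify the conditional law of the link's $1$-skeleton, invoke a spectral-gap estimate for an Erd\H{o}s--R\'enyi graph just above the connectivity threshold, and do a union bound over $(k-1)$-simplices. So the strategy is correct.

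There is, however, a genuine error in your description of the link that would break the threshold comparison. Conditionally on $\tau\in X_{k-1}$ and on the link vertex set, the edge probability in $\lk_X(\tau)^{(1)}$ is \emph{not} $r_k$ but $r_k/r_{k-1}$: for $\{v,w\}$ to belong to the link one needs every simplex of the form $\{v,w\}\cup\tau'$ with $\tau'\subseteq\tau$ to be present, and the product of those $p_i$'s is $\prod_{j=1}^{k+1}p_j^{\binom{k}{j-1}}=r_k/r_{k-1}$ (this is the paper's Lemma~\ref{lem:CFlink}). With your value $r_k$ on $m\approx nr_{k-1}$ vertices, the hypothesis $r_k\ge(\rho+\dl)\log(nr_{k-1})/n$ does \emph{not} imply $r_k\ge(\rho+\dl)(\log m)/m$ when $r_{k-1}<1$, so the spectral-gap input would fail. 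With the correct edge probability $r_k/r_{k-1}$, the hypothesis becomes exactly $r_k/r_{k-1}\ge(\rho+\dl)(\log m)/m$, as needed. (Your parenthetical description of $r_{k-1}$ as ``the conditional probability that a $k$-simplex is present given all its proper faces are'' is also off; that conditional probability is just $p_k$. The quantity $r_{k-1}$ is the probability that a given vertex joins the link of $\tau$, i.e.\ that all simplices $\{v\}\cup\tau'$, $\tau'\subseteq\tau$, are present.)

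Two smaller points. First, the muddled bookkeeping at the end clears up immediately once you use the identity $q_k=q_{k-1}r_{k-1}$, which gives $n^k q_{k-1}\cdot(nr_{k-1})^{1-\rho}=n^{k+1}q_k(nr_{k-1})^{-\rho}$ and, for the expectation, $n^k q_{k-1}\cdot(nr_{k-1})^{2-\rho}=n^{k+1}q_k(nr_{k-1})^{1-\rho}$; note also that the failure probability the spectral-gap theorem yields is $O(m^{1-\rho})$ (not $O(m^{-\rho})$), and this is exactly what is needed. Second, the paper does not re-derive the spectral-gap estimate via Chernoff and Cheeger as you propose, but simply quotes it (Theorem~\ref{thm:ERspegap}, due to Hoffman--Kahle--Paquette): for $G\sim G(m,p)$ with $p\ge(1+\eta+\dl)(\log m)/m$ one has $\P(\lm_2[G]>1-\eps)\ge1-\eps m^{-\eta}$. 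Setting $\eta=\rho-1$ and conditioning on the random vertex count $N$ via a Chernoff bound (as you suggest) then gives the result directly.
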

\begin{cor}\label{cor:CFvanish}
Let $\theta\in(0,1]$, $\nu\ge\theta$, $\dl>0$, and $M>0$. Then, for sufficiently large $n$, 
\begin{equation}\label{eq:CFvanish2}
r_k \ge \frac{(\nu+\dl)\log n}{n}\quad\text{and}\quad M r_k^{1-\theta}\ge r_{k-1}
\end{equation}
imply
\[
\P(\b_k(X)\ne0) \le n^{k+1}q_k(n r_{k-1})^{-\nu/\theta}
\text{ and }
\E[\b_k(X)]\le n^{k+1}q_k(n r_{k-1})^{1-\nu/\theta}.
\] 
\end{cor}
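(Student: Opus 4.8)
\textbf{Proof proposal for Corollary~\ref{cor:CFvanish}.}
The plan is to deduce the corollary directly from Proposition~\ref{prop:CFvanish} by choosing the parameters $\rho$ and $\dl'$ in that proposition appropriately in terms of the given $\theta$, $\nu$, $\dl$, and $M$. The natural choice is $\rho := \nu/\theta$ (which is $\ge 1$ since $\nu\ge\theta>0$), and then to verify that the two hypotheses \eqref{eq:CFvanish} of Proposition~\ref{prop:CFvanish} follow, for $n$ large, from the two hypotheses \eqref{eq:CFvanish2} of the corollary. The first hypothesis $r_{k-1}\ge K_0/n$ is the easy one: from $M r_k^{1-\theta}\ge r_{k-1}$ and $r_{k-1}\le 1$ we get $r_k \ge (r_{k-1}/M)^{1/(1-\theta)}$ when $\theta<1$, so combined with $r_k\ge(\nu+\dl)(\log n)/n$ one sees that $r_{k-1}$ cannot be too small; more simply, $r_k\ge(\nu+\dl)(\log n)/n$ and $Mr_k^{1-\theta}\ge r_{k-1}$ together with $r_k\ge r_{k-1}$ (from the monotonicity of $r_k$ noted after \eqref{eq:parameters}, bearing in mind the index shift) handle the two cases $\theta=1$ and $\theta<1$. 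I would just observe that in either case $n r_{k-1}\to\infty$, hence $n r_{k-1}\ge K_0$ eventually.

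The crux is the second hypothesis: we must show that \eqref{eq:CFvanish2} forces
\[
r_k \ge \frac{(\rho+\dl')\log(n r_{k-1})}{n}
\]
for a suitable fixed $\dl'>0$ and all large $n$. Using $M r_k^{1-\theta}\ge r_{k-1}$, i.e. $n r_{k-1}\le M n r_k^{1-\theta}$, and then $r_k\le 1$, we bound
\[
\log(n r_{k-1}) \le \log M + \log n + (1-\theta)\log r_k \le \log M + \log n.
\]
On the other hand the first part of \eqref{eq:CFvanish2} gives $n r_k \ge (\nu+\dl)\log n$. Since $\rho=\nu/\theta$, it suffices to pick $\dl'>0$ small enough that $(\nu/\theta+\dl')\le (\nu+\dl)/\theta \cdot (1-o(1))$; concretely, choosing $\dl' := \dl/(2\theta)$ works because $(\rho+\dl')\log(n r_{k-1}) \le (\nu/\theta + \dl/(2\theta))(\log n + \log M) \le (\nu+\dl)(\log n)/\theta \le n r_k/\theta$ for $n$ large, and $1/\theta\ge 1$... wait, that last step needs the reverse inequality, so instead I would keep the factor $1/\theta$ on the correct side: multiply the target through by $\theta$ and show $(\nu+\theta\dl')\log(n r_{k-1}) \le \theta n r_k$, which follows from $\theta n r_k \ge \theta(\nu+\dl)\log n$ and $(\nu+\theta\dl')(\log n+\log M)\le \theta(\nu+\dl)\log n$ once $\theta\dl'<\theta\dl$, i.e. $\dl'<\dl$, say $\dl':=\dl/2$, since then $(\nu+\theta\dl'/1)\log n = (\nu+\theta\dl/2)\log n < (\nu+\theta\dl)\log n \le \theta(\nu+\dl)\log n$ using $\theta\le 1$, and the $\log M$ term is absorbed as a lower-order correction for large $n$.

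With both hypotheses of Proposition~\ref{prop:CFvanish} verified (with $\rho=\nu/\theta$ and the above $\dl'$), that proposition yields
\[
\P(\b_k(X)\ne 0)\le n^{k+1}q_k(n r_{k-1})^{-\rho} = n^{k+1}q_k(n r_{k-1})^{-\nu/\theta}
\]
and
\[
\E[\b_k(X)]\le n^{k+1}q_k(n r_{k-1})^{1-\rho} = n^{k+1}q_k(n r_{k-1})^{1-\nu/\theta},
\]
which is exactly the assertion. The main obstacle, such as it is, is purely bookkeeping: tracking the $\log M$ and $(1-\theta)\log r_k\le 0$ corrections and the index shift between $r_k$ and $r_{k-1}$ so that the inequality $\log(n r_{k-1})\le (1+o(1))\log n$ is applied on the right side with the correct constant, and making sure the case $\theta=1$ (where the exponent $1/(1-\theta)$ is unavailable) is covered separately using $r_k\ge r_{k-1}$ directly. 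None of this is deep; it is a matter of choosing $\dl'$ conservatively and invoking $n\to\infty$.
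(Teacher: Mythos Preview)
Your overall strategy---apply Proposition~\ref{prop:CFvanish} with $\rho=\nu/\theta$ and a suitable $\dl'>0$---is exactly the paper's strategy, and your verification of the first hypothesis $r_{k-1}\ge K_0/n$ is essentially right (modulo the slip that monotonicity gives $r_{k-1}\ge r_k$, not $r_k\ge r_{k-1}$; it is the former that yields $n r_{k-1}\ge (\nu+\dl)\log n\to\infty$).

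However, the verification of the second hypothesis contains a genuine gap. After bounding $\log(n r_{k-1})\le \log M+\log n+(1-\theta)\log r_k$, you \emph{discard} the term $(1-\theta)\log r_k\le 0$ and work with $\log(n r_{k-1})\le \log M+\log n$. You then need $(\nu+\theta\dl')(\log n+\log M)\le \theta(\nu+\dl)\log n$ for large $n$, and you assert ``$(\nu+\theta\dl)\log n \le \theta(\nu+\dl)\log n$ using $\theta\le 1$.'' But this inequality reduces to $\nu\le\theta\nu$, i.e.\ $\theta\ge1$, so it fails whenever $\theta<1$. Concretely, with $\theta=\tfrac12$, $\nu=1$, $\dl=\tfrac1{10}$, $M=1$, and $r_k=(\nu+\dl)(\log n)/n$, your crude bound gives $\log(n r_{k-1})\le\log n$, so you would need $n r_k\ge (2+\dl')\log n$, while the hypothesis only guarantees $n r_k\ge 1.1\log n$.

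The fix---and this is what the paper does---is \emph{not} to throw away $(1-\theta)\log r_k$. Rewrite the bound as
\[
\log(n r_{k-1})\le (1-\theta)\log(n r_k)+\theta\log n+\log M,
\]
so that the quantity to control becomes
\[
n r_k-(\nu/\theta+\dl')\bigl[(1-\theta)\log(n r_k)+\theta\log n+\log M\bigr].
\]
Since $x\mapsto x-c(1-\theta)\log x$ is increasing for large $x$, this is minimized (over the allowed range of $r_k$) at $n r_k=(\nu+\dl)\log n$; substituting that value, the $\log n$ coefficients become $(\nu+\dl)-(\nu+\theta\dl')$, which is positive for $\dl'=\dl/(2\theta)$, and the remaining terms are $O(\log\log n)$. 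This is precisely the computation carried out in the paper's proof.
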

The last result is a general upper estimate, which is the main result of this section. 
\begin{theorem}\label{thm:CFdecayBetti}
Let $l\in\N$. There exists a constant $C\ge0$ depending only on $k$ and $l$ such that for all $n\in\N$,
\[
\E[\b_k(X)] \le n^{k+1}q_k\{1\wedge C(n r_k)^{-l}\}.
\]
Here, if $r_k=0$, then the right-hand side is interpreted as $n^{k+1}q_k$.
\end{theorem}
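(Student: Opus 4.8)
The plan is to bound $\E[\b_k(X)]$ by combining the deterministic Betti-number estimate of Theorem~\ref{thm:geneCVT} (applied with $D=k+1$) with a probabilistic tail estimate for $\gm$ of the relevant links. By Theorem~\ref{thm:geneCVT},
\[
\b_k(X) \le \sum_{\tau\in X_{k-1}}\gm\bigl(\lk_X(\tau)^{(1)};1-(k+1)^{-1}\bigr).
\]
Taking expectations and using that each potential $(k-1)$-simplex $\tau$ lies in $X$ with probability $q_{k-1}$, and that conditionally on $\tau\in X$ the link $\lk_X(\tau)^{(1)}$ has a prescribed vertex set $W_\tau$ (those $v$ with $\tau\cup\{v\}$ present through all faces) and prescribed random edges, the main task reduces to estimating $\E[\gm(\Gamma;\a)]$ for a random graph $\Gamma$ on a random vertex set, where $\a=1-(k+1)^{-1}$. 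First I would set up this conditioning carefully: given $\tau\in X$, a vertex $v$ belongs to $W_\tau$ with probability $r_{k-1}$ (independently over $v$), and given two vertices $v,w\in W_\tau$, the edge $\{v,w\}$ of $\lk_X(\tau)^{(1)}$ is present with conditional probability $r_k/r_{k-1}$ (the extra factor for the $k$-simplex $\tau\cup\{v,w\}$ and all its higher faces being already built). So $\lk_X(\tau)^{(1)}$ is, conditionally, an Erd\H{o}s--R\'enyi-type graph on a binomially-sized vertex set with edge probability $r_k/r_{k-1}$; its \emph{unconditional} edge density is $r_k$.

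The core estimate I would prove is: for a graph $\Gamma$ of the above type on $N$ vertices, $\gm(\Gamma;\a)$ is at most (roughly) the number of vertices not in the ``giant'' cluster, or more usefully, $\gm(\Gamma;\a)\le \#\{\text{small connected components}\} + (\text{correction})$; but the cleaner route is to bound $\gm(\Gamma;\a)$ in terms of the number of vertices of small degree. Concretely, an eigenvalue $\lm_i\le\a<1$ of $\cL[\Gamma]$ forces, via interlacing or a direct test-function argument, that the corresponding eigenvector cannot be supported on high-degree, well-connected vertices; the standard mechanism is that isolated vertices and tiny components each contribute a zero eigenvalue, and low-degree vertices drag eigenvalues down. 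I expect to show $\E[\gm(\Gamma;\a)]\le C' N \,\P(\deg_\Gamma(v)\le t)$ for a suitable threshold $t$ depending on $\a$ (e.g.\ $t = c/(1-\a)$-ish), plus a negligible term, using that the expected number of vertices in components of bounded size decays geometrically in the edge density. Then, since the edge probability is $r_k/r_{k-1}$ on a set of expected size $n r_{k-1}$, one computes $\P(\deg\le t)\lesssim (n r_k)^{-l'}$ by a binomial tail bound whenever $n r_k$ is large, giving the exponent $l$ after choosing the threshold large enough. Summing over the $\binom{n}{k}$ choices of $\tau$ and multiplying by $q_{k-1}$, and using $q_{k-1}r_{k-1}=q_k$ together with $N\asymp n r_{k-1}$ in expectation, produces the bound $n^{k+1}q_k\cdot C(n r_k)^{-l}$. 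The trivial bound $\E[\b_k(X)]\le n^{k+1}q_k$ (total number of $k$-simplices in expectation, an upper bound for $\b_k$) covers the ``$1\wedge$'' and the degenerate case $r_k=0$.

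The main obstacle, I expect, is the graph-theoretic lemma bounding $\E[\gm(\Gamma;\a)]$ by a vertex-degree tail: one must handle the eigenvalues in $(0,\a]$, not merely the zero eigenvalue (which only counts components). A robust way is a variational/covering argument: if $\cL[\Gamma]$ has $m+1$ eigenvalues $\le\a$, then there is an $(m+1)$-dimensional space of functions $f$ with $\sum_{\{v,w\}\in E}(f(v)-f(w))^2\le \a\sum_v \deg(v) f(v)^2$; by choosing coordinate projections one shows that most of the dimension must ``live'' on vertices whose local neighborhood is sparse, and the expected number of such vertices is controlled by the degree tail. Alternatively, one can iterate a peeling argument (repeatedly deleting low-degree vertices, tracking how $\gm$ changes under vertex deletion via eigenvalue interlacing for the normalized Laplacian) until the remaining ``core'' has spectral gap above $\a$ by the cohomology vanishing mechanism, so that $\gm$ of the core is $0$; then $\gm(\Gamma;\a)$ is bounded by the total number of peeled vertices, whose expectation is again a degree-tail quantity. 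I would adopt whichever of these makes the interlacing bookkeeping cleanest, and I anticipate the binomial large-deviation input (degree of a vertex in a graph with density $\sim r_k$ on $\sim n r_{k-1}$ vertices is $\sim n r_k$, with exponentially small lower tail) to be routine once the deterministic reduction is in place.
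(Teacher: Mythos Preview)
Your high-level reduction matches the paper's: apply Theorem~\ref{thm:geneCVT} with $D=k+1$, use Lemma~\ref{lem:CFlink} to identify the conditional law of $\lk_X(\tau)^{(1)}$ as an Erd\H{o}s--R\'enyi graph on a binomial vertex set with edge probability $r_k/r_{k-1}$, and reduce to bounding $\E[\gm(\Gamma;\a)]$ for $\a=1-(k+1)^{-1}$; the trivial Morse bound handles the ``$1\wedge$'' part. Where you diverge is in how that expectation is controlled. The paper does \emph{not} pass through low-degree vertex counts or peeling. It proves a trace/moment bound (Lemma~\ref{lem:ERspectra}): since $\gm(G;\a)+1=\#\{i:\mu_i\ge 1-\a\}$ for the eigenvalues $\mu_i$ of $A[G]$, one has $\gm(G;\a)+1\le(1-\a)^{-2l}\tr(A[G]^{2l})$, and $\E[\tr(A[G]^{2l})]$ is expanded over length-$2l$ closed walks with explicit control of the $1/\deg(v)$ factors. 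The combinatorics of the walk shapes (Lemma~\ref{lem: properties of closed walk}) then gives a bound of the form $C(np)^{-l}$ plus an isolated-vertex term. The proof also splits into three regimes of $r_k$: for $nr_k\ge(nr_{k-1})^{1/l}$ the trace bound alone is not sharp and Proposition~\ref{prop:CFvanish} is invoked instead; for $nr_k\le K_2$ the trivial bound suffices; the trace bound covers the middle range. You have no analogue of this case split.

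Your proposed route has a genuine obstacle at the deterministic step. An inequality of the type ``$\gm(\Gamma;\a)\le\#\{v:\deg(v)\le t\}+\text{small}$'' is false for general graphs (two large expanders joined by a single edge have large minimum degree but spectral gap near $0$), so it cannot be a pointwise lemma; any valid version must already exploit the random-graph law, which is precisely what the trace computation does directly. The peeling alternative is also problematic: Cauchy interlacing does not apply verbatim to the normalized Laplacian because the degree-weighted inner product changes when a vertex is deleted, so ``$\gm$ drops by at most one per deletion'' requires its own argument; and even granting it, you would still need the $t$-core of $G(N,r_k/r_{k-1})$ to have spectral gap exceeding $\a$, which is of the same order of difficulty as the quantitative statement you are trying to prove. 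In short, the missing ingredient is an actual bound on $\E[\gm(\Gamma;\a)]$; the paper supplies it via the closed-walk trace method, and nothing in your sketch replaces that.
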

We apply these results to typical examples.
\begin{example}[Linial--Meshulam complex]\label{ex:LM}
Consider the $d$-Linial--Meshulam complex $X\sim Y_d(n,p)$, as in Example~\ref{ex:LMcomplex}.
Letting $k=d-1$, we obtain $q_k=1$, $r_{k-1}=1$, and $r_k=p$.
Theorem~\ref{thm:CFdecayBetti}, Proposition~\ref{prop:byMorse}, and Corollary~\ref{cor:CFvanish} with $\theta=M=1$ together imply that
for given $0<c<k+2$, $l\in\N$, and $\nu'>\nu\ge1$, there exist $\eps_0>0$ and $C\ge0$ such that the following hold.
\begin{enumerate}
\item For every $n\in\N$,
\[
\E[\b_{d-1}(X)] \le n^d \{1\wedge C(np)^{-l}\}.
\] 
\item For sufficiently large $n$, if $p\le c/n$, then
\[\E[\b_{d-1}(X)]\ge \eps_0 n^d.\]
\item For sufficiently large $n$, if $p \ge {(\nu'\log n)}/n$, then
\[
\P(\b_{d-1}(X)\ne 0) \le n^{d-\nu}
\quad\text{and}\quad
\E[\b_{d-1}(X)]\le n^{d+1-\nu}. 
\]
\end{enumerate}
\end{example}
\begin{example}[Random clique complex]
Consider the random clique complex $X\sim C(n, p)$, as in Example~\ref{ex:CLcomplex}.
For $0\le k\le n-1$, we obtain $q_k=p^{\binom{k+1}2}$, $r_{k-1}=p^k$, and $r_k=p^{k+1}$. Here, $\binom{1}{2}=0$ per convention.
Theorem~\ref{thm:CFdecayBetti}, Proposition~\ref{prop:byMorse}, and Corollary~\ref{cor:CFvanish} with $\theta=1/(k+1)$ and $M=1$ together imply that, 
 for given $c_1>0$ and $c_2>0$ with $(k+1)/c_1+c_2/(k+2)<1$, $l\in\N$, and $\nu'>\nu\ge1/(k+1)$, there exist $\eps_0>0$ and $C\ge0$ such that the following hold.
\begin{enumerate}
\item For every $n\in\N$,
\[
\E[\b_k(X)] \le n^{k+1}p^{\binom{k+1}2}\{1\wedge C(np^{k+1})^{-l}\}.
\] 
\item For sufficiently large $n$, if $(c_1/n)^{1/k}\le p\le (c_2/n)^{1/(k+1)}$, then 
\[
  \E[\b_k(X)]\ge \eps_0 n^{k+1}p^{\binom{k+1}2}.
\]
\item For sufficiently large $n$, if $p \ge \{(\nu'\log n)/n\}^{1/(k+1)}$, then 
\begin{align*}
\P(\b_{k}(X) \ne 0) &\le n^{k+1}p^{\binom{k+1}2}(n p^k)^{-(k+1)\nu}\\
&=n^{k/2+1-\nu}(n p^{k+1})^{k/2-k\nu}\\
\shortintertext{and}
\E[\b_{k}(X)]&\le  n^{k+1}p^{\binom{k+1}2}(n p^k)^{1-(k+1)\nu}.
\end{align*}
\end{enumerate}
Note that when $\nu'> k/2+1+\eta$ for some $\eta\ge0$ in (3), the first conclusion implies $\P(\b_{k}(X) \ne 0)=o(n^{-\eta})$. 
When $\eta=0$, this claim is consistent with Theorem~1.1 (1) in \cite{K1}.
\end{example}
\subsection{Proofs of Propositions~\ref{prop:byMorse}~and \ref{prop:CFvanish}}
We follow \cite[Section~7]{K2} for the proof of Proposition~\ref{prop:byMorse}. 
For $k\ge0$, let $f_k(X)$ denote the cardinality of $X_k$, the set of all $k$-simplices of $X$. We set $f_{-1}(X) = 1$ per convention. 
The following inequality holds for arbitrary simplicial complexes.    
\begin{lemma}[{A version of the Morse inequality}]
\label{lem:MorseIneq}
Let $X$ be a simplicial complex. For every $k\ge0$, 
\begin{equation}\label{eq:MorseIneq}
f_k(X)-f_{k+1}(X)-f_{k-1}(X)\le\b_k(X)\le f_k(X). 
\end{equation}
\end{lemma}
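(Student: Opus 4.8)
The statement to prove is the Morse inequality for simplicial complexes: for every $k \ge 0$,
\[
f_k(X) - f_{k+1}(X) - f_{k-1}(X) \le \b_k(X) \le f_k(X).
\]

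The plan is to argue directly from the definition $\b_k(X) = \dim H_k(X) = \dim(\ker \del_k) - \dim(\im \del_{k+1})$, using the rank--nullity theorem on the boundary maps $\del_k \colon C_k(X) \to C_{k-1}(X)$. First I would record that $\dim C_k(X) = f_k(X)$ for $k \ge 0$ and $\dim C_{-1}(X) = 1 = f_{-1}(X)$, so the conventions in the statement are consistent. For the upper bound, since $\ker \del_k \subset C_k(X)$ and $\im \del_{k+1} \subset \ker \del_k$, we immediately get $\b_k(X) = \dim(\ker \del_k) - \dim(\im \del_{k+1}) \le \dim(\ker \del_k) \le \dim C_k(X) = f_k(X)$. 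This handles the right-hand inequality with essentially no work.

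For the lower bound, I would apply rank--nullity to $\del_k$ and to $\del_{k+1}$. Writing $z_k = \dim(\ker \del_k)$ and $b_k = \dim(\im \del_{k+1})$, rank--nullity for $\del_k$ gives $z_k = f_k(X) - \rank \del_k \ge f_k(X) - f_{k-1}(X)$, since $\rank \del_k \le \dim C_{k-1}(X) = f_{k-1}(X)$. Rank--nullity for $\del_{k+1}$ gives $b_k = \rank \del_{k+1} \le \dim C_{k+1}(X) = f_{k+1}(X)$. Combining, $\b_k(X) = z_k - b_k \ge (f_k(X) - f_{k-1}(X)) - f_{k+1}(X)$, which is the desired left-hand inequality. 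One should note that the argument for $k=0$ uses $\del_0 \colon C_0(X) \to C_{-1}(X) = \R$ and the convention $f_{-1}(X) = 1$, so nothing special is needed there; and if $X$ has dimension $k$, then $C_{k+1}(X) = 0$ and $f_{k+1}(X) = 0$, again consistent.

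There is no real obstacle here — the proof is a two-line application of rank--nullity — so the only thing to be careful about is bookkeeping with the reduced-homology conventions (the role of $C_{-1}(X) = \R$, $X_{-1} = \{\emptyset\}$, $\del_0\langle v\rangle = 1$, and $f_{-1}(X) = 1$) so that the boundary cases $k = 0$ and $k = \dim X$ are covered by the same formulas. I would phrase the proof uniformly in $k \ge 0$ using these conventions rather than splitting into cases.

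\begin{proof}
Recall that $\dim_\R C_k(X) = f_k(X)$ for every $k \ge -1$, where we use the conventions $C_{-1}(X) = \R$, $X_{-1} = \{\emptyset\}$, and $f_{-1}(X) = 1$. Fix $k \ge 0$ and set $z_k := \dim(\ker \del_k)$ and $b_k := \dim(\im \del_{k+1})$, so that $\b_k(X) = \dim H_k(X) = z_k - b_k$ by the relation $\im \del_{k+1} \subset \ker \del_k$.

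For the upper bound, since $\ker \del_k \subset C_k(X)$ and $b_k \ge 0$,
\[
\b_k(X) = z_k - b_k \le z_k \le \dim_\R C_k(X) = f_k(X).
\]

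For the lower bound, the rank--nullity theorem applied to $\del_k \colon C_k(X) \to C_{k-1}(X)$ yields
\[
z_k = \dim_\R C_k(X) - \rank \del_k \ge f_k(X) - \dim_\R C_{k-1}(X) = f_k(X) - f_{k-1}(X),
\]
because $\rank \del_k \le \dim_\R C_{k-1}(X) = f_{k-1}(X)$. Likewise, applying rank--nullity to $\del_{k+1} \colon C_{k+1}(X) \to C_k(X)$,
\[
b_k = \rank \del_{k+1} \le \dim_\R C_{k+1}(X) = f_{k+1}(X).
\]
Combining these two estimates gives
\[
\b_k(X) = z_k - b_k \ge \bigl(f_k(X) - f_{k-1}(X)\bigr) - f_{k+1}(X),
\]
which is \eqref{eq:MorseIneq}.
\end{proof}
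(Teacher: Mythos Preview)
Your proof is correct and takes essentially the same approach as the paper: both argue directly from rank--nullity, with the only cosmetic difference that the paper works with the coboundary maps $d_k$ while you use the boundary maps $\del_k$.
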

\begin{proof}
Since $f_k(X) = \dim\ker d_k + \rank d_k$, we have
\begin{align*}
\b_k(X) &= \dim\ker d_k-\rank d_{k-1}\\
& = (f_k(X)-\rank d_k) - \rank d_{k-1}\\
& \ge f_k(X) - f_{k+1}(X) - f_{k-1}(X)
\end{align*}
and
\[
\b_k(X) \le \dim\ker d_k \le f_k(X).\qedhere
\]
\end{proof}
\begin{proof}[Proof of Proposition~\ref{prop:byMorse}]
Choose $n_0>k$ such that 
\[
\frac{n_0(k+1)}{(n_0-k)c_1}+\frac{c_2}{k+2}<1. 
\]
Then, \eqref{eq:byMorse} implies that, for $n\ge n_0$,
\begin{align*}
\E[f_{k+1}(X)] &= \binom{n}{k+2}q_{k+1} =  \frac{(n-k-1)r_k}{k+2}\E[f_{k}(X)]\le\frac{c_2}{k+2}\E[f_{k}(X)]\\
\shortintertext{and}
\E[f_{k-1}(X)] &= \binom{n}{k}q_{k-1} =  \frac{k+1}{(n-k)r_{k-1}}\E[f_{k}(X)]\le \frac{n_0}{n_0-k}\frac{k+1}{c_1}\E[f_{k}(X)].
\end{align*}
Combining these estimates with the first inequality of \eqref{eq:MorseIneq} yields the desired inequality.
\end{proof}
We now turn to proving Proposition~\ref{prop:CFvanish}.
The following theorem states that the spectral gap of the Erd\H{o}s--R\'{e}nyi graph $G \sim G(n,p)$ concentrates around $1$ if the parameter $p$ is sufficiently large. 
\begin{theorem}[{Theorem 1.1 in \cite{HKP}, spectral gap theorem in \cite{K1}}]\label{thm:ERspegap}
Let $G\sim G(n,p)$ be the Erd\H{o}s--R\'{e}nyi graph.
Let $\eta\ge0$, $\dl>0$, and $\eps>0$.
Then, for sufficiently large $n$, if
\[
p \ge \frac{(1+\eta+\dl)\log n}n,
\]
then $\P(\lm_2[G] > 1- \eps) \ge 1 - \eps n^{-\eta}$. 
\end{theorem}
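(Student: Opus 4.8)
The plan is to recast the statement as a bound on the second-largest eigenvalue of the normalized adjacency operator and then to control that eigenvalue by concentration, the delicate point being the irregular degrees near the connectivity threshold. Let $M$ be the adjacency matrix of $G$ and $D$ the diagonal degree matrix, and set $\tilde M := D^{-1/2}MD^{-1/2}$, which is similar to the averaging matrix $A[G]=D^{-1}M$ and hence has the same spectrum. A direct computation gives $\tilde M(D^{1/2}\mathbf 1)=D^{1/2}\mathbf 1$, so once $G$ has no isolated vertex (so that $D$ is invertible) the vector $D^{1/2}\mathbf 1$ is an eigenvector of $\tilde M$ with eigenvalue $1$; write $P$ for the orthogonal projection onto its span. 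Since the eigenvalues of $\cL[G]=I_V-A[G]$ are exactly the numbers $1-\mu$ for $\mu$ an eigenvalue of $\tilde M$, on the event that $G$ is connected we have $\lm_2[G]=1-\mu_2(\tilde M)$, where $\mu_2$ denotes the second-largest eigenvalue; and the Courant--Fischer principle gives $\mu_2(\tilde M)\le\|\tilde M-P\|_{\mathrm{op}}$, because any $h\perp D^{1/2}\mathbf 1$ satisfies $Ph=0$. Thus the theorem reduces to showing
\[
\P\bigl(G\text{ has no isolated vertex and }\|\tilde M-P\|_{\mathrm{op}}<\eps\bigr)\ge 1-\eps n^{-\eta},
\]
since $\|\tilde M-P\|_{\mathrm{op}}<\eps<1$ forces the top eigenvalue of $\tilde M$ to be simple, hence $G$ connected, and then $\lm_2[G]>1-\eps$.

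First I would dispose of isolated vertices, which is the clean half and pins down the role of the constant $1+\eta+\dl$. Since $\deg(v)\sim\bin(n-1,p)$, we have $\P(\deg(v)=0)=(1-p)^{n-1}\le e^{-(n-1)p}$, and the hypothesis $p\ge(1+\eta+\dl)(\log n)/n$ yields $e^{-(n-1)p}\le n^{-(1+\eta+\dl)+o(1)}$. A union bound over the $n$ vertices bounds the probability of an isolated vertex by $n^{-(\eta+\dl)+o(1)}$, which is at most $\tfrac12\eps n^{-\eta}$ for large $n$: the extra $\dl>0$ is exactly the polynomial margin that upgrades ``no isolated vertex'' from a high-probability event to one of $n^{-\eta}$-rate.

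The remaining and genuinely hard step is the spectral bound $\|\tilde M-P\|_{\mathrm{op}}<\eps$ with failure probability $\tfrac12\eps n^{-\eta}$. I expect this to be the main obstacle. The naive approach—estimating $\|\tilde M-P\|_{\mathrm{op}}$ by $(\min_v\deg v)^{-1}\|M-\E M\|_{\mathrm{op}}$ together with $\|M-\E M\|_{\mathrm{op}}=O(\sqrt{np})$—breaks down at the threshold: a Chernoff bound shows that a uniform lower bound $\min_v\deg v\ge(1-t)\bar d$ with $\bar d:=(n-1)p$ and $t<1$ holds only with probability $1-n^{1-t^2(1+\eta+\dl)/2}$, which cannot be driven below $n^{-\eta}$ when $\dl$ is small. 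Hence $D$ is not deterministically comparable to $\bar d\,I_V$ at the required confidence level, and moreover the random normalization couples $\tilde M$ to the edge variables so that $\tilde M-P$ is not a sum of independent terms. The standard way around this, going back to Füredi--Komlós and Feige--Ofek, is to condition on the degree sequence and regularize, deleting or capping the $o(1)$-expected number of vertices of anomalous degree, and then to prove $\|\tilde M'-P'\|_{\mathrm{op}}=O(1/\sqrt{np})$ for the regularized graph $G'$—either through a conditional matrix-concentration inequality or through a trace-moment estimate of $\E[\tr((\tilde M'-P')^{2m})]$ with $m\asymp\log n$, where the regularization is precisely what keeps the closed-walk counts under control. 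Since $np\ge(1+\eta+\dl)\log n\to\infty$, the bound $O(1/\sqrt{np})$ is eventually smaller than $\eps$, and the trace method supplies the $n^{-\eta}$ tail. A final rank/interlacing argument transfers the estimate from $G'$ back to $G$, the small deleted periphery perturbing $\mu_2(\tilde M)$ by at most $\eps$. Combining this with the isolated-vertex bound of the previous paragraph yields the claimed inequality.
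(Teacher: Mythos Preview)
The paper does not prove this theorem: it is quoted verbatim as a result from the literature (Theorem~1.1 of \cite{HKP} and the spectral gap theorem of \cite{K1}) and used as a black box in the proof of Proposition~3.3. There is therefore no ``paper's own proof'' to compare against.

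Your outline is, in fact, a fair summary of the strategy in those references: reduce to an operator-norm bound on the normalized adjacency matrix minus the rank-one piece, handle isolated vertices by a union bound (which is where the $1+\eta+\dl$ constant enters), and deal with the irregular degrees near threshold by a regularization/deletion argument combined with a trace-moment or matrix-concentration estimate. You have correctly identified the real difficulty---that the minimum degree is \emph{not} uniformly comparable to $\bar d$ at confidence $n^{-\eta}$ when $\dl$ is small---and the standard remedy.

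That said, what you have written is a plan, not a proof. The substantive step, ``prove $\|\tilde M'-P'\|_{\mathrm{op}}=O(1/\sqrt{np})$ for the regularized graph with failure probability $\tfrac12\eps n^{-\eta}$,'' is asserted rather than carried out, and the closing ``rank/interlacing argument transfers the estimate from $G'$ back to $G$'' hides a nontrivial point: deleting vertices is a perturbation of unbounded rank, so Cauchy interlacing alone does not directly control $\mu_2(\tilde M)$ from $\mu_2(\tilde M')$; one needs an additional argument about the structure of the deleted set (e.g., that it is sparse enough for the restriction to have small norm). If you intend to supply a self-contained proof rather than cite \cite{HKP,K1}, those two steps are where the work lies.
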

The following lemma concerns the structure of links in multi-parameter random complexes. 
Let $X\sim X(n, \p)$ be a multi-parameter random complex with an $n$-point vertex set $V$ that is defined on a probability space $(\Om,\cF,\P)$. 
For a simplex $\tau$ with $\P(\tau\in X)>0$, we define a probability space $(\Om_\tau, \cF_\tau, \P_\tau)$ by
\[
\Om_\tau := \{\tau\in X\}, \,\cF_\tau := \{B\in\cF\mid B\subset \Om_\tau \}, \text{ and } \P_\tau(\cdot) := \P(\cdot\mid\Om_\tau ). 
\]
Let $V_X(\tau)$ denote the vertex set of $\lk_X(\tau)$ and $N_X(\tau)$ denote its cardinality; these are random variables on $\Om_\tau$. 
The expectation with respect to $\P_\tau$ is denoted by $\E_\tau$. 
Let $\bin(n,p)$ indicate the binomial distribution with parameters $n$ and $p$. 
\begin{lemma}\label{lem:CFlink}
Let $0\le k \le n-2$ and consider $\tau\in\binom{V}{k}$. Provided that $\P(\tau\in X)>0$, the distribution of $(\lk_X(\tau))^{(1)}$ under $\P_\tau$ is $X(n-k,(r_{k-1},r_k/r_{k-1},0,\dots,0))$. 
In particular, the distribution of $N_X(\tau)$ under $\P_\tau$ is $\bin(n-k, r_{k-1})$. 
\end{lemma}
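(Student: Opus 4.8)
The plan is to describe explicitly the random process that generates $\lk_X(\tau)$ conditioned on $\{\tau\in X\}$, identify the probabilities with which vertices and edges of the link are retained, and match them against the definition of the multi-parameter model. Fix $\tau\in\binom{V}{k}$ with $\P(\tau\in X)>0$, and condition on the event $\Om_\tau=\{\tau\in X\}$. Recall that $\sg\in X$ if and only if every face of $\sg$ (in the generation order) was added; for a vertex $w\in V\setminus\tau$, membership $w\in\lk_X(\tau)$ is equivalent to $\tau\cup\{w\}\in X$. The first step is to compute, for such $w$, the conditional probability $\P_\tau(\tau\cup\{w\}\in X)$. The simplex $\tau\cup\{w\}$ is a $k$-simplex, and it is added precisely when all of its faces were added and it itself passes its own $p_k$-coin; using the product structure \eqref{eq:parameters}, $\P(\tau\cup\{w\}\in X)=q_k$ while $\P(\tau\in X)=q_{k-1}$, and conditionally on $\tau\in X$ the only additional coins needed for $\tau\cup\{w\}\in X$ are those attached to faces of $\tau\cup\{w\}$ containing $w$ but not contained in $\tau$ — these are independent of the coins defining $\Om_\tau$. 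Their product is exactly $q_k/q_{k-1}=r_{k-1}$. Hence each vertex $w$ is retained in the link independently with probability $r_{k-1}$, giving $N_X(\tau)\sim\bin(n-k,r_{k-1})$ under $\P_\tau$, and these events are mutually independent over distinct $w$ because they involve disjoint families of coins.

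The second step is the edge probabilities. Given two vertices $w_1,w_2$ both already retained in $\lk_X(\tau)$, the pair $\{w_1,w_2\}$ is an edge of $\lk_X(\tau)$ iff $\tau\cup\{w_1,w_2\}\in X$, which is a $(k+1)$-simplex. Conditioning now on $\{\tau\cup\{w_1\}\in X\}\cap\{\tau\cup\{w_2\}\in X\}$ (the event that $w_1,w_2$ are vertices of the link), the further coins required for $\tau\cup\{w_1,w_2\}\in X$ are those on faces of $\tau\cup\{w_1,w_2\}$ containing both $w_1$ and $w_2$; again these are independent of everything conditioned on so far, and their product is $q_{k+1}/q_k$ divided appropriately — more precisely one checks directly from \eqref{eq:parameters} that this conditional probability equals $r_k/r_{k-1}$. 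So each edge of the link is present independently with probability $r_k/r_{k-1}$, conditionally on its two endpoints being present. Since $\lk_X(\tau)^{(1)}$ is a graph (dimension $\le 1$), no higher simplices enter. Combining the two steps: on $n-k$ potential vertices, retain each with probability $r_{k-1}$, then add each edge between retained vertices with probability $r_k/r_{k-1}$, and no $2$- or higher simplices; by definition this is precisely the law $X(n-k,(r_{k-1},r_k/r_{k-1},0,\dots,0))$.

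The main obstacle — really the only genuinely delicate point — is the independence bookkeeping: one must verify carefully that the collections of Bernoulli coins governing (a) the event $\Om_\tau$, (b) the retention of each link-vertex $w$, and (c) the presence of each link-edge $\{w_1,w_2\}$ are pairwise disjoint, so that the conditional probabilities multiply as claimed and the retained-vertex and present-edge indicators really are jointly independent in the required conditional sense. Concretely, a face $\sg$ of the ambient complex contributes its coin to $\Om_\tau$ iff $\sg\subset\tau$; to the retention of $w$ iff $w\in\sg\subset\tau\cup\{w\}$; and to the presence of $\{w_1,w_2\}$ iff $\{w_1,w_2\}\subset\sg\subset\tau\cup\{w_1,w_2\}$. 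These three conditions are mutually exclusive for a given $\sg$ (and, across different link-vertices or link-edges, the relevant faces are distinct), which is the crux. Once this is laid out, the probability computations are the routine binomial-coefficient identities already packaged in \eqref{eq:parameters}, namely $\prod_{i=0}^{k+1}p_i^{\binom{k+1}{i}}=r_{k-1}$ for the vertex coins above $\tau$ and the analogous product $r_k/r_{k-1}$ for the edge coins, and the conclusion follows.
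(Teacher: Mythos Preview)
Your proposal is correct and follows essentially the same approach as the paper: identify the coins governing vertex and edge membership in the link, verify they are disjoint from each other and from the coins defining $\Om_\tau$, and read off the probabilities $r_{k-1}$ and $r_k/r_{k-1}$. The only slip is in your closing summary, where you write $\prod_{i=0}^{k+1}p_i^{\binom{k+1}{i}}=r_{k-1}$; that product is $r_k$, and the correct identity for the vertex coins is $\prod_{i=0}^{k}p_i^{\binom{k}{i}}=r_{k-1}$, consistent with your earlier (correct) derivation via $q_k/q_{k-1}$.
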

\begin{proof}
A vertex $v\in V\setminus\tau$ belongs to $V_X(\tau)$ if and only if $X$ contains every possible simplex that can be described as the union of $\{v\}$ and a subset of $\tau$. For $0\le i\le k$, there are $\binom{k}{i}$ such $i$-simplices. This implies
\[
\P_\tau(v\in V_X(\tau)) = \prod_{i=0}^k p_i^{\binom{k}{i}} = r_{k-1}. 
\]
Moreover, events $\{v\in V_X(\tau)\}_{v\in V\setminus\tau}$ are independent under $\P_\tau$ since distinct events are described in terms of distinct simplices. 
Let $\hat V\subset V\setminus\tau$ with $\P_\tau(V_X(\tau) = \hat V)>0$. 
An edge between vertices $v$ and $w$ in $\hat V$  belongs to $\lk_X(\tau)$ if and only if $X$ contains every possible simplex described as the union of $\{v,w\}$ and a subset of $\tau$. For $1\le i\le k+1$, there are $\binom{k}{i-1}$ such possible $i$-simplices. This implies
\[
\P_\tau(\{v,w\}\in \lk_X(\tau)\mid V_X(\tau) = \hat V) = \prod_{i=1}^{k+1} p_i^{\binom{k}{i-1}} = \frac{r_k}{r_{k-1}}. 
\]
Moreover, events $\{e\in \lk_X(\tau)\}_{e\in\binom{\hat V}{2}}$ are independent under $\P_\tau(\cdot\mid V_X(\tau) = \hat V)$ by the same reasoning as used above. Thus, the claim holds. 
\end{proof}
\begin{proof}[Proof of Proposition~\ref{prop:CFvanish}]
From Theorem~\ref{thm:geneCVT},
\begin{align*}
\E[\b_k(X)]
&\le \sum_{\tau\in\binom{V}{k}}\E[N_X(\tau); \tau\in X_{k-1},\ \lm_2[\lk_X(\tau)^{(1)}] \le 1-(k+1)^{-1}]\\
&= \sum_{\tau\in\binom{V}{k}}\P(\tau\in X_{k-1})\E_{\tau}[N_X(\tau); \lm_2[\lk_X(\tau)^{(1)}] \le 1-(k+1)^{-1}]\\
&= \binom{n}{k}q_{k-1}\,\E[N; \lm_2[Z] \le 1-(k+1)^{-1}], 
\end{align*}
where $Z\sim X(n-k,(r_{k-1},r_k/r_{k-1},0,\dots,0))$ and $N$ is the number of vertices of $Z$, which follows $\bin(n-k, r_{k-1})$.
The last identity follows from Lemma~\ref{lem:CFlink}.   Define $\mu := \E[N] = (n-k)r_{k-1}$ and
recall the Chernoff bound 
\begin{equation}\label{eq:chernoff}
\P[|N - \mu| > \mu^{3/5}]\le \exp(-\mu^{1/5}/5).
\end{equation}
If $K_0\ge 2k$ and \eqref{eq:CFvanish} holds, then $\mu\ge n r_{k-1}-k \ge n r_{k-1}/2\ge K_0/2$. 
Thus,
\[
\frac{r_k}{r_{k-1}}
\ge \frac{(\rho+\dl)\log(n r_{k-1})}{n r_{k-1}}
\ge\sup_{m\ge \lfloor\mu-\mu^{3/5}\rfloor}\frac{(\rho+\dl/2)\log m}{m}
\]
for $K_0$ sufficiently large. 
By combining these estimates with Theorem~\ref{thm:ERspegap}, for $\eps=(k+1)^{-1}\wedge  2^{-\rho}$, we have
\begin{align*}
&\E[N; \lm_2[Z] \le 1-(k+1)^{-1}]\\
&\le\sum_{m=\lfloor\mu-\mu^{3/5}\rfloor}^{\lceil\mu+\mu^{3/5}\rceil} m \P(\lm_2[Z] \le 1-(k+1)^{-1} \mid N=m)\P(N=m)
+\E[N; |N-\mu|>\mu^{3/5}]\\
&\le \sum_{m=\lfloor\mu-\mu^{3/5}\rfloor}^{\lceil\mu+\mu^{3/5}\rceil}m \eps m^{1-\rho}\P(N=m) +\E[N^2]^{1/2}\P(|N-\mu|>\mu^{3/5})^{1/2}\\
&\le \eps \mu(\lfloor\mu-\mu^{3/5}\rfloor)^{1-\rho}+(\mu+\mu^2)^{1/2}\exp(-\mu^{1/5}/10)\\
&\le 2\eps\mu^{2-\rho}
\end{align*}
when $K_0$ is sufficiently large.
Here, note that the distribution of $\lm_2[Z]$ under $\P(\cdot\mid N=m)$ is that of $\lm_2[G]$ with $G\sim G(m,r_k/r_{k-1})$.
Therefore, 
\begin{align*}
\E[\b_k(X)]
&\le 2\eps\binom{n}{k}q_{k-1}\mu^{2-\rho} \\
&\le 2\eps n^k q_{k-1}n r_{k-1}(n r_{k-1}/2)^{1-\rho}\\
&\le n^{k+1} q_k (n r_{k-1})^{1-\rho}. 
\end{align*}
The estimate of $\P(\b_k(X) \ne 0)$ is obtained in the same way. In this case, from Corollary~\ref{cor:nonpureCVT}, we have
\begin{align*}
\P(\b_k(X) \ne 0)
&\le \P\Bigl(\text{There exists }\tau\in X_{k-1}\text{ such that }\lm_2[\lk_X(\tau)^{(1)}] \le 1-(k+1)^{-1}\Bigl)\\
&\le \sum_{\tau\in\binom{V}{k}}\P(\tau\in X_{k-1},\ \lm_2[\lk_X(\tau)^{(1)}] \le 1-(k+1)^{-1})\\
&= \binom{n}{k}q_{k-1}\,\P(\lm_2[Z] \le 1-(k+1)^{-1}). 
\end{align*}
Then, for $\eps=(k+1)^{-1}\wedge 2^{-\rho}$,
\begin{align*}
&\P(\lm_2[Z] \le 1-(k+1)^{-1})\\
&\le\sum_{m=\lfloor\mu-\mu^{3/5}\rfloor}^{\lceil\mu+\mu^{3/5}\rceil} \P(\lm_2[Z] \le 1-(k+1)^{-1} \mid N=m)\P(N=m)+\exp(-\mu^{1/5}/5)\\
&\le\eps(\lfloor\mu-\mu^{3/5}\rfloor)^{1-\rho}+\exp(-\mu^{1/5}/5)
\le 2\eps\mu^{1-\rho}
\end{align*}
for $K_0$ sufficiently large.
Therefore, 
\begin{align*}
\P(\b_k(X) \ne 0)
&\le 2\eps\binom{n}{k}q_{k-1}\mu^{1-\rho} \\
&\le 2\eps n^k q_{k-1}(n r_{k-1}/2)^{1-\rho}\\
&\le n^{k+1} q_k (n r_{k-1})^{-\rho}. \myqedhere
\end{align*}
\end{proof}
\begin{proof}[Proof of Corollary~\ref{cor:CFvanish}]
Take $K_0$ in Proposition~\ref{prop:CFvanish} in which we let $\rho$ and $\dl$ be $\nu/\theta$ and $\dl/(2\theta)$, respectively. From \eqref{eq:CFvanish2}, for sufficiently large $n$, we have $r_{k-1}\ge r_k\ge K_0/n$ and
\[
r_k \ge \frac{(\nu/\theta+\dl/(2\theta))\log (n r_{k-1})}{n}. 
\]
Indeed, if we set $\bar r = (\nu+\dl)(\log n)/n$, then, for sufficiently large $n$, 
\begin{align*}
&n r_k-(\nu/\theta+\dl/(2\theta))\log{(n r_{k-1})}\\
&\ge n r_k-(\nu/\theta+\dl/(2\theta))\log{\bigl(n M r_k^{1-\theta}\bigr)}\\
&= n r_k-(\nu/\theta+\dl/(2\theta))(1-\theta)\log{(n r_k)} - (\nu+\dl/2)(\log{n}-(\log M)/\theta)\\
&\ge n \bar r-(\nu/\theta+\dl/(2\theta))(1-\theta)\log{(n \bar r)} - (\nu+\dl/2)(\log{n}-(\log M)/\theta)\\
&=\dl/2 \log n -(\nu/\theta+\dl/(2\theta))\{(1-\theta)(\log\log n+\log(\nu+\dl))+\log M\}\\
&\ge 0. 
\end{align*}
The conclusion follows from Proposition~\ref{prop:CFvanish}. 
\end{proof}
\subsection{Proof of Theorem~\ref{thm:CFdecayBetti}}
Theorem~\ref{thm:geneCVT} plays a key role in proving Theorem~\ref{thm:CFdecayBetti}.
We first discuss the eigenvalues of the averaging operator on the Erd\H{o}s--R\'{e}nyi graph.

Let $G=V\sqcup E$ be a graph and let $h\in\N$. We call $w=(v_0, v_1, \ldots, v_h)\in V^{h+1}$ a walk on $G$ with length $h$ if $v_i$ is adjacent to $v_{i+1}$ for all $i=0, 1, \ldots, h-1$. A walk $w=(v_0, v_1, \ldots, v_h)\in V^{h+1}$ where $v_0=v_h$ is called a closed walk. We denote by $W_h(G)$ the set of all length-$h$ closed walks on $G$. 
Given a graph $G$ and a closed walk $w=(v_0, v_1, \ldots, v_h)\in W_h(G)$, let $G(w) = V(w) \sqcup E(w)$ denote the subgraph of $G$ induced by $w$, where
\[
V(w) := \{v_0, v_1, \ldots, v_{h-1}\} \text{ and }  E(w) := \{\{v_0, v_1\}, \{v_1, v_2\}, \ldots, \{v_{h-1}, v_h\}\}
\]
are the vertex set and the edge set, respectively. The multiplicity $m_s(w)$ of $s\in V(w)$ is defined by
\[
m_s(w) := \# \{j\in\{0, 1, \ldots, h-1\}\mid v_j = s \}. 
\]
For $1\le v, e\le h$, the set $W_h^{v,e}(G)$ is the set of all $w\in W_h(G)$ such that $\#V(w) = v$ and $\#E(w) = e$, and $w_h^{v,e}$ is the number of length-$h$ closed walks on $v$ unlabeled vertices that traverse exactly $e$ edges (perhaps multiple times). 
That is, 
\[
w_h^{v,e} := \#W_h^{v,e}(K_v)\big/v!, 
\]
where $K_v$ is the complete graph with $v$ vertices.
\begin{lemma}	\label{lem: properties of closed walk}
The following properties hold for $l\in\N$ and $1 \le v, e \le 2l$. 
\begin{enumerate}
\item If $w_{2l}^{v,e} > 0$, then $e \ge v-1$.
\item If $w_{2l}^{e+1,e} > 0$, then $e \le l$.
\end{enumerate}
\end{lemma}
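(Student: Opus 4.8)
The statement concerns closed walks of length $2l$ on unlabeled vertices, and both parts are purely combinatorial facts about the graph $G(w)$ induced by such a walk. The plan is to work with the induced subgraph $G(w) = V(w) \sqcup E(w)$ and exploit two elementary observations: that $G(w)$ is connected, and that a closed walk traverses each edge an even number of times in a suitable sense only when the walk is "tree-like." I would prove the two parts separately, since part (2) is a strengthening that requires a parity argument on top of the connectivity argument used for part (1).

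\textbf{Part (1).} Suppose $w = (v_0, v_1, \ldots, v_{2l})$ is a closed walk with $\#V(w) = v$ and $\#E(w) = e$, and $w_{2l}^{v,e} > 0$, so such a walk exists on some vertex set. The graph $G(w)$ is connected because every vertex of $V(w)$ appears in the walk and consecutive vertices are joined by an edge of $E(w)$: concretely, any $v_i$ is connected to $v_0$ along the edges $\{v_0,v_1\},\dots,\{v_{i-1},v_i\}$, all of which lie in $E(w)$. A connected graph on $v$ vertices has at least $v-1$ edges, hence $e = \#E(w) \ge v - 1$. This is the short direction.

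\textbf{Part (2).} Now suppose $w_{2l}^{e+1,e} > 0$, i.e.\ there is a closed walk $w$ of length $2l$ with $\#V(w) = e+1$ and $\#E(w) = e$. By the connectivity argument above, $G(w)$ is connected with exactly $e+1$ vertices and $e$ edges, so $G(w)$ is a \emph{tree}. The key point is then that a closed walk on a tree must traverse every edge it uses an even number of times: removing any edge $\{a,b\}$ of the tree disconnects it into two pieces, and since the walk starts and ends at the same vertex $v_0$, the number of times it crosses from one piece to the other must be even. Summing over all $e$ edges, the total length $2l = \sum_{i=0}^{2l-1}(\text{one traversal each})$ equals the sum over edges of the number of traversals of that edge, hence $2l \ge 2e$ (each of the $e$ edges is traversed at least twice, since each edge of $E(w)$ is used at least once and the count is even and positive). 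Therefore $e \le l$.

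\textbf{Main obstacle.} The only subtlety is making the parity claim in part (2) rigorous: one must be careful to count \emph{traversals} of an edge (steps $i$ with $\{v_i,v_{i+1}\} = \{a,b\}$) rather than something else, and to argue cleanly that this count is even. The cleanest way is the cut argument: fix an edge $\epsilon$ of the tree $G(w)$, let $U$ be one component of $G(w)\setminus\epsilon$, and observe that the quantity $\#\{i : v_i \in U\} \pmod 2$ is unchanged by any step not traversing $\epsilon$ and flips at each traversal of $\epsilon$; since $v_0 = v_{2l}$, the number of flips is even. I expect this to be the part requiring the most care, though it is still elementary; everything else is immediate from connectivity and the edge count of a tree.
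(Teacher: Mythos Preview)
Your proof is correct and follows essentially the same route as the paper. For (1) the paper phrases the connectivity argument via the Euler--Poincar\'e formula $v-e=1-\b_1(G(w))\le 1$, which is just your ``connected graph has at least $v-1$ edges'' in homological language; for (2) the paper likewise deduces that $G(w)$ is a tree and then simply asserts that a closed walk on a tree passes through each edge at least twice, whereas you supply the cut/parity justification for that claim---a welcome bit of extra care, but not a different strategy.
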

\begin{proof}
Take $w \in W_{2l}^{v,e}(K_v)$. By applying the Euler--Poincar\'{e} formula to the graph $G(w)$, we have $v-e = 1 -\b_1(G(w)) \le 1$. This implies (1). For the proof of (2), take $w \in W_{2l}^{e+1,e}(K_{e+1})$. Since $\b_1(G(w)) = 0$ (and $G(w)$ is connected), $G(w)$ is a tree. Then, since $w$ is a closed walk, $w$ passes through each edge of $G(w)$ at least twice. This implies $e \le l$. 
\end{proof}
\begin{lemma}\label{lem:ERspectra}
Let $G \sim G(n,p)$ be the Erd\H{o}s--R\'{e}nyi graph and let $\a > 0$. Let $\{\mu_i\}_{i=1}^n$ be all the (not necessarily distinct) eigenvalues of the averaging matrix $A[G]$. Then, for $l\in\N$ and $n \ge 2l$, 
\[
\E[\#\{i\mid\mu_i \ge \a\}] \le \frac{(2l)!}{\a^{2l}(n-2l+1)^{2l}p^{2l}}\sum_{\substack{1 \le v, e \le 2l;\\e \ge v-1}}w_{2l}^{v,e} n^v p^e + \frac{n(1-p)^{n-1}}{\a^{2l}}. 
\]
\end{lemma}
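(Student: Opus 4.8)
The plan is to dominate the count of large eigenvalues by an even power–trace of $A[G]$, expand that trace into a sum over closed walks, and estimate each walk's contribution via a lower bound on the vertex degrees by independent binomial variables. First, because $2l$ is even, every eigenvalue $\mu_i$ of $A[G]$ satisfies $\mu_i^{2l}\ge0$, with $\mu_i^{2l}\ge\a^{2l}$ when $\mu_i\ge\a$; hence $\a^{2l}\,\#\{i\mid\mu_i\ge\a\}\le\sum_{i=1}^n\mu_i^{2l}=\tr(A[G]^{2l})$, and after taking expectations it is enough to bound $\E[\tr(A[G]^{2l})]$ by $\a^{2l}$ times the claimed right-hand side. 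Expanding the trace over sequences $(v_0,\dots,v_{2l})$ with $v_0=v_{2l}$, a term $\prod_{j=0}^{2l-1}a_{v_jv_{j+1}}$ is nonzero only in two mutually exclusive cases: either all $v_j$ equal a single isolated vertex, the term being $1$; or $v_{j+1}$ is adjacent to $v_j$ for every $j$, in which case no $v_j$ is isolated and the term equals $\prod_{s\in V(w)}\deg(s)^{-m_s(w)}$, where $w=(v_0,\dots,v_{2l})\in W_{2l}(G)$. (A walk can neither leave nor enter an isolated vertex through a genuine edge, so the cases do not combine.) Consequently
\[
\E[\tr(A[G]^{2l})]=n(1-p)^{n-1}+\sum_{w\in W_{2l}(K_n)}\E\Bigl[\mathbf 1_{\{E(w)\subseteq E(G)\}}\prod_{s\in V(w)}\deg(s)^{-m_s(w)}\Bigr].
\]

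Next I would estimate each summand. Fix $w\in W_{2l}(K_n)$ and set $v:=\#V(w)$ (so $2\le v\le 2l$) and $e:=\#E(w)$. For each $s\in V(w)$ put $B_s:=\#\{t\in V\setminus V(w)\mid\{s,t\}\in E(G)\}$; then $\{B_s\}_{s\in V(w)}$ are mutually independent, each distributed as $\bin(n-v,p)$, and independent of the edges lying inside $V(w)$, hence of $\{E(w)\subseteq E(G)\}$, whose probability is $p^e$. On $\{E(w)\subseteq E(G)\}$ every $s\in V(w)$ is incident to at least one edge inside $V(w)$, so $\deg(s)\ge1+B_s$; thus the summand is at most $p^e\prod_{s\in V(w)}\E[(1+B_s)^{-m_s(w)}]$. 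For $B\sim\bin(N,p)$ and an integer $m\ge1$, the Beta identity $\{(B+1)(B+2)\cdots(B+m)\}^{-1}=\frac{1}{(m-1)!}\int_0^1x^B(1-x)^{m-1}\,dx$ gives $\E[\{(B+1)\cdots(B+m)\}^{-1}]\le((N+1)p)^{-m}$, and the elementary estimate $(B+i)/(B+1)\le i$ yields $(1+B)^{-m}\le m!\,\{(B+1)\cdots(B+m)\}^{-1}$, so $\E[(1+B)^{-m}]\le m!\,((N+1)p)^{-m}$. Applying this with $N=n-v\ge n-2l$, and using $\sum_{s\in V(w)}m_s(w)=2l$ together with $\prod_{s\in V(w)}m_s(w)!\le(2l)!$, each summand is at most $(2l)!\,p^e\,((n-2l+1)p)^{-2l}$, where $n-2l+1\ge1$ since $n\ge2l$.

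Finally I would sum over $w$. The walks $w\in W_{2l}(K_n)$ with $\#V(w)=v$ and $\#E(w)=e$ number $\binom nv\,v!\,w_{2l}^{v,e}\le n^vw_{2l}^{v,e}$, and by Lemma~\ref{lem: properties of closed walk}\,(1) there are none unless $e\ge v-1$; moreover $1\le v,e\le 2l$, and the case $v=1$ does not arise in $W_{2l}(K_n)$ (a nonconstant closed walk has at least two vertices). Hence $\sum_{w\in W_{2l}(K_n)}p^{\#E(w)}\le\sum_{1\le v,e\le 2l,\;e\ge v-1}w_{2l}^{v,e}n^vp^e$, and combining this with the previous two displays gives
\[
\E[\tr(A[G]^{2l})]\le n(1-p)^{n-1}+\frac{(2l)!}{(n-2l+1)^{2l}p^{2l}}\sum_{\substack{1\le v,e\le2l\\ e\ge v-1}}w_{2l}^{v,e}n^vp^e;
\]
dividing by $\a^{2l}$ yields the assertion. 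The delicate step is the third paragraph: carving out, for a given walk $w$, a family of mutually independent binomial variables that simultaneously bound all degrees $\deg(s)$, $s\in V(w)$, from below while remaining independent of $\{E(w)\subseteq E(G)\}$, together with the sharp moment bound $\E[(1+B)^{-m}]\le m!\,((N+1)p)^{-m}$, which is what produces exactly the constant $(2l)!$.
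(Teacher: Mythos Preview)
Your proof is correct and follows essentially the same route as the paper: the even-power trace bound, the expansion of $\tr(A[G]^{2l})$ into closed-walk contributions plus the isolated-vertex term, the lower bound $\deg(s)\ge 1+B_s$ with $B_s\sim\bin(n-v,p)$ counting neighbors outside $V(w)$, and the factorization by independence. The only cosmetic difference is that you derive the inverse-moment bound $\E[(1+B)^{-m}]\le m!\,((N+1)p)^{-m}$ via the Beta-integral identity, whereas the paper obtains the same inequality by direct summation over the binomial weights.
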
 
\begin{proof}
To proceed, we identify the vertex set of $G$ with $\{1,2,\dots,n\}$. 
Let $\{X_{ij}\}_{1\le i < j \le n}$ be independent and identically distributed random variables that follow the Bernoulli distribution with parameter $p$. The Erd\H{o}s--R\'{e}nyi graph $G$ can be generated by $\{X_{ij}\}_{1\le i < j \le n}$: edge $\{i, j\}$ is supposed to belong to $G$ if and only if $X_{ij} = 1$. In addition, we define $X_{ii}=0$ for $1\le i\le n$ and $X_{ji} = X_{ij}$ for $1 \le i < j \le n$. 
Then, $a_{ij}$, the $(i, j)$-component of $A[G]$, is given by
\begin{align*}
a_{ij} &= \begin{cases}
X_{ij} \big/ \left(\sum_{s=1}^n X_{is}\right)		& \text{if $\sum_{s=1}^n X_{is}\neq 0$}, \\
1		& \text{if $\sum_{s=1}^n X_{is}=0$ and $i=j$}, \\
0		& \text{otherwise}. 
\end{cases}
\end{align*}
The obvious bound gives 
\[
\#\{ i\mid\mu_i \ge \a\} \le \frac{1}{\a^{2l}}\sum_{i=1}^n\mu_i^{2l} 
= \frac{1}{\a^{2l}}\tr(A[G]^{2l}). 
\]
Let $I(G)$ denote the number of isolated vertices of $G$. With this, 
\begin{align}\label{eq:trIndep}
\tr(A[G]^{2l}) &= \sum_{1 \le i_0, i_1, \ldots, i_{2l-1} \le n} a_{i_0i_1}a_{i_1i_2}\cdots a_{i_{2l-1}i_0}\\
&= \sum_{\substack{1 \le i_0, i_1, \ldots, i_{2l-1} \le n;\\i_0\neq i_1, i_1\neq i_2,\ldots, i_{2l-1}\neq i_0}} a_{i_0i_1}a_{i_1i_2}\cdots a_{i_{2l-1}i_0} + I(G)	\nonumber\\
&= \sum_{w=(i_0, i_1, \ldots, i_{2l}) \in W_{2l}(G)} a_{i_0i_1}a_{i_1i_2}\cdots a_{i_{2l-1}i_{2l}} + I(G)\nonumber\\
&= \sum_{w=(i_0, i_1, \ldots, i_{2l}) \in W_{2l}(G)}\prod_{j=0}^{2l-1}\frac{1}{\sum_{s_j = 1}^{n}X_{i_j s_j}} + I(G)
\nonumber\\
&\le \sum_{w=(i_0, i_1, \ldots, i_{2l}) \in W_{2l}(G)}\prod_{j=0}^{2l-1}\frac{1}{\sum_{s_j \notin V(w)}X_{i_j s_j}+1} + I(G). \nonumber
\end{align}
Here, in the second line, we used the fact that, for each $i=1,\dots,n$, $a_{ii} \neq 0$ if and only if $a_{is} = 0$ for every $s\neq i$. In the third line, recall that if vertex $i$ is not adjacent to vertex $j \neq i$, then $a_{ij} = 0$. 
The last inequality follows from the fact that each $i_j$ has at least one adjacent vertex in $V(w)$. 

By using the independence of $\{X_{ij}\}_{1\le i < j \le n}$, the expectation of the first term of the last line of \eqref{eq:trIndep} is equal to
\begin{align}\label{eq:inversBinom}
&\sum_{w=(i_0, i_1, \ldots, i_{2l}) \in W_{2l}(K_n)}\E\left[\prod_{j=0}^{2l-1}\frac{1}{\sum_{s_j \notin V(w)}X_{i_j s_j}+1}\,;\,w \in W_{2l}(G)\right]
\\
&= \sum_{w=(i_0, i_1, \ldots, i_{2l}) \in W_{2l}(K_n)}\P(w \in W_{2l}(G))\E\left[\prod_{j=0}^{2l-1}\frac{1}{\sum_{s_j \notin V(w)}X_{i_j s_j}+1}\right]
\nonumber\\
&= \sum_{\substack{1 \le v, e \le 2l;\\e \ge v-1}}\sum_{w=(i_0, i_1, \ldots, i_{2l}) \in W_{2l}^{v,e}(K_n)} p^e\prod_{i \in V(w) } \E\left[(Z_v+1)^{-m_i(w)}\right], \nonumber
\end{align}
where $Z_v \sim \bin(n-v, p)$. Here, Lemma~\ref{lem: properties of closed walk}(1) was used for the last identity. 
We also have, denoting $m_i(w)$ by $m_i$, 
\begin{align*}
&\E\left[ (Z_v+1)^{-m_i}\right]\\
&\le m_i!\,\E\left[ (Z_v+m_i)^{-1}(Z_v+m_i-1)^{-1}\cdots(Z_v+1)^{-1}\right]\\
&= m_i!\,\sum_{r=0}^{n-v} \frac{1}{(r+m_i)(r+m_i-1)\cdots(r+1)}\cdot\frac{(n-v)!}{r!\,(n-v-r)!}p^r(1-p)^{n-v-r}\\
&= \frac{m_i!}{(n-v+m_i)(n-v+m_i-1)\cdots(n-v+1)\,p^{m_i}}\\
&\qad\times \sum_{r=0}^{n-v} \frac{(n-v+m_i)!}{(r+m_i)!\,(n-v-r)!}p^{r+m_i}(1-p)^{n-v-r}\\
&\le \frac{m_i!}{(n-v+1)^{m_i}p^{m_i}}.
\end{align*}
Since $\sum_{i\in V(w)} m_i(w) = 2l$, we have $\prod_{i\in V(w)}\{m_i(w)!\}\le (2l)!$.
By combining these estimates, \eqref{eq:inversBinom} is dominated by
\begin{align*}
\sum_{\substack{1 \le v, e \le 2l;\\ e \ge v-1}}\sum_{w=(i_0, i_1, \dots, i_{2l})\in W_{2l}^{v,e}(K_n)} \frac{p^e\,(2l)!}{(n-v+1)^{2l}p^{2l}}
&\le \sum_{\substack{1 \le v, e \le 2l;\\e \ge v-1}}\frac{w_{2l}^{v,e} n^v p^e \,(2l)!}{(n-v+1)^{2l}p^{2l}}\\
&\le \frac{(2l)!}{(n-2l+1)^{2l}p^{2l}}\sum_{\substack{1 \le v, e \le 2l;\\e \ge v-1}}w_{2l}^{v,e} n^v p^e. 
\end{align*}
Since $\E[I(G)] = n(1-p)^{n-1}$, we reach the desired conclusion. 
\end{proof}
We remark that $\gm(G;1-\a)$ is $\#\{i\mid \mu_i\ge \a\}-1$ since $\cL[G]=I_V-A[G]$.
Now, we prove Theorem~\ref{thm:CFdecayBetti}.
\begin{proof}[Proof of Theorem~\ref{thm:CFdecayBetti}]
Since $\E[f_{k}(X)]=\binom{n}{k+1}q_k$, the second inequality of \eqref{eq:MorseIneq} implies that 
\begin{equation}\label{eq:trivial}
\E[\b_k(X)] \le n^{k+1}q_k. 
\end{equation}
Thus, it suffices to prove
\[
\E[\b_k(X)] \le C n^{k+1}q_k(n r_k)^{-l}
\]
for some constant $C$ that depends on only $k$ and $l$.
Take $K_0$ in Proposition~\ref{prop:CFvanish} with $\rho=l+1$ and $\dl=1$.
Take $K_1\ge K_0\vee1$ such that $x^{1/l}\ge (l+2)\log x$ for all $x\ge K_1$.
Suppose that 
\begin{equation}\label{eq:CFdecayBetti_first}
r_k\ge \frac{K_1}n\vee\frac{(n r_{k-1})^{1/l}}{n}.
\end{equation}
Then, $r_{k-1}\ge r_k\ge K_1/n$ and $r_k\ge(l+2)\{\log (n r_{k-1})\}/n$ hold. 
Thus, from Proposition~\ref{prop:CFvanish} with $\rho=l+1$ and $\dl=1$, 
\[
\E[\b_k(X)]\le n^{k+1}q_k (nr_{k-1})^{-l}\le n^{k+1}q_k (nr_k)^{-l}.
\]
Next, consider the constraint
\begin{equation}\label{eq:CFdecayBetti_middle}
\frac{K_2}{n}\le r_k \le \frac{(n r_{k-1})^{1/l}}{n}
\end{equation}
for some constant $K_2\ge K_1$ that will be specified later.
By applying Theorem~\ref{thm:geneCVT} to $X$ with $D=k+1$, we have
\begin{align*}
\E[\b_k(X)] &\le \E\left[\sum_{\tau \in X_{k-1}}\gm\bigl(\lk_X(\tau)^{(1)};1-(k+1)^{-1}\bigr)\right]\\
&= \sum_{\tau \in \binom{V}{k}}\P(\tau \in X_{k-1})\E_\tau\bigl[\gm\bigl(\lk_X(\tau)^{(1)};1-(k+1)^{-1}\bigr)\bigl]\\
&=\binom{n}{k} q_{k-1}\E[\gm\bigl(Z;1-(k+1)^{-1}\bigr)], 
\end{align*}
where $Z\sim X(n-k,(r_{k-1},r_k/r_{k-1},0,\dots,0))$.
The last identity follows from Lemma~\ref{lem:CFlink}. Denote by $N$ the number of vertices of $Z$, which follows $\bin(n-k, r_{k-1})$.  Define $\mu := \E[N] = (n-k)r_{k-1}$.
Take $K_2\ge K_1\vee 2k$ so that $\lfloor x-x^{3/5}\rfloor\ge 2l$ for all $x\ge K_2/2$ and suppose that \eqref{eq:CFdecayBetti_middle} holds. Consequently, we have $\mu \ge n r_{k-1}/2\ge K_2/2$ and $\lfloor\mu-\mu^{3/5}\rfloor\ge 2l$. Then, Lemma~\ref{lem:ERspectra} implies that
\begin{align}\label{eq:CFdecayBetti2}
&\E[\gm\bigl(Z;1-(k+1)^{-1}\bigr)]\\
&\le\sum_{m=\lfloor\mu-\mu^{3/5}\rfloor}^{\lceil\mu+\mu^{3/5}\rceil} \E[\gm\bigl(Z;1-(k+1)^{-1}\bigr)+1 \mid N=m]\,\P(N=m)\nonumber\\
&\qad+\E[\gm\bigl(Z;1-(k+1)^{-1}\bigr)+1;|N- \mu|> \mu^{3/5}]-1\nonumber\\
&\le\sum_{m=\lfloor\mu-\mu^{3/5}\rfloor}^{\lceil\mu+\mu^{3/5}\rceil} \Biggl\{\frac{(2l)! (k+1)^{2l}}{(m-2l+1)^{2l}(r_k/r_{k-1})^{2l}}\sum_{\substack{1 \le v, e \le 2l;\\e \ge v-1}}w_{2l}^{v,e}m^v \left(\frac{r_k}{r_{k-1}}\right)^e\nonumber\\
&\qad+(k+1)^{2l}m\left(1-\frac{r_k}{r_{k-1}}\right)^{m-1}\Biggr\}\P(N=m)
+\E[N;|N- \mu|> \mu^{3/5}]-1\nonumber\\
&\le (2l)!\,(k+1)^{2l}n r_{k-1}\sum_{\substack{1 \le v, e \le 2l;\\e \ge v-1}}\frac{w_{2l}^{v,e}A_v}{(nr_{k-1})^{2l-v+1}(r_k/r_{k-1})^{2l-e}}\nonumber\\
&\qad + (k+1)^{2l}\E\left[N\left(1-\frac{r_k}{r_{k-1}}\right)^{N-1}\right] + \E[N^2]^{1/2}\P(|N- \mu|> \mu^{3/5})^{1/2}-1,\nonumber
\end{align}
where 
\[
A_v=\sum_{m=\lfloor\mu-\mu^{3/5}\rfloor}^{\lceil\mu+\mu^{3/5}\rceil}\left(\frac{n r_{k-1}}{m-2l+1}\right)^{2l}\left(\frac{m}{n r_{k-1}}\right)^v \P(N=m).
\]
If $K_2$ (which depends on only $k$ and $l$) is chosen to be larger in advance, each $A_v$ becomes less than $2$.
In what follows, $C$ is a positive constant depending on only $k$ and $l$; it may vary from line to line.
Concerning the first term of the last line of \eqref{eq:CFdecayBetti2}, we have
\begin{align*}
&\sum_{\substack{1 \le v, e \le 2l;\\e \ge v-1}}\frac{w_{2l}^{v,e}}{(nr_{k-1})^{2l-v+1}(r_k/r_{k-1})^{2l-e}}\\
&= \sum_{m=0}^{2l}\sum_{\substack{1 \le v, e \le 2l;\\v-e=1-m}}\frac{w_{2l}^{v,e}}{(n r_{k-1})^m(n r_k)^{2l-e}}\\
&= \sum_{e=1}^{l}\frac{w_{2l}^{e+1,e}}{(n r_k)^{2l-e}} + \sum_{m=1}^{2l}\sum_{\substack{1 \le v, e \le 2l;\\v-e=1-m}}\frac{w_{2l}^{v,e}}{(n r_{k-1})^m(n r_k)^{2l-e}}\quad\text{(from Lemma~\ref{lem: properties of closed walk}(2))}\\
&\le (n r_k)^{-l}\sum_{e=1}^{l}w_{2l}^{e+1,e} + (n r_{k-1})^{-1}\sum_{m=1}^{2l}\sum_{\substack{1 \le v, e \le 2l;\\v-e=1-m}}w_{2l}^{v,e}\\
&\le C(n r_k)^{-l}. 
\end{align*}
Here, the first inequality follows from the relations $nr_{k-1}\ge n r_k\ge K_2\ge1$, and the last one follows from the inequality $(nr_{k-1})^{-1} \le (n r_k)^{-l}$ in \eqref{eq:CFdecayBetti_middle}. 

Considering the second term of \eqref{eq:CFdecayBetti2},
\begin{align*}
\E\bigl[N(1-r_k/r_{k-1})^{N-1}\bigr] &= \E[I(Z)]\\
&= \sum_{v\in \{1,2,\dots,n-k\}}\P\bigl(\text{$v$ is an isolated vertex in $Z$}\bigr)\\
&=(n-k) r_{k-1} (1-r_k)^{n-k-1}\\
&\le C n r_{k-1} (n r_k)^{-l}.
\end{align*}
The last inequality follows from the inequalities $(1-x)^m\le e^{-m x}\le C(m x)^{-l}$ for $0\le x\le1$ and $m\ge0$, and $\exp(r_k(k+1))\le e^{k+1}$.

The third term of \eqref{eq:CFdecayBetti2} is dominated by
$(\mu^2+\mu)^{1/2}\exp(-\mu^{1/5}/10)$
from the Chernoff bound \eqref{eq:chernoff}, which is less than $1$ for $K_2$ greater than some absolute constant.

Combining these estimates, we obtain 
\[
\E[\b_k(X)]\le C n^k q_{k-1}n r_{k-1}(n r_k)^{-l} = C n^{k+1} q_k (n r_k)^{-l}.
\]

Lastly, if
\begin{equation}\label{eq:CFdecayBetti_last}
r_k \le \frac{K_2}{n},
\end{equation}
then, from \eqref{eq:trivial},
\[
\E[\b_k(X)]\le n^{k+1}q_k\le K_2^{l}n^{k+1}q_k(n r_k)^{-l}.
\]

Since \eqref{eq:CFdecayBetti_first}, \eqref{eq:CFdecayBetti_middle}, and \eqref{eq:CFdecayBetti_last} cover all cases, the proof is completed.
\end{proof}
\section{Estimates of lifetime sums of random simplicial complex processes}
\subsection{Persistent homology}
Let $X$ be a simplicial complex.  A family $\cX = \{X(t)\}_{t\ge 0}$ of subcomplexes of $X$ is a right-continuous filtration of $X$ if $X(s)\subset X(t)$ for $0\le s \le t$ and $X(t) = \bigcap_{t' > t}X(t')$ for $t\ge 0$. Since $X$ is a finite simplicial complex, $X(t)$ differs from $\bigcup_{t' < t}X(t')$ only finitely many times. 
Here, $X(t)$ can be the empty set, which is considered to be a $(-1)$-dimensional simplicial complex.

Let $\R[\R_{\ge 0}]$ be an $\R$-vector space of formal linear combinations of finite elements in $\R_{\ge 0}$. 
Each element of $\R[\R_{\ge 0}]$ is expressed as a linear combination of monomials $z^t$ ($t\in\R_{\ge 0}$), where $z$ is an indeterminate. The product of two elements is given by the linear extension of $az^s\cdot bz^t := abz^{s+t}$~($a, b\in\R$, $s, t\in\R_{\ge 0}$). This operation equips $\R[\R_{\ge 0}]$ with a graded ring structure. 
For $k\ge0$, the $k$-th persistent homology $\ph_k(\cX)$ of $\cX = \{X(t)\}_{t\ge 0}$ is defined as the result of taking a direct sum in the $k$-th homology vector space: 
\[
\ph_k(\cX) := \bigoplus_{t\ge 0}H_k(X(t)). 
\]
It follows that $\ph_k(\cX)$ has a graded module structure over the graded ring $\R[\R_{\ge 0}]$ with isomorphisms from $H_k(X(s))$ to $H_k(X(t))$ ($0\le s\le t$) induced by the inclusion from $X(s)$ to $X(t)$. The following theorem is called the structure theorem of the persistent homology. 
\begin{theorem}[{\cite{ZC,HS}}]
For each $k\ge 0$, there exist unique indices $p, q\in \mathbb{Z}_{\ge 0}$ and $\{b_i\}_{i = 1}^{p+q}$, $\{d_i\}_{i = 1}^p\subset\R_{\ge 0}$ such that  $b_i < d_i$ for all $i=1, \ldots, p$. Furthermore, the following graded module isomorphism holds. 
\begin{equation}\label{eq:structure}
\ph_k(\cX) \simeq \bigoplus_{i=1}^p\left((z^{b_i})/ (z^{d_i})\right)\oplus\bigoplus_{i=p+1}^{p+q}(z^{b_i}), 
\end{equation}
where $(z^{a})$ expresses an ideal in $\R[\R_{\ge 0}]$ generated by the monomial $z^a$. 
\end{theorem}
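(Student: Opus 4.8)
The plan is to prove the structure theorem by reducing it to the standard classification of finitely generated graded modules over a graded PID, after first arguing that $\ph_k(\cX)$ is in fact finitely generated over $\R[\R_{\ge 0}]$. The key observation is that, although $\R_{\ge0}$ is uncountable, the filtration $\cX = \{X(t)\}_{t\ge 0}$ of the \emph{finite} simplicial complex $X$ changes only at finitely many parameter values; let $0 \le t_1 < t_2 < \cdots < t_m$ be the finite set of times at which $X(t)$ strictly increases (together with $t_0 = 0$). First I would show that $\ph_k(\cX)$ is generated as an $\R[\R_{\ge0}]$-module by the homology classes supported at these finitely many times. Indeed, for any $t$ with $t_j \le t < t_{j+1}$, the inclusion $X(t_j)\hookrightarrow X(t)$ is the identity, so $H_k(X(t)) = H_k(X(t_j))$ and the copy of $H_k(X(t))$ inside $\ph_k(\cX)$ is obtained from the copy of $H_k(X(t_j))$ by multiplication by $z^{t - t_j}$; since each $H_k(X(t_j))$ is finite-dimensional over $\R$, a finite generating set results.

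Next I would invoke the algebraic structure theorem for finitely generated graded modules over the graded ring $\R[\R_{\ge0}]$. Strictly speaking $\R[\R_{\ge0}]$ is not a PID (the exponent monoid is $\R_{\ge0}$, not $\Z_{\ge0}$), so the cleanest route is to pass to the finitely generated sub-monoid of $\R_{\ge0}$ generated by the ``breakpoints'' and the differences among them that actually occur in a chosen finite generating set and relation set; rescaling, one lands in a graded module over $\R[z]$ (or over $\R[\Z_{\ge0}]$), which is a graded PID, and there the classification into cyclic pieces $\R[z]/(z^{d})$ and free pieces $\R[z]$ is classical. Unwinding the rescaling produces a decomposition of $\ph_k(\cX)$ as in \eqref{eq:structure}, with each torsion summand $(z^{b_i})/(z^{d_i})$ corresponding to a class born at time $b_i$ and dying at time $d_i > b_i$, and each free summand $(z^{b_i})$ to a class born at $b_i$ that never dies. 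Uniqueness of the multiset $\{(b_i, d_i)\}$ and of $p, q$ follows from the uniqueness part of the classification over the PID, which in turn can be read off from the ranks of the graded pieces and of the maps between them — equivalently, from the persistence Betti numbers $\dim(\im(H_k(X(s)) \to H_k(X(t))))$, which are intrinsic to $\cX$.

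The main obstacle I expect is handling the passage from the ``real-exponent'' ring $\R[\R_{\ge0}]$ to an honest PID cleanly, i.e. making precise that only finitely many real parameters are relevant and that the grading is respected throughout the reduction; once this bookkeeping is done, everything else is the standard graded Smith normal form / elementary divisor argument. An alternative that sidesteps the ring-theoretic subtlety entirely is to work combinatorially: order all simplices of $X$ by their birth times (breaking ties arbitrarily but consistently), run the standard persistence pairing algorithm (column reduction of the boundary matrix), and check directly that the resulting birth--death pairs give the decomposition \eqref{eq:structure}, with uniqueness following because the pairing is independent of the tie-breaking choices at the level of the multiset of intervals. I would present the module-theoretic argument as the main line and remark that it is exactly the finite-complex version of the original proof in \cite{ZC} and \cite{HS}, to which I would refer for the detailed verification.
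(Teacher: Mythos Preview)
The paper does not give a proof of this theorem at all: it is stated with attribution to \cite{ZC,HS} and used as a black box, so there is no in-paper argument against which to compare your proposal.

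That said, your sketch is broadly sound and is essentially the Zomorodian--Carlsson approach you cite at the end: exploit the finiteness of the underlying complex to see that only finitely many filtration values matter, which reduces the problem to a finitely generated graded module over the polynomial ring $\R[z]$ (a graded PID), where the decomposition into shifted cyclic summands is classical. Your identification of the delicate point---making rigorous the passage from $\R[\R_{\ge0}]$ to an honest PID---is accurate; one clean way to do this is to note that the submonoid of $\R_{\ge0}$ generated by the finitely many jump times embeds into a copy of $\Z_{\ge0}$ after a suitable common rescaling only when the jump times are commensurable, so in general one should instead argue directly via a filtered basis (as in \cite{HS}) or via the interval decomposition of persistence modules of finite type, rather than literally reducing to $\R[z]$. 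The alternative combinatorial route via the pairing algorithm that you mention is also correct and is closer in spirit to the original treatment in \cite{ZC}.
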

Here, $b_i$ and $d_i$ are called the $k$-th birth and death times, respectively, and they indicate the times of the appearance and disappearance (again, respectively) of $k$-dimensional holes in the filtration $\cX = \{X(t)\}_{t\ge 0}$. The lifetime $l_i$ is defined by $l_i := d_i - b_i$. We set $d_i = \infty$ for $p+1 \le i \le p+q$. 
We define the lifetime sum $L_k(\cX)$ by
\[
L_k(\cX)=\sum_{i=1}^{p+q}(d_i-b_i).
\]
The following formula is a generalization of \eqref{eq: key of Frieze's zeta 3} to filtrations. 
\begin{theorem}[Lifetime formula {\cite[Proposition~2.2]{HS}}]	\label{thm:lifetimeformula}
Let $\cX = \{X(t)\}_{t\ge 0}$ be a right-continuous filtration of a simplicial complex $X$.
Then, for each $k\ge0$, 
\[
L_k(\cX) = \int_0^\infty \b_k(X(t))\,dt. 
\]
\end{theorem}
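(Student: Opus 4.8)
The plan is to prove the identity $L_k(\cX) = \int_0^\infty \b_k(X(t))\,dt$ by combining the structure theorem \eqref{eq:structure} with an explicit computation of the Betti number $\b_k(X(t))$ at each time $t$ in terms of the birth–death pairs. The key observation is that, under the decomposition \eqref{eq:structure}, the dimension of $H_k(X(t))$ equals the number of summands that are ``alive'' at time $t$; that is,
\[
\b_k(X(t)) = \#\{\, i \in \{1,\dots,p+q\} \relmiddle| b_i \le t < d_i \,\},
\]
where we use the convention $d_i = \infty$ for $p+1 \le i \le p+q$. This follows because the summand $(z^{b_i})/(z^{d_i})$ contributes a one-dimensional space to the homology at grade $t$ precisely when $b_i \le t < d_i$, and the summand $(z^{b_i})$ contributes when $b_i \le t$. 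Making this precise requires recalling how the grading on $\R[\R_{\ge0}]$-modules interacts with evaluation at a fixed parameter $t$: the grade-$t$ piece of $(z^{a})/(z^{b})$ is $\R$ if $a \le t < b$ and $0$ otherwise, and the grade-$t$ piece of $(z^a)$ is $\R$ if $a \le t$ and $0$ otherwise. Since \eqref{eq:structure} is an isomorphism of graded modules, taking grade-$t$ parts on both sides (and using that $H_k(X(t))$ is the grade-$t$ part of $\ph_k(\cX)$ by definition) yields the displayed formula for $\b_k(X(t))$.

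Once this pointwise formula is established, the rest is an interchange of sum and integral:
\[
\int_0^\infty \b_k(X(t))\,dt
= \int_0^\infty \sum_{i=1}^{p+q} \mathbf{1}_{[b_i, d_i)}(t)\,dt
= \sum_{i=1}^{p+q} \int_0^\infty \mathbf{1}_{[b_i, d_i)}(t)\,dt
= \sum_{i=1}^{p+q} (d_i - b_i) = L_k(\cX).
\]
The interchange is justified by the monotone convergence theorem (or Tonelli's theorem), since all the indicator functions are non-negative; note that if $q \ge 1$ the integral is $+\infty$, matching the convention that $d_i - b_i = \infty$ there, so the identity holds as an equality in $[0,\infty]$. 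One should also check measurability of $t \mapsto \b_k(X(t))$, which is immediate since it is a finite sum of indicator functions of intervals, and it is in fact a right-continuous step function because $\cX$ is a right-continuous filtration of a finite complex (so only finitely many jumps occur).

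The main obstacle I expect is not the integration step, which is routine, but rather the careful bookkeeping in the first step: one must verify that the grade-$t$ component functor applied to the right-hand side of \eqref{eq:structure} really does compute $\dim H_k(X(t))$ correctly, including handling the half-open versus closed interval conventions (a hole present at its birth time but absent at its death time) and confirming that the persistence module structure on $\ph_k(\cX)$ — with the maps induced by inclusions $X(s) \hookrightarrow X(t)$ — is compatible with the $\R[\R_{\ge0}]$-action in the way the structure theorem presupposes. This is essentially a translation between the ``functor from a poset'' picture of persistence and the ``graded module'' picture, and once the dictionary is fixed the computation of each $\mathbf{1}_{[b_i,d_i)}$ contribution is immediate. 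Since this bookkeeping is standard in the persistent homology literature (and is exactly what is carried out in \cite{ZC,HS}), I would present it concisely, citing the structure theorem for the existence of the decomposition and then extracting the pointwise Betti number formula as its direct consequence.
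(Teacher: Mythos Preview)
Your proposal is correct and follows the standard route. Note, however, that the paper does not supply its own proof of this statement: Theorem~\ref{thm:lifetimeformula} is quoted from \cite[Proposition~2.2]{HS} without argument. That said, your approach matches exactly the method the paper uses for the closely related Proposition~\ref{prop:lifetime_formula2}, where the authors write $\sum_i \varphi(d_i-) = \sum_i \int 1_{[0,d_i)}\,d\varphi = \int \sum_i 1_{[0,d_i)}\,d\varphi = \int \b_k\,d\varphi$ under the hypothesis $b_i=0$; your argument is the same indicator-function interchange with $\varphi(t)=t$ and general $b_i$, so there is nothing to contrast.
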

Let $k\ge0$ and $T\ge0$. We also define the $k$-th lifetime sum until time $T$ by 
\[
(L_k(\cX))_T=\sum_{i=1}^{p+q}((d_i\wedge T) -(b_i\wedge T)).
\]
As an analogue of Theorem~\ref{thm:lifetimeformula}, the equality 
\[
(L_k(\cX))_T = \int_0^T \b_k(X(t))\,dt 
\]
holds.
\subsection{Lifetime sums of random simplicial complex processes}
We consider a class of random simplicial complex processes associated with an $n$-point vertex set $V$ and moving multi-parameters $\p = (p_0, p_1, \ldots, p_{n-1})$. 
Let $n\in\N$ and denote the complete $(n-1)$-dimensional simplicial complex by $K(n)$, that is, the family of all nonempty subsets of $V$. 
To each simplex $\sg\in K(n)$ we assign an independent non-negative random variable $u_\sg$ with distribution function $F_\sg$. We assume that these distribution functions $F_\sg$ are identical for equal-dimensional simplices and, for each $\sg\in K(n)$ with $\dim\sg=i$, we denote $F_\sg$ by $p_i(\cdot)$. 
We define a random simplicial complex process $\cX_n = \{X_n(t)\}_{t\ge0}$ by
\[
X_n(t) := \{\sg\in K(n)\mid u_\tau \le t \text{ for every simplex $\emptyset\neq\tau\subset\sigma$}\}. 
\]
We call this process the multi-parameter random complex process with $n$ vertices and multi-parameter function $\p(\cdot) = (p_0(\cdot), p_1(\cdot), \ldots, p_{n-1}(\cdot))$. 
Note that $X_n(t) \sim X(n,\p(t))$ for fixed $t$ and that $\cX_n$ is a right-continuous filtration of $ K(n)$. 
Note also that $X_n(t)$ can be expressed as 
\[
X_n(t)=\{\sg\in K(n)\mid w_\sg\le t\}, 
\]
where $w_\sg:=\max\{u_\sg\mid\emptyset\neq\tau\subset\sg\}$. 

In what follows, each $p_i(t)$ is assumed to be independent of $n$.
We write $q_k(t)$ and $r_k(t)$, respectively, for the $q_k$ and $r_k$ defined in \eqref{eq:parameters} that are associated with $X_n(t)$.
The generalized inverse function $\check r_k$ of $r_k$ is defined by
\[
\check r_k(u)=\inf\{t\ge0\mid r_k(t)>u\}\quad \text{for }u<1,
\]
and $\check r_k(1)=\infty$.
We additionally define
\begin{gather*}
Q_k(t)=\int_0^t q_k(s)\,ds\quad\text{for }t\ge0,\\
\Phi_k(u)=Q_k(\check r_k(u))\text{ and }\Psi_k(u)=Q_k(\check r_{k-1}(u))\quad \text{for }u\in[0,1).
\end{gather*}
We note that $\Phi_k\ge\Psi_k$ since $\check r_k$ is nondecreasing with respect to $k$. 
Moreover, $\check r_k$, $\Phi_k$, and $\Psi_k$ are nondecreasing right-continuous functions.
The following relations are fundamental. For $t\ge0$, $u\in[0,1)$, and $\eps>0$,
\begin{itemize}
\item $\check r_k(r_k(t)-\eps)\le t\le \check r_k(r_k(t))$; and
\item $r_k(\check r_k(u)-\eps)\le u\le r_k(\check r_k(u))$ if $\check r_k(u)\ge\eps$. 
\end{itemize}
We provide results for the asymptotic behavior of the lifetime sum of $\cX_n$ in the following theorems.
\begin{theorem}	\label{thm:CForder}
\begin{enumerate}
\item If there exist $A\in(0,1]$ and $u_0\in(0,1)$ such that
\begin{equation}\label{eq:CForder_Phi}
\Phi_k(u_0)>0\quad\text{and}\quad\Phi_k(u/2)\ge A\Phi_k(u)\text{ for }0\le u\le u_0,
\end{equation}
then, for any $m\in\N$, there exists a constant $C\ge0$ such that for sufficiently large $n$, 
\[
\E[(L_k(\cX_n))_T] \le C n^{k+1}\Phi_k(1/n)(1+Tn^{-m})\quad\text{for $T>0$. }
\]
Moreover, if $\int_0^\infty t^{1+\dl}\,dq_{k+1}(t)<\infty$ for some $\dl>0$, then 
\[
\E[L_k(\cX_n)] =O(n^{k+1}\Phi_k(1/n)). 
\]
\item If $\Phi_k(u)=O(u^m)$ as $u\to0$ for all $m\ge0$, 
then, for all $m\in\N$, there exists a constant $C\ge0$ such that for sufficiently large $n$, 
\[
\E[(L_k(\cX_n))_T]\le C(1+T)n^{-m}\quad\text{for $T>0$. }
\]
Moreover, if $\int_0^\infty t^{1+\dl}\,dq_{k+1}(t)<\infty$ for some $\dl>0$, then $\E[L_k(\cX_n)] =O(n^{-m})$ for all $m\ge0$.
\end{enumerate}
\end{theorem}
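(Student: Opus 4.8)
The plan is to turn the lifetime sum into an integral of expected Betti numbers and then estimate that integral by splitting the range of the filtration parameter according to the size of $r_k(t)$. First I would invoke the truncated lifetime formula together with Tonelli's theorem to write $\E[(L_k(\cX_n))_T]=\int_0^T\E[\b_k(X_n(t))]\,dt$, and then, since $X_n(t)\sim X(n,\p(t))$, apply Theorem~\ref{thm:CFdecayBetti}, which provides for every $l\in\N$ a constant $C_l$ (depending only on $k,l$) with
\[
\E[\b_k(X_n(t))]\le n^{k+1}q_k(t)\bigl\{1\wedge C_l(nr_k(t))^{-l}\bigr\}.
\]
The integer $l$ will be fixed large at the end, depending on $m$ and, in case~(1), on $A$.

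Fix $n$ large. The key monotonicity fact is that $r_k(\cdot)$ is nondecreasing, so $\{t:r_k(t)\le u\}\subseteq[0,\check r_k(u)]$ and hence $\int_{\{r_k(t)\le u\}}q_k(t)\,dt\le Q_k(\check r_k(u))=\Phi_k(u)$. I would decompose $[0,T]$ into (a) $\{r_k(t)\le1/n\}$, on which the factor $1$ in the minimum is kept, contributing at most $n^{k+1}\Phi_k(1/n)$; (b) the dyadic shells $S_j=\{2^j/n<r_k(t)\le2^{j+1}/n\}$ for $0\le j\le j_0$, where $j_0$ is the largest $j$ with $2^{j+1}/n\le u_0$ (in case~(2), $u_0$ replaced by a threshold $u_1>0$ below which $\Phi_k(u)=O(u^{m'})$), on each of which $C_l(nr_k(t))^{-l}\le C_l2^{-jl}$ and $\int_{S_j}q_k\le\Phi_k(2^{j+1}/n)$, so $S_j$ contributes at most $n^{k+1}C_l2^{-jl}\Phi_k(2^{j+1}/n)$; and (c) the bulk $\{r_k(t)>u_0/2\}$, on which the crude bounds $q_k\le1$, $(nr_k(t))^{-l}\le(nu_0/2)^{-l}$, and $\int_0^Tq_k\le T$ give a contribution $\le C'n^{k+1-l}T$. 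Since $2^{j_0+1}/n>u_0/2$, these three pieces cover $[0,T]$.

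In case~(1) the doubling hypothesis iterates to $\Phi_k(2^{j+1}/n)\le A^{-(j+1)}\Phi_k(1/n)$ for $0\le j\le j_0$, so the shell contributions sum to at most $n^{k+1}\Phi_k(1/n)\,C_lA^{-1}\sum_{j\ge0}(2^{-l}A^{-1})^j$, a convergent geometric series once $2^l>A^{-1}$; thus (a)+(b) is $O(n^{k+1}\Phi_k(1/n))$. The same doubling together with $\Phi_k(u_0)>0$ also yields an a priori lower bound $\Phi_k(1/n)\ge c_2n^{-\beta}$ with $\beta=\log_2(1/A)$, and this is exactly what lets the bulk term $C'n^{k+1-l}T$ be absorbed into $Cn^{k+1}\Phi_k(1/n)\,Tn^{-m}$ as soon as $l\ge m+\lceil\beta\rceil$; fixing such an $l$ completes the first assertion of~(1). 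In case~(2), $n^{k+1}\Phi_k(1/n)=O(n^{-m})$ automatically, the shell sum $n^{k+1}\sum_jC_l2^{-jl}\Phi_k(2^{j+1}/n)$ collapses to $O(n^{k+1-m'})$ whenever $l>m'$, and taking $m'=k+1+m$, $l=k+2+m$ makes the bulk term $O(n^{-m}T)$ as well; this gives the first assertion of~(2). For the ``moreover'' statements I would split $L_k(\cX_n)=(L_k(\cX_n))_{T_0}+\int_{T_0}^\infty\b_k(X_n(t))\,dt$ with $T_0=n^\gamma$: because $H_k(X_n(t))$ depends only on the $(k+1)$-skeleton, $\b_k(X_n(t))\neq0$ forces some $(k+1)$-simplex to be missing, so $\E[\b_k(X_n(t))]\le\binom{n}{k+1}\binom{n}{k+2}(1-q_{k+1}(t))$, and since $q_{k+1}$ is the distribution function of $\max\{u_\tau:\emptyset\neq\tau\subset\sg_0\}$ for a $(k+1)$-simplex $\sg_0$, the moment hypothesis $\int_0^\infty t^{1+\dl}\,dq_{k+1}(t)<\infty$ and Markov's inequality give $\int_{T_0}^\infty(1-q_{k+1}(t))\,dt=O(T_0^{-\dl})$, so the tail is $O(n^{2k+3-\gamma\dl})$, which is $O(n^{k+1}\Phi_k(1/n))$ (resp.\ $O(n^{-m})$) once $\gamma$ is chosen large; the truncated part is controlled by the already-proven first assertion applied with $m$ replaced by $\lceil\gamma\rceil$, keeping $1+T_0n^{-\lceil\gamma\rceil}$ bounded.

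The hard part is the bulk region $\{r_k(t)>u_0/2\}$: there $\Phi_k$ gives no control whatsoever (it may even be infinite if $q_k$ does not decay), and one is forced to rely instead on the polynomial gain $(nr_k(t))^{-l}$ from Theorem~\ref{thm:CFdecayBetti} together with the lower bound $\Phi_k(1/n)\ge c_2n^{-\beta}$ distilled from the doubling hypothesis, and---for the statements over all of $[0,\infty)$---on the entirely separate vanishing mechanism that $\b_k=0$ once the $(k+1)$-skeleton is complete, coupled with the moment condition on $q_{k+1}$.
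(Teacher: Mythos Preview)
Your argument is correct and reaches the same conclusions, but the route differs from the paper's in two places. For part~(1), instead of your dyadic decomposition into shells $\{2^j/n<r_k(t)\le 2^{j+1}/n\}$, the paper splits only once at $S_n=\check r_k(1/n)$ and handles the region $t>S_n$ by a Stieltjes integration by parts against $\Xi(t)=-(nr_k(t))^{-l}$, using the continuous consequence $\Phi_k(Ku)\ge AK^{\gamma}\Phi_k(u)$ of the doubling hypothesis (your $\beta$ is their $\gamma=-\log_2 A$). Your dyadic sum and their continuous integral encode the same trade-off between the decay $(nr_k)^{-l}$ and the growth of $\Phi_k$; your version is more elementary and avoids the bookkeeping with generalized inverses, while theirs is slightly cleaner once those tools are in hand. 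For part~(2) the paper's argument is considerably shorter than yours: it just splits at $\check r_k(n^{-1/2})$, bounds the first piece by $n^{k+1}\Phi_k(n^{-1/2})$ and the second by $C n^{k+1-l/2}T$, and chooses $l$ large---no dyadic layer is needed since $\Phi_k(u)=O(u^{m'})$ already gives all the decay. For the ``moreover'' clauses the paper conditions on $U_n=\max_{\sigma\in\binom{V}{k+2}}w_\sigma$ and bounds $\E[U_n;\,U_n>n^m]$ via $\E[U_n^{1+\delta}]\le\binom{n}{k+2}\int t^{1+\delta}\,dq_{k+1}$; your Fubini-based bound $\E[\b_k(X_n(t))]\le\binom{n}{k+1}\binom{n}{k+2}(1-q_{k+1}(t))$ followed by $\int_{T_0}^\infty(1-q_{k+1})\,dt=O(T_0^{-\delta})$ is an equivalent repackaging of the same moment estimate.
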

\begin{theorem}	\label{thm:CForder2}
Suppose that there exist $u_0\in(0,1)$, $B>1$, and $D\in[0,(k+2)/(4(k+1)))$ such that
\[
\Phi_k(Du) \ge B \Psi_k(u)\quad\text{for }0\le u\le u_0.
\]
Then, there exists some $c>0$ such that for $T>\check r_k(0)$, 
\[
\E[(L_k(\cX_n))_T] \ge c n^{k+1}\Phi_k(1/n)\quad\text{for sufficiently large $n$.}
\]
In particular, $\E[L_k(\cX_n)]=\Om(n^{k+1}\Phi_k(1/n))$. 
\end{theorem}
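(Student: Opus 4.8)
The plan is to derive a lower bound for $\E[(L_k(\cX_n))_T]$ from the identity $(L_k(\cX_n))_T=\int_0^T\E[\b_k(X_n(t))]\,dt$ (Theorem~\ref{thm:lifetimeformula} and its truncated analogue) by integrating the lower bound for $\E[\b_k(X_n(t))]$ furnished by Proposition~\ref{prop:byMorse}. That proposition says that if $r_{k-1}(t)\ge c_1/n$ and $r_k(t)\le c_2/n$ with $(k+1)/c_1+c_2/(k+2)<1$, then $\E[\b_k(X_n(t))]\ge\eps_0 n^{k+1}q_k(t)$ for large $n$. So first I would fix constants $c_1>0$ and $c_2\ge0$ satisfying that Morse inequality and a compatibility condition with $B$ and $D$ to be chosen below; the choice $c_2=(k+2)D'$ and $c_1$ large works if $D'\in[0,(k+2)/(4(k+1)))$ is slightly above $D$, so that $(k+1)/c_1+c_2/(k+2)<1$ can be arranged.

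Next, I would translate the two pointwise conditions $r_{k-1}(t)\ge c_1/n$ and $r_k(t)\le c_2/n$ into a time interval using the generalized inverse functions: $r_k(t)\le c_2/n$ holds for $t<\check r_k(c_2/n)$, and $r_{k-1}(t)\ge c_1/n$ holds for $t>\check r_{k-1}(c_1/n)$ (up to the usual $\eps$-adjustments at jumps, which are harmless after integration). Hence, writing $a_n:=\check r_{k-1}(c_1/n)$ and $b_n:=\check r_k(c_2/n)$, for $T>\check r_k(0)\ge a_n$ (for large $n$, since $\check r_k(c_2/n)\to\check r_k(0)$ as $n\to\infty$, and similarly for $\check r_{k-1}$) and large $n$,
\[
\E[(L_k(\cX_n))_T]\ge\int_{a_n}^{b_n\wedge T}\E[\b_k(X_n(t))]\,dt\ge\eps_0 n^{k+1}\int_{a_n}^{b_n\wedge T}q_k(t)\,dt=\eps_0 n^{k+1}\bigl(Q_k(b_n\wedge T)-Q_k(a_n)\bigr).
\]
Now $Q_k(b_n)=Q_k(\check r_k(c_2/n))=\Phi_k(c_2/n)$ and $Q_k(a_n)=Q_k(\check r_{k-1}(c_1/n))=\Psi_k(c_1/n)$. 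For $T$ strictly above $\check r_k(0)$ and $n$ large, $b_n\wedge T$ will exceed $\check r_k(0)$, and since $Q_k$ is constant on $[0,\check r_k(0)]$ one gets $Q_k(b_n\wedge T)\ge Q_k(\check r_k(\min\{c_2,\text{something}\}/n))$; more carefully, one picks $n$ large enough that $b_n\le T$, which holds eventually because $b_n\downarrow\check r_k(0)<T$. So for large $n$ the truncation at $T$ is inactive and the lower bound becomes $\eps_0 n^{k+1}\bigl(\Phi_k(c_2/n)-\Psi_k(c_1/n)\bigr)$.

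The final step is to compare this with $n^{k+1}\Phi_k(1/n)$ using the hypothesis $\Phi_k(Du)\ge B\Psi_k(u)$ for $0\le u\le u_0$ together with the scaling/monotonicity built into $\Phi_k$. Here I would also invoke a companion scaling estimate on $\Phi_k$ itself—of the same flavor as \eqref{eq:CForder_Phi}—to absorb the discrepancy between the arguments $c_2/n$, $c_1/n$, and $1/n$; concretely, after possibly shrinking $c_2$ and choosing $c_1=c_2/D''$ for suitable $D''<D$, the relation $\Phi_k(c_2/n)\ge B\Psi_k(c_1/n)$ gives $\Phi_k(c_2/n)-\Psi_k(c_1/n)\ge(1-B^{-1})\Phi_k(c_2/n)\ge\text{const}\cdot\Phi_k(1/n)$ for large $n$, provided $\Phi_k$ satisfies a doubling-type bound so that $\Phi_k(c_2/n)\asymp\Phi_k(1/n)$. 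The constraint $D<(k+2)/(4(k+1))$ is exactly what leaves room to pick all of $c_1,c_2,D',D''$ consistently with the Morse condition $(k+1)/c_1+c_2/(k+2)<1$. Combining yields $\E[(L_k(\cX_n))_T]\ge cn^{k+1}\Phi_k(1/n)$, and letting $T\to\infty$ (using $(L_k(\cX_n))_T\le L_k(\cX_n)$) gives $\E[L_k(\cX_n)]=\Om(n^{k+1}\Phi_k(1/n))$.

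The main obstacle is the bookkeeping in the last step: ensuring that a single choice of constants $c_1,c_2$ simultaneously (i) satisfies the Morse inequality of Proposition~\ref{prop:byMorse}, (ii) is compatible with the hypothesis $\Phi_k(Du)\ge B\Psi_k(u)$ after rescaling the arguments, and (iii) lets one pass from $\Phi_k(c_2/n)$ back to $\Phi_k(1/n)$ up to a constant. The numerical slack $D<(k+2)/(4(k+1))$ is precisely engineered for this, but verifying that the inequalities close requires care, especially tracking where the $\eps$-perturbations in $r_k(\check r_k(u)\pm\eps)$ versus $u$ enter and confirming they vanish in the integral. Everything else—the reduction via the lifetime formula, the eventual inactivity of the truncation at $T$, and the monotonicity of $\check r_k$, $\Phi_k$, $\Psi_k$—is routine.
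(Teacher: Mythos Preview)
Your overall strategy matches the paper's: integrate the Morse-inequality lower bound of Proposition~\ref{prop:byMorse} over a time window $[\check r_{k-1}(c_1/n),\check r_k(c_2/n))$ and identify the result as $\Phi_k(c_2/n)-\Psi_k(c_1/n)$. The gap is in step~(iii), which you yourself flag as the main obstacle. You propose to ``invoke a companion scaling estimate on $\Phi_k$ itself---of the same flavor as \eqref{eq:CForder_Phi}'' in order to pass from $\Phi_k(c_2/n)$ back to $\Phi_k(1/n)$. But no such doubling hypothesis is part of Theorem~\ref{thm:CForder2}; condition~\eqref{eq:CForder_Phi} belongs to Theorem~\ref{thm:CForder}(1), not here. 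With your suggested choice $c_2=(k+2)D'$ (which for $k=0$ forces $c_2<1$), there is no way to compare $\Phi_k(c_2/n)$ with $\Phi_k(1/n)$ without an extra assumption, so the argument as written does not close.

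The fix is to choose the constants so that only the \emph{monotonicity} of $\Phi_k$ is needed. The paper takes $c_2=\alpha:=(k+2)/2$ and $c_1=\alpha/D$. Then (i) the Morse condition reads $(k+1)D/\alpha+\alpha/(k+2)=2(k+1)D/(k+2)+1/2<1$, which is exactly the hypothesis $D<(k+2)/(4(k+1))$; (ii) setting $u=c_1/n=\alpha/(Dn)$ in the assumption gives $\Phi_k(c_2/n)=\Phi_k(Du)\ge B\Psi_k(u)=B\Psi_k(c_1/n)$ directly (note your choice $D''<D$ points the inequality the wrong way: one needs $c_2/c_1\ge D$); and (iii) since $c_2=\alpha\ge1$, the bound $\Phi_k(c_2/n)\ge\Phi_k(1/n)$ follows from monotonicity alone. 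This yields $\Phi_k(c_2/n)-\Psi_k(c_1/n)\ge(1-B^{-1})\Phi_k(1/n)$ with no doubling required. You also need to dispose of the case where the integration interval is empty (i.e.\ $\check r_{k-1}(c_1/n)\ge\check r_k(c_2/n)$): the paper observes that this, combined with the hypothesis, forces $\Phi_k(\alpha/n)=0$, so the conclusion is vacuous for that $n$.
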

Before proceeding to the proof, we consider typical situations in which
\begin{equation}\label{eq:typical_example}
\Phi_k(u)=\Theta(u^a)\text{ as }u\to0
\end{equation}
for some constant $0\le a\le\infty$. 
Here, we write $f(u)=\Theta(g(u))$ as $u\to0$ to indicate that there exist $c_1>0$, $c_2>0$, and $u_0\in(0,1)$ such that $c_1 g(u)\le f(u)\le c_2 g(u)$ for all $u\in(0,u_0]$.
Note also that $u^\infty=0$ and $0^0=1$ by convention. 
Then, we have the following. 
\begin{itemize}
\item Theorem~\ref{thm:CForder}(1) holds when $a<\infty$; 
\item Theorem~\ref{thm:CForder}(2) holds when $a=\infty$; 
\item Theorem~\ref{thm:CForder2} holds when $\Psi_k(u)=o(\Phi_k(u))\text{ as }u\to0$. 
\end{itemize}
In particular, we have the following result.
\begin{cor}\label{cor:main}
Suppose that $\Phi_k$ satisfies \eqref{eq:typical_example} with $0\le a<\infty$ and $\Psi_k(u)=o(\Phi_k(u))\text{ as }u\to0$. Then, for each $T>0$, 
\[
\E[(L_k(\cX_n))_T]\asymp n^{k+1-a}.
\]
Moreover, if $\int_0^\infty t^{1+\dl}\,dq_{k+1}(t)<\infty$ for some $\dl>0$, then $\E[L_k(\cX_n)]\asymp n^{k+1-a}$.
\end{cor}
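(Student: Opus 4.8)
The plan is to obtain the corollary directly from Theorems~\ref{thm:CForder}(1) and~\ref{thm:CForder2}; the only work is to check that the hypotheses of those theorems follow from \eqref{eq:typical_example} together with $\Psi_k(u)=o(\Phi_k(u))$, and then to insert the bound $c_1n^{-a}\le\Phi_k(1/n)\le c_2n^{-a}$, valid once $1/n\le u_0$. Throughout I fix $c_1,c_2>0$ and $u_0\in(0,1)$ with $c_1u^a\le\Phi_k(u)\le c_2u^a$ on $(0,u_0]$ (necessarily $c_1\le c_2$), and note that $\Psi_k=o(\Phi_k)$ with $\Phi_k$ bounded near $0$, so $\Psi_k(0)=\Psi_k(0+)=0$.

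For the upper bound I would verify condition~\eqref{eq:CForder_Phi}: indeed $\Phi_k(u_0)\ge c_1u_0^a>0$, and for $0\le u\le u_0$ we have $\Phi_k(u/2)\ge c_1(u/2)^a\ge(c_1/c_2)2^{-a}\Phi_k(u)$, so \eqref{eq:CForder_Phi} holds with $A=(c_1/c_2)2^{-a}\in(0,1]$. Theorem~\ref{thm:CForder}(1), applied with $m=1$, then bounds $\E[(L_k(\cX_n))_T]$ by $Cn^{k+1}\Phi_k(1/n)(1+Tn^{-1})\le C(1+T)c_2n^{k+1-a}$ for large $n$, so $\E[(L_k(\cX_n))_T]=O(n^{k+1-a})$ for each fixed $T$; and under $\int_0^\infty t^{1+\dl}\,dq_{k+1}(t)<\infty$ the ``moreover'' clause of Theorem~\ref{thm:CForder}(1) gives $\E[L_k(\cX_n)]=O(n^{k+1}\Phi_k(1/n))=O(n^{k+1-a})$.

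For the lower bound I would fix $B:=2$ and some $D\in(0,(k+2)/(4(k+1)))$ and check the hypothesis $\Phi_k(Du)\ge B\Psi_k(u)$ of Theorem~\ref{thm:CForder2} on a small interval $[0,u_0']$: for $0<u\le u_0$ one has $\Phi_k(Du)\ge c_1(Du)^a\ge(c_1D^a/c_2)\Phi_k(u)$, while $\Psi_k(u)=o(\Phi_k(u))$ makes $\Psi_k(u)\le(c_1D^a/(2c_2))\Phi_k(u)$ for all sufficiently small $u$; the endpoint $u=0$ is automatic since $\Psi_k(0)=0$. Theorem~\ref{thm:CForder2} then yields $\E[(L_k(\cX_n))_T]\ge cn^{k+1}\Phi_k(1/n)\ge cc_1n^{k+1-a}$ for $T>\check r_k(0)$ and large $n$, hence $\E[(L_k(\cX_n))_T]=\Om(n^{k+1-a})$; combining with $(L_k(\cX_n))_T\le L_k(\cX_n)$ gives $\E[L_k(\cX_n)]=\Om(n^{k+1-a})$ as well. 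Together with the upper bounds this proves the asserted relations.

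I do not anticipate a genuine difficulty: the proof is bookkeeping on top of the two main theorems of this subsection. The only points requiring a little care are the behavior as $u\to0$ (where, to conclude $\Psi_k(0)=0$ and to handle the power laws, one separates $a>0$, with $\Phi_k(0+)=0$, from $a=0$, with $\Phi_k(0+)>0$) and the fact that the lower estimate of Theorem~\ref{thm:CForder2} is available only for $T>\check r_k(0)$; since $T\mapsto(L_k(\cX_n))_T$ is nondecreasing, this is precisely the range in which the two-sided estimate $\asymp n^{k+1-a}$ is to be read.
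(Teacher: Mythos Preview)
Your proposal is correct and follows exactly the approach the paper intends: the paper itself gives no explicit proof but simply records (in the bullet points preceding the corollary) that the hypotheses of Theorems~\ref{thm:CForder}(1) and~\ref{thm:CForder2} are satisfied under \eqref{eq:typical_example} with $a<\infty$ and $\Psi_k=o(\Phi_k)$, and your write-up supplies precisely the routine verifications left implicit there. Your remark that the lower estimate is only available for $T>\check r_k(0)$ is well taken---the two-sided bound should strictly be read for such $T$, though in all of the paper's applications one has $\check r_k(0)=0$.
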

The case $\Psi_k(u)=\Theta(\Phi_k(u))\text{ as }u\to0$ is sensitive, and we cannot provide a simple answer for it because 
more detailed relations between $\Phi_k$ and $\Psi_k$ affect the asymptotic behaviors of $\E[(L_k(\cX_n))_T]$ and $\E[L_k(\cX_n)]$, and Theorem~\ref{thm:CForder} does not always give proper upper estimates. Here, we just consider the case where $\Psi_k(u)$ is identically $\Phi_k(u)$ for all $u\in[0,1)$. 
\begin{theorem}\label{thm:delicate_case}
If $\Phi_k(u)=\Psi_k(u)$ for all $u\in[0,1)$, then $L_k(\cX_n)=0$ almost surely for all $n\in\N$. 
\end{theorem}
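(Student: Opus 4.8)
The plan is to pass, via the lifetime formula, from $L_k(\cX_n)$ to $\b_k(X_n(t))$ at individual times $t$, and then to show that the hypothesis forces $X_n(t)$ — for almost every fixed $t$ — either to contain no $k$-simplex or to contain the complete $(k+1)$-skeleton on its own vertex set; in either case $\b_k(X_n(t))=0$ almost surely. Fix $n\in\N$. Applying Theorem~\ref{thm:lifetimeformula} pathwise to the right-continuous filtration $\cX_n$ of $K(n)$ and then Tonelli's theorem (the integrand is non-negative) gives $\E[L_k(\cX_n)]=\int_0^\infty\E[\b_k(X_n(t))]\,dt$. Since $L_k(\cX_n)\ge0$, it suffices to prove that $\b_k(X_n(t))=0$ a.s.\ for Lebesgue-almost every $t\ge0$; this is what I would do, and it holds for every $n$.

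The main step converts $\Phi_k=\Psi_k$ into a pointwise statement. For each $u\in[0,1)$, the identity $\Phi_k(u)=\Psi_k(u)$ reads $Q_k(\check r_k(u))=Q_k(\check r_{k-1}(u))$; since $\check r_{k-1}\le\check r_k$ and $Q_k(b)-Q_k(a)=\int_a^b q_k(s)\,ds$ for $a\le b$, this says $\int_{\check r_{k-1}(u)}^{\check r_k(u)}q_k(s)\,ds=0$. Integrating over $u\in[0,1)$ and exchanging the order of integration (again by non-negativity) yields $0=\int_0^\infty q_k(s)\,\mu(s)\,ds$, where $\mu(s):=\bigl|\{u\in[0,1)\mid\check r_{k-1}(u)\le s\le\check r_k(u)\}\bigr|$. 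If $r_{k-1}(s)>r_k(s)$, then for each $u\in(r_k(s),r_{k-1}(s))$ one has $r_{k-1}(s)>u$, hence $\check r_{k-1}(u)\le s$, and $r_k(t)\le r_k(s)<u$ for all $t<s$, hence $\check r_k(u)\ge s$; thus $\mu(s)\ge r_{k-1}(s)-r_k(s)$. It follows that $\int_0^\infty q_k(s)\bigl(r_{k-1}(s)-r_k(s)\bigr)\,ds=0$, and since the integrand is non-negative (recall $r_k\le r_{k-1}$), $q_k(t)\bigl(r_{k-1}(t)-r_k(t)\bigr)=0$ for almost every $t$.

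To finish, I would fix such a $t$ and argue by cases. If $q_k(t)=0$, then $\P(\sg\in X_n(t))=q_k(t)=0$ for every $\sg\in\binom{V}{k+1}$, so a union bound shows $X_n(t)$ has no $k$-simplex a.s., whence $\b_k(X_n(t))=0$ a.s. If $q_k(t)>0$ and $r_{k-1}(t)=r_k(t)$, then taking logarithms in \eqref{eq:parameters} and using $\binom{k+1}{i}-\binom{k}{i}=\binom{k}{i-1}$ turns $r_{k-1}(t)=r_k(t)$ into $\sum_{i=1}^{k+1}\binom{k}{i-1}\log p_i(t)=0$; as every summand is $\le0$, this forces $p_i(t)=1$ for $i=1,\dots,k+1$. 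Then $X_n(t)$ a.s.\ contains every simplex of dimension at most $k+1$ all of whose vertices are present, i.e.\ the complete $(k+1)$-skeleton on the vertex set of $X_n(t)$; since $H_k$ is determined by the chain groups in dimensions $k-1,k,k+1$, and the $(k+1)$-skeleton of a simplex has trivial $k$-th reduced homology, $\b_k(X_n(t))=0$ a.s.\ again. Hence $\E[\b_k(X_n(t))]=0$ for a.e.\ $t$, so $\E[L_k(\cX_n)]=0$ and therefore $L_k(\cX_n)=0$ a.s.

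The delicate point is the second paragraph: passing from the integral identity $\Phi_k=\Psi_k$ to $q_k(r_{k-1}-r_k)\equiv0$ a.e.\ requires handling the generalized inverses $\check r_k$ carefully, and it is cleanest to bound $\mu(s)$ from below by an explicit sub-interval of admissible $u$ rather than evaluating $\mu(s)$. One should also confirm that the degenerate cases (where $X_n(t)$ is empty or has fewer than $k+1$ vertices) are consistent with $\b_k(X_n(t))=0$ under the paper's conventions.
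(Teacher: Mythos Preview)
Your argument is correct. The final case analysis is identical to the paper's, and the reduction to showing $\b_k(X_n(t))=0$ a.s.\ for (almost) every $t$ is the same. The only genuine difference is in how you extract the pointwise consequence from $\Phi_k\equiv\Psi_k$. The paper does not integrate over $u$; instead it invokes the model-specific inequality $q_k(t)\ge (r_{k-1}(t))^{k+1}$, so that on $[\check r_{k-1}(u),\check r_k(u)]$ the integrand $q_k$ is bounded below by $u^{k+1}$, forcing $\check r_{k-1}(u)=\check r_k(u)$ for every $u\in(0,1)$ and hence $r_{k-1}(t)=r_k(t)$ for \emph{every} $t\ge0$. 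Your Fubini route bypasses that inequality entirely and yields the weaker, but sufficient, a.e.\ statement $q_k(t)(r_{k-1}(t)-r_k(t))=0$. Your approach is a touch more robust (it uses nothing about the combinatorics of $q_k$ versus $r_{k-1}$ beyond $r_{k-1}q_{k-1}=q_k$, which you use implicitly when you note that $q_k(t)>0$ forces $r_{k-1}(t)>0$ so that the logarithm step is legitimate); the paper's approach is shorter and gives a pointwise identity rather than an a.e.\ one.
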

Theorems~\ref{thm:HS1} and \ref{thm: order of 1-flag} are  special cases of Corollary~\ref{cor:main}, as will be shown below. 
We can modify the range of the parameter $t$ to $[0,\infty)$ by setting $p_i(t)=1$ for $t\ge1$ and all $i$: in other words, $X_n(t)=K(n)$ for $t\ge1$. This modification does not affect the lifetimes because the one-point set $\{1\}$ is a Lebesgue null set and all dimensional homologies of $K(n)$ vanish. When we are interested in a filtration with parameters in a finite interval, we will make such a modification, if necessary, without explicitly mentioning it.
\begin{example}\label{ex:LMprocess}
Let $n>d \ge 1$ be fixed and define $\p(t) = (p_0(t), \ldots, p_{n-1}(t))$ by
\[
p_i(t) := \begin{cases}
1		& (0\le i \le d-1), \\
t 		& (i = d), \\
0 		& (d+1 \le i \le n-1)
\end{cases}
\quad\text{for }0\le t\le1.
\]
The corresponding process $\cK_n^{(d)} = \{K_n^{(d)}(t)\}_{0 \le t \le 1}$ is called the $d$-Linial--Meshulam complex process. This is a higher-dimensional analogue of the Erd\H{o}s--R\'{e}nyi graph process $\cK_n=\{K_n(t)\}_{0\le t\le1}$, which is identified with the $1$-Linial--Meshulam complex process. 
We can easily confirm that
\[
(\Phi_k(u),\Psi_k(u))=\begin{cases}
(0,0)	& (k<d-1), \\
(u,0)=(u^1,u^\infty)  & (k = d-1), \\
(1/2,u^2/2)=(\Theta(u^0),\Theta(u^2))	& (k = d), \\
(0,0)	& (k>d). 
\end{cases}
\]
From Corollary~\ref{cor:main} and Theorem~\ref{thm:delicate_case}, we have
\[
\E[L_k(\cK_n^{(d)})] \asymp \begin{cases}
0				& (k \neq d-1, d), \\
n^{d-1} 		& (k = d-1), \\
n^{d+1} 		& (k = d). 
\end{cases}
\]
The case $k=d-1$ corresponds to Theorem~\ref{thm:HS1}.
\end{example}
\begin{example}\label{ex:CLprocess}
Let $n>d \ge 1$ be fixed and define $\p(t) = (p_0(t), \ldots, p_{n-1}(t))$ by
\[
p_i(t) := \begin{cases}
1		& (0\le i \le d-1), \\
t 		& (i = d), \\
1 		& (d+1 \le i \le n-1)
\end{cases}
\quad\text{for }0\le t\le1.
\]
We call the corresponding process $\cC_n^{(d)} = \{C_n^{(d)}(t)\}_{0 \le t \le 1}$ the $d$-flag complex process. 
Note that the case $d=1$ corresponds to the random clique complex process $\cC_n = \{C_n(t)\}_{0 \le t \le 1}$.

By straightforward computation, we have
\[
(\Phi_k(u),\Psi_k(u))=\begin{cases}
(0,0)	& (k<d-1), \\
(u,0)=(u^1,u^\infty)	& (k=d-1), \\
\biggl(\Theta\Bigl(u^{\frac{k+1-d}{d+1}+\binom{k+1}d^{-1}}\Bigr),\Theta\Bigl(u^{\frac{k+1}{d+1}+\binom kd^{-1}}\Bigr)\biggr) 	& (k \ge d). 
\end{cases}
\]
From Corollary~\ref{cor:main} and Theorem~\ref{thm:delicate_case}, we have
\[
\E[L_k(\cC^{(d)}_n)] \asymp \begin{cases}
0 & (k<d-1),\\
n^{\frac{(k+2)d}{d+1}-\binom{k+1}{d}^{-1}} &(k\ge d-1).
\end{cases}
\]
The case $d=1$ corresponds to Theorem~\ref{thm: order of 1-flag}.
\end{example}
\begin{proof}[Proof of Theorem~\ref{thm:CForder}]
(1) From \eqref{eq:CForder_Phi}, for any $j\in\N$,
\[
\Phi_k(u/2^j)\ge A^j\Phi_k(u)
\quad\text{for }u\in[0,u_0].
\]
Thus, the following estimate holds for $0\le K\le 1$.
\begin{equation}\label{eq:CForder_estimate1}
\Phi_k(K u) \ge A K^{\gm}\Phi_k(u)
\quad\text{for }u\in[0,u_0],
\end{equation}
where $\gm=-\log_2 A\ge0$.
In particular, we have
\begin{equation}\label{eq:CForder_estimate2}
\Phi_k(v)\ge A\Phi_k(u_0)v^{\gm} 
\quad\text{for }v\in[0,u_0] 
\end{equation}
by letting $u=u_0$ and $K=v/u_0$ in \eqref{eq:CForder_estimate1}. 
Let $m\in\N$. Take $l\in\N$ such that $l\ge\gm+m$.
Theorem~\ref{thm:CFdecayBetti} implies that there exists some $C\ge0$ such that for all $n$,
\[
\E[\b_k(X_n(t))]
\le n^{k+1}q_k(t)\{1\wedge C(nr_k(t))^{-l}\}.
\]
Let $T>0$. In what follows, $n\in\N$ is taken to be sufficiently large and independent of $T$. 
Define $S_n=\check r_k(1/n)$.
Then, 
\[
\int_0^{S_n\wedge T} \E[\b_k(X_n(t))]\,dt\le n^{k+1}\Phi_k(1/n). 
\]
Next, suppose $t> S_n$. Then, $r_k(t)\ge r_k(S_n)\ge1/n$. Let $\Xi(t)=-(nr_k(t))^{-l}$, which is a right-continuous nondecreasing function. 
Then, denoting $\check r_k(u_0)$ by $t_0$, 
\begin{align*}
&\int_{S_n\wedge T}^T\E[\b_k(X_n(t))]\,dt\\
&=\left(\int_{(S_n\wedge T,t_0\wedge T]}+\int_{(t_0\wedge T,T]}\right)\E[\b_k(X_n(t))]\,dt\\
&\le -Cn^{k+1}\int_{(S_n,t_0]} Q_k'(t)\Xi(t)\,dt+T C n^{k+1}q_k(T)(n r_k(t_0))^{-l}\\
&=-C n^{k+1}\left(\left[Q_k(t)\Xi(t)\right]_{t=S_n}^{t=t_0}-\int_{(S_n,t_0]} Q_k(t)\,d\Xi(t)\right)+T C n^{k+1}q_k(T)(n r_k(t_0))^{-l}\\
&\le C n^{k+1}\left(\Phi_k(u_0)(n u_0)^{-l}+\lim_{\eps\downarrow0}\int_{(S_n+\eps,t_0]} Q_k(t-\eps)\,d\Xi(t)+ T (n u_0)^{-l} \right).
\end{align*}
By noting that $\Phi_k(u_0)>0$, 
from \eqref{eq:CForder_estimate2}, we have
\[
(\Phi_k(u_0)+T)(n u_0)^{-l}\le\frac{\Phi_k(u_0)+T}{Au_0^l\Phi_k(u_0)}n^{-m}\Phi_k(1/n) \le C' (1+T n^{-m})\Phi_k(1/n), 
\]
where $C'>0$ is a constant independent of $n$ and $T$. 
Writing $\check\Xi(u)=\inf\{t\ge0\mid \Xi(t)>u\}$ for $u\in\R$ and taking a small $\eps>0$, we have
\begin{align*}
\int_{(S_n+\eps,t_0]} Q_k(t-\eps)\,d\Xi(t)
&\le\int_{(S_n+\eps,t_0]} \Phi_k(r_k(t-\eps))\,d\Xi(t)\\
&\le \int_{(S_n+\eps,t_0]}\frac1A (nr_k(t-\eps))^{\gm}\Phi_k(1/n)\,d\Xi(t)\\
&= \frac1A \Phi_k(1/n)\int_{(S_n+\eps,t_0]}(-\Xi(t-\eps))^{-\gm/l}\,d\Xi(t)\\
&= \frac1A \Phi_k(1/n)\int_{(\Xi(S_n+\eps),\Xi(t_0)]}(-\Xi(\check\Xi(u)-\eps))^{-\gm/l}\,du\\
&\le \frac1A \Phi_k(1/n)\int_{\Xi(S_n+\eps)}^{\Xi(t_0)}(-u)^{-\gm/l}\,du\\
&=\frac1A \Phi_k(1/n)\cdot\frac{-1}{1-\gm/l}\{(-\Xi(t_0))^{1-\gm/l}-(-\Xi(S_n+\eps))^{1-\gm/l}\}\\
&\le \frac1{A(1-\gm/l)}\Phi_k(1/n)(n r_k(S_n))^{\gm-l}\\
&\le \frac1{A(1-\gm/l)}\Phi_k(1/n).
\end{align*}
Here, we used \eqref{eq:CForder_estimate1} with $K=1/(n r_k(t-\eps))$ and $u=r_k(t-\eps)$ in the second line, noting that $K\le1$ and $u\le u_0$ for $t\in(S_n+\eps,t_0]$. In the fourth line, we used the change of variable formula with $t=\check\Xi(u)$.
Thus, 
\[
\int_{S_n\wedge T}^T\E[\b_k(X_n(t))]\,dt\le C\left(C'+\frac1{A(1-\gm/l)}\right)n^{k+1}\Phi_k(1/n)(1+T n^{-m})
\]
and the first result given in Theorem~\ref{thm:CForder}~(1) follows.

Now, suppose that $M:=\int_0^\infty t^{1+\dl}\,dq_{k+1}(t)<\infty$ for some $\dl>0$. 
Let $w_\sg=\max\{u_\tau\mid\emptyset\neq\tau\subset\sg\}$ for $\sg\in K(n)$ and define $U_n=\max\left\{w_\sg\relmiddle|\sg\in\binom V{k+2}\right\}$. The distribution function of $w_\sg$ for $\sg\in\binom V{k+2}$ is equal to $q_{k+1}(\cdot)$. 
Take $m\in\N$ such that $m\ge(\gm+k+2)/\dl$. 
From the first conclusion with this $m$, there exists a constant $C\ge0$ such that, for sufficiently large $n$, 
\[
\E[(L_k(\cX_n))_{n^m}] \le C n^{k+1}\Phi_k(1/n)
\]
by letting $T=n^m$. Then, 
\begin{align*}
\E[L_k(\cX_n)]
&= \E\biggl[\int_0^{U_n}\b_k(X_n(t))\,dt\biggr]\\
&= \E\biggl[\int_0^{U_n}\b_k(X_n(t))\,dt\,; U_n\le n^m\biggr]+\E\biggl[\int_0^{U_n}\b_k(X_n(t))\,dt\,; U_n> n^m\biggr]\\
&\le \E[(L_k(\cX_n))_{n^m}]+n^{k+1}\E[U_n\,; U_n> n^m]\\
&\le C n^{k+1}\Phi_k(1/n)+n^{k+1}n^{-m\dl}\E[U_n^{1+\dl}]. 
\end{align*}
For the second term, we have
\begin{align*}
n^{-m\dl}\E[U_n^{1+\dl}]
&\le n^{-m\dl}\E\left[\sum_{\sg\in\binom V{k+2}}w_\sg^{1+\dl}\right]\\
&\le n^{-(\gm+k+2)} \binom n{k+2}\int_0^\infty t^{1+\dl}\,dq_{k+1}(t)\\
&\le M n^{-\gm}\\
&\le \frac M{A\Phi_k(u_0)}\Phi_k(1/n). \quad\text{(from \eqref{eq:CForder_estimate2})}
\end{align*}
The final conclusion in part (1) of the theorem follows by combining these estimates. 

(2) Let $l\in\N$ and $\eps_n=n^{-1/2}$. From the assumption, there exist $c>0$ and $u_0\in(0,1)$ such that $\Phi_k(u)\le c u^l$ for all $u\in(0,u_0]$. Moreover, $r_k(t)\ge \eps_n$ for $t\ge \check r_k(\eps_n)$. 
Thus, from Theorem~\ref{thm:CFdecayBetti}, for sufficiently large $n$, 
\begin{align*}
\E[(L_k(\cX_n))_T]
&=\int_0^{\check r_k(\eps_n)\wedge T} \E[\b_k(X_n(t))]\,dt + \int_{\check r_k(\eps_n)\wedge T}^T \E[\b_k(X_n(t))]\,dt\\
&\le \binom n{k+1}\Phi_k(\eps_n) + C n^{k+1}\int_{\check r_k(\eps_n)\wedge T}^T q_k(t)(nr_k(t))^{-l}\,dt\\
&\le c n^{k+1-l/2} + C n^{k+1-l/2}\int_0^T q_k(t)\,dt\\
&= (c+CT)n^{k+1-l/2}
\end{align*}
for $T>0$. 

The second conclusion of part (2) of the theorem follows in the same manner as that of part (1).  
\end{proof}
\begin{proof}[Proof of Theorem~\ref{thm:CForder2}]
Let $T>\check r_k(0)$ and $\a=k/2 +1$. Define
\[
\tilde S_n=\check r_{k-1}(\a/(Dn))\text{ and }
\tilde T_n=\check r_k(\a/n)
\]
for large enough $n$ such that $\a/(Dn)\le u_0$ and $\tilde T_n\le T$. 

Suppose $\tilde S_n\ge \tilde T_n$ for some $n$. 
Then, $\Psi_k(\a/(Dn))=Q_k(\tilde S_n)\ge Q_k(\tilde T_n)=\Phi_k(\a/n)$, while $\Phi_k(\a/n)\ge B \Psi_k(\a/(Dn))$ by assumption.
Therefore, $\Phi_k(\a/n)=0$, which implies $\Phi_k(u)=0$ for $u\in[0,\a/n]$. In this case, the conclusion is trivially true.

Thus, we may assume that $\tilde S_n<\tilde T_n$ for every $n$. If $t\in[\tilde S_n,\tilde T_n)$, then
\[
r_{k-1}(t)\ge r_{k-1}(\tilde S_n) \ge \a/(Dn)
\]
and 
\[
r_k(t)\le \lim_{\eps\downarrow0}r_k(\tilde T_n-\eps)\le \a/n.
\]
Since
\[
\frac{(k+1)D}\a+\frac\a{k+2}=\frac{2(k+1)D}{k+2}+\frac12<1,
\]
we can apply Proposition~\ref{prop:byMorse} to obtain the existence of $n_0\in\N$ and $\eps_0>0$ such that, if $n\ge n_0$, then
\[
\E[\b_k(X_n(t))]\ge\eps_0 n^{k+1}q_k(t)\quad\text{for $t\in[\tilde S_n,\tilde T_n)$. }
\]
Thus, for $n\ge n_0$, 
\begin{align*}
\E[(L_k(\cX_n))_T]
&\ge\int_{\tilde S_n}^{\tilde T_n}\E[\b_k(X_n(t))]\,dt\\
&\ge\eps_0 n^{k+1}\int_{\tilde S_n}^{\tilde T_n}q_k(t)\,dt\\
&=\eps_0 n^{k+1}(\Phi_k(\a/n)-\Psi_k(\a/(Dn)))\\
&\ge\eps_0 n^{k+1}\left(\Phi_k(\a/n)-B^{-1}\Phi_k(\a/n)\right)\\
&\ge\eps_0(1-B^{-1})n^{k+1}\Phi_k(1/n).\myqedhere
\end{align*}
\end{proof}
\begin{proof}[Proof of Theorem~\ref{thm:delicate_case}]
Note that for any $0\le k\le n-1$ and $t\ge0$, 
\[
q_k(t)\ge\prod_{i=0}^{k} (p_i(t))^{(i+1)\binom{k+1}{i+1}}=\prod_{i=0}^{k} (p_i(t))^{(k+1)\binom ki}=(r_{k-1}(t))^{k+1}. 
\]
Then, for $u\in(0,1)$ and $t\ge\check r_{k-1}(u)$, the inequality $q_k(t)\ge (r_{k-1}(t))^{k+1}\ge u^{k+1}$ holds.
Thus, 
\[
0=\Phi_k(u)-\Psi_k(u)=\int_{\check r_{k-1}(u)}^{\check r_k(u)}q_k(t)\,dt
\ge u^{k+1}(\check r_k(u)-\check r_{k-1}(u)). 
\]
Therefore, 
$\check r_{k-1}(u)= \check r_k(u)$ for $u\in(0,1)$, which implies that $r_{k-1}(t)= r_k(t)$ for $t\ge0$ from the (weak) monotonicity and right-continuity of $r_k$. 
Therefore, for $t\ge0$, 
\begin{equation}\label{eq:delicate_case}
0=r_{k-1}(t)-r_k(t)=r_{k-1}(t)\left(1-\prod_{i=1}^{k+1}p_i(t)^{\binom k{i-1}}\right)\ge q_k(t)\left(1-\prod_{i=1}^{k+1}p_i(t)^{\binom k{i-1}}\right). 
\end{equation}
Here, in the inequality above, we used the relation $r_{k-1}(t)\ge r_{k-1}(t)q_{k-1}(t)=q_k(t)$. 
Let $t\ge0$. 
\eqref{eq:delicate_case} implies that $q_k(t)=0$ or $p_i(t)=1$ for all $i=1,2,\dots,k+1$. If $q_k(t)=0$, then $\b_k(X_n(t))=0$ almost surely. Suppose that $p_i(t)=1$ for all $i=1,2,\dots,k+1$.
Then, $\b_k(X_n(t))=0$ almost surely since $X_n(t)$ includes the complete $(k+1)$-dimensional skeleton. 
Thus, in all cases, $\b_k(X_n(t))=0$ almost surely. This implies $L_k(\cX_n)=0$ a.s.
\end{proof}
\subsection{Limiting constants}
As a refinement of Theorems~\ref{thm:CForder} and \ref{thm:CForder2}, 
it is natural to consider the behavior of the normalized $k$-th lifetime sum
\[
\overline{L}_k(\cX_n) := \frac{L_k(\cX_n)}{n^{k+1}\Phi_k(1/n)}
\]
of a filtration $\cX_n$. It is not yet known what general conditions are needed for $\overline{L}_k(\cX_n)$ to converge in a certain sense as $n\to\infty$. In the case where $\cX_n$ is the $d$-Linial--Meshulam complex process $\cK_n^{(d)}=\{K_n^{(d)}(t)\}_{0\le t\le1}$, Hiraoka and Shirai~\cite[Section~7.1]{HS} made a formal argument and conjectured that the expectation of $\overline{L}_{d-1}(\cK_n^{(d)})$ converges to some positive constant $I_{d-1}$. 
Their argument was based on recent work by Linial and Peled~\cite{LP} on the convergence of $K_n^{(d)}(c/n)$ for fixed $c \ge 0$. 
We justify their argument and prove Theorem~\ref{thm:LMlimiting} in a more general form by using the upper estimate in Example~\ref{ex:LM}. 

One of the special features of $\ph_{d-1}(\cK_n^{(d)})$ is that all the birth times $b_i$ in \eqref{eq:structure} are zero because $K_n^{(d)}(0)$ is the complete $(d-1)$-dimensional simplicial complex.
Given this, we can obtain a formula for the generalized sums of lifetimes as follows. 
Let $d_i$ $(i=1,\dots,p+q)$ be the death times in \eqref{eq:structure} for the $k$-th persistent homology of a general filtration $\cX=\{X(t)\}_{t\ge0}$.
\begin{prop}\label{prop:lifetime_formula2}
Let $\varphi$ be a right-continuous nondecreasing function on $[0,\infty)$ with $\varphi(0)=0$. Suppose that $b_i=0$ for $i=1,\ldots,p+q$. Then,
\[
\sum_{i=1}^{p+q}\varphi(d_i-)=\int_{[0,\infty)} \b_k(X(t))\,d \varphi(t),
\]
where $\varphi(t-)=\lim_{\eps\downarrow0}\varphi(t-\eps)$.
\end{prop}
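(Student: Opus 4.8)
The plan is to reduce the claimed identity to the ordinary lifetime formula (Theorem~\ref{thm:lifetimeformula}) by a change-of-variables argument, exploiting the structure theorem \eqref{eq:structure} together with the hypothesis $b_i=0$. Since all birth times vanish, the structure theorem gives
\[
\ph_k(\cX)\simeq\bigoplus_{i=1}^{p}\bigl((z^0)/(z^{d_i})\bigr)\oplus\bigoplus_{i=p+1}^{p+q}(z^0),
\]
so that for each $t\ge0$ the Betti number $\b_k(X(t))=\dim H_k(X(t))$ counts exactly the indices $i$ with $d_i>t$ (with $d_i=\infty$ for $i>p$). In other words, $\b_k(X(t))=\sum_{i=1}^{p+q}\mathbf 1_{\{d_i>t\}}$ for every $t\ge0$. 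This pointwise identity is the engine of the proof, and it is worth stating and verifying carefully first, since everything else is a Fubini/Stieltjes manipulation.

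Granting this, I would write
\[
\int_{[0,\infty)}\b_k(X(t))\,d\varphi(t)
=\int_{[0,\infty)}\sum_{i=1}^{p+q}\mathbf 1_{\{d_i>t\}}\,d\varphi(t)
=\sum_{i=1}^{p+q}\int_{[0,\infty)}\mathbf 1_{\{t<d_i\}}\,d\varphi(t),
\]
the interchange of the finite sum and the integral being immediate. For each $i$ with $d_i<\infty$, the set $\{t\ge0\mid t<d_i\}$ is the interval $[0,d_i)$, and by the standard evaluation of a Lebesgue--Stieltjes measure on a half-open interval,
\[
\int_{[0,\infty)}\mathbf 1_{[0,d_i)}(t)\,d\varphi(t)=\varphi(d_i-)-\varphi(0)=\varphi(d_i-),
\]
using $\varphi(0)=0$; here right-continuity of $\varphi$ guarantees that the $d\varphi$-measure of $[0,d_i)$ is $\varphi(d_i-)-\varphi(0)$ rather than $\varphi(d_i)-\varphi(0)$. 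For $i>p$ we have $d_i=\infty$, so $\mathbf 1_{\{t<d_i\}}\equiv1$ and the integral is $\lim_{s\to\infty}\varphi(s)-\varphi(0)$; consistently with the convention $d_i=\infty$ we interpret $\varphi(\infty-):=\lim_{s\to\infty}\varphi(s)$, so the same formula $\varphi(d_i-)$ holds. Summing over $i$ gives $\sum_{i=1}^{p+q}\varphi(d_i-)$, as desired.

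The only genuine subtlety — and the step I would treat most carefully — is the boundary behavior at $t=d_i$, i.e.\ the appearance of the left limit $\varphi(d_i-)$ rather than $\varphi(d_i)$. This comes down to whether the structure theorem gives $\b_k(X(d_i))$ counting $d_i$ as still "alive" or already "dead" at its death time. The right-continuity convention on the filtration ($X(t)=\bigcap_{t'>t}X(t')$) forces a class to be dead at its death time, so $\b_k(X(t))=\#\{i\mid d_i>t\}$ with a strict inequality, which is exactly what produces $\varphi(d_i-)$; I would make this explicit by noting that $X$ is finite, so $\b_k(X(\cdot))$ is a right-continuous step function with finitely many jumps, and the jump down at each $d_i$ is by the multiplicity of $d_i$ among the death times. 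Everything else — the interchange of sum and integral, the evaluation of $\int\mathbf 1_{[0,d_i)}\,d\varphi$ — is routine, as is the observation that the case $\varphi(t)=t\wedge T$ (resp.\ $\varphi(t)=t$) recovers $(L_k(\cX))_T$ (resp.\ $L_k(\cX)$) and Theorem~\ref{thm:lifetimeformula}.
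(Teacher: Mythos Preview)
Your proof is correct and follows essentially the same route as the paper's: both arguments rest on the identity $\b_k(X(t))=\sum_{i=1}^{p+q}\mathbf 1_{[0,d_i)}(t)$ together with the evaluation $\int_{[0,d_i)}d\varphi=\varphi(d_i-)$ and a trivial interchange of finite sum and integral. The paper writes the three-line computation from left to right without comment, whereas you go right to left and spell out the justification of the strict inequality $d_i>t$ via right-continuity of the filtration; your opening remark about reducing to Theorem~\ref{thm:lifetimeformula} via change of variables is a slight misdescription of what you actually do, but the proof itself is the same.
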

\begin{proof}
This is proved by simple calculation:
\begin{align*}
\sum_{i=1}^{p+q}\varphi(d_i-)
&=\sum_{i=1}^{p+q}\int_0^\infty 1_{[0,d_i)}(t)\,d\varphi(t)\\
&=\int_0^\infty\left(\sum_{i=1}^{p+q}1_{[0,d_i)}(t)\right)d\varphi(t)\\
&=\int_0^\infty\b_k(X(t))\,d\varphi(t).\myqedhere
\end{align*}
\end{proof}
Let $\a>0$ and $d\in\N$. We consider the $d$-Linial--Meshulam complex process $\cK_n^{(d)}$ and define
\[
  L_{d-1}^{(\a)}(\cK_n^{(d)})=\sum_{i=1}^{p+q}{d_i}^\a,
\]
which is the sum of the $\a$-th powers of the $(d-1)$-th lifetimes of $\cK_n^{(d)}$. Clearly, $L_{d-1}^{(1)}(\cK_n^{(d)})=L_{d-1}(\cK_n^{(d)})$. From Proposition~\ref{prop:lifetime_formula2}, 
\[
L_{d-1}^{(\a)}(\cK_n^{(d)})=\a\int_0^1 \b_{d-1}(K_n^{(d)}(t)) t^{\a-1}\,dt.
\]
Below, we study the precise asymptotic behavior of $L_{d-1}^{(\a)}(\cK_n^{(d)})$ as $n\to\infty$.

We recall some results in~\cite{LP}. 
For $d \ge 2$, let $t_d^*$ be the unique root in $(0, 1)$ of the equation
\[
(d+1)(1-t) + (1+dt)\log t = 0, 
\]
and define the constant $c_d^* = \psi_d(t_d^*)>0$, 
where
\[
\psi_d(t) = \frac{-\log t}{(1-t)^d},\quad t\in (0, 1). 
\] 
For $d=1$, define $t_1^* = c_1^* = 1$. For $c \ge c_d^*$, let $t_c$ denote the smallest positive root of the equation $\psi_d(t) = c$. Note that $t_c \le t_d^*$. 
\begin{theorem}
For $c\ge0$,
\begin{equation}\label{eq:LM_E}
\lim_{n\to\infty}\frac{\E[\b_d(K_n^{(d)}(c/n))]}{\binom{n}{d}}=g_d(c), 
\end{equation}
where
\[
g_d(c)=
\begin{cases}
0		&(c < c_d^*), \\
ct_c(1-t_c)^d + \frac{c}{d+1}(1-t_c)^{d+1} - (1-t_c)	&(c \ge c_d^*).
\end{cases}	
\]
Moreover, for any $\eps>0$, 
\begin{equation}\label{eq:LM_P}
\lim_{n\to\infty}\P\left[\Bigl|\frac{\b_d(K_n^{(d)}(c/n))}{\binom{n}{d}}-g_d(c)\Bigr|>\eps\right]=0. 
\end{equation}
\end{theorem}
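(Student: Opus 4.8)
The statement is the theorem of Linial and Peled~\cite{LP}, so the plan is to follow their strategy. The first step is an exact reduction. Since $K_n^{(d)}(p)$ carries no simplices of dimension exceeding $d$, we have $H_d(K_n^{(d)}(p))=\ker\partial_d$, hence $\b_d=f_d-\rank\partial_d$, where $f_d$ is the number of $d$-simplices and $\partial_d$ is the real $d$-th boundary operator. Taking expectations, $\E[\b_d]=\binom{n}{d+1}p-\E[\rank\partial_d]$, and since $\binom{n}{d+1}(c/n)/\binom nd\to c/(d+1)$, the whole problem reduces to evaluating $\lim_{n\to\infty}\E[\rank\partial_d(K_n^{(d)}(c/n))]/\binom nd$. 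Equivalently, via $\rank\partial_d=\binom{n-1}{d}-\b_{d-1}$ (the cycle space $Z_{d-1}$ is that of the complete $(d-1)$-skeleton, of dimension $\binom{n-1}{d}$), this amounts to pinning down the limiting normalized value of $\b_{d-1}$.

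Second, I would bring the local structure of the complex under control by collapsing $K_n^{(d)}(c/n)$ to its $d$-core $\widehat Y$: as long as some $(d-1)$-simplex is a free face, that is, contained in exactly one remaining $d$-simplex, delete it together with that $d$-simplex. Elementary collapses preserve homotopy type, so $\b_d(\widehat Y)=\b_d(K_n^{(d)}(c/n))$, and the rank of $\partial_d$ is governed by $\widehat Y$ together with the explicit, lower-order count of simplices removed in dimensions at most $d-1$. This is the exact $d$-dimensional analogue of peeling a graph down to its $2$-core, and for $d=1$ it reproduces the classical description of $\b_1(G(n,c/n))$ through the size of the $2$-core.

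Third, and this is the heart of the argument, I would analyze the peeling together with the companion \emph{shadow} growth that records when an inserted $d$-simplex raises $\rank\partial_d$ via local weak convergence. Around a uniformly chosen $(d-1)$-simplex, $K_n^{(d)}(c/n)$ converges in the Benjamini--Schramm sense to a limiting infinite rooted object built from a two-type Poisson branching process: a $(d-1)$-simplex lies in $\bin(n-d,c/n)\approx\mathrm{Poisson}(c)$ many $d$-simplices, and each $d$-simplex contributes $d$ further $(d-1)$-faces. Writing $b$ for the probability that a face ``collapses away'' (all of its supporting $d$-simplices below it are removed) and $1-(1-b)^d$ for the probability that a $d$-simplex is peeled through one of its faces, self-consistency forces
\[
-\log b=c(1-b)^d,\qquad\text{that is,}\qquad \psi_d(b)=c,
\]
whose smallest root is exactly $t_c$; moreover $c_d^*$ is precisely the value of $c$ below which the process dies out almost surely, forcing $\widehat Y$ to be essentially empty and $\b_d$ to vanish to leading order. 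Feeding $t_c$ back in yields the limiting density of $d$-simplices surviving in the core and the limiting rank density, and a lengthy but mechanical computation—the one assembling the combination $c t_c(1-t_c)^d+\frac{c}{d+1}(1-t_c)^{d+1}-(1-t_c)$—then identifies the limit in~\eqref{eq:LM_E} as $g_d(c)$. The genuine difficulty is making this rigorous: upgrading the branching-process heuristic to an honest law of large numbers for the rank (for instance by the differential-equation/Wormald method for the peeling, or by the theory of ranks of incidence operators of locally tree-like sparse random structures), and controlling the ``giant'' contribution of the core through second-moment and connectivity estimates. This is the technical core of~\cite{LP}, and where I expect the main obstacle to lie.

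Finally, the convergence in probability~\eqref{eq:LM_P} follows from~\eqref{eq:LM_E} by concentration. The random variable $\b_d$ is a function of the independent indicators $\{1_{\{\sigma\in K_n^{(d)}(c/n)\}}\}_{\sigma\in\binom{V}{d+1}}$ with bounded differences: flipping one $d$-simplex changes $f_d$ by $1$ and $\rank\partial_d$ by at most $1$, hence $\b_d$ by at most $1$. Thus McDiarmid's inequality gives $|\b_d-\E[\b_d]|=O((\binom{n}{d+1}\log n)^{1/2})=o(n^d)$ with high probability when $d\ge2$; for $d=1$ the claim is the classical concentration of $\b_1$ for the Erd\H{o}s--R\'{e}nyi graph. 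Combined with $\E[\b_d]/\binom nd\to g_d(c)$ from~\eqref{eq:LM_E}, this yields $\b_d/\binom nd\to g_d(c)$ in probability.
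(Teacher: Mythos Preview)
Your proposal and the paper's proof take entirely different routes. The paper does not attempt to reprove the Linial--Peled theorem; it simply invokes \cite{LP} as a black box for every $c\ne c_d^*$, and its only contribution is a short argument filling in the critical point $c=c_d^*$, which \cite{LP} does not directly cover. That argument exploits the monotonicity of $t\mapsto\b_d(K_n^{(d)}(t))$ together with the continuity of $g_d$ and the fact that $g_d(c_d^*)=0$: for any $\eps>0$, choose $c'>c_d^*$ with $g_d(c')\le\eps/2$, and dominate the probability at $c_d^*$ by the corresponding probability at $c'$, which tends to zero by the already-cited result. The expectation claim at $c=c_d^*$ is handled the same way.

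Your sketch, by contrast, outlines the internals of \cite{LP} itself---collapsing to the $d$-core, local weak convergence to a Poisson branching structure, the fixed-point equation $\psi_d(b)=c$, and McDiarmid for concentration. This is a reasonable summary of that paper's strategy, but it is vastly more work than what is called for here, and it inherits the same limitation: the local-weak-limit and peeling analysis in \cite{LP} is carried out away from the critical parameter, so your outline does not automatically cover $c=c_d^*$. You would still need an additional step there, and the cheapest one is precisely the monotonicity/continuity sandwich the paper supplies. Your bounded-differences concentration argument is fine for $d\ge2$ (with the $d=1$ caveat you already flag), but it too is unnecessary once \cite{LP} is taken as given, since convergence in probability is part of what that reference establishes.
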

\begin{proof}
The claim for $c\ne c_d^*$ follows from the results in \cite{LP}.
When $c=c_d^*$, the assertion follows from the monotonicity of $\b_d(K_n^{(d)}(\cdot))$ and the continuity of $g_d$ with $g_d(c_d^*)=0$.
Indeed, for $\eps>0$, take $c'>c_d^*$ such that $g_d(c')\le\eps/2$. Then, 
\begin{align*}
\P\left[\frac{\b_d(K_n^{(d)}(c_d^*/n))}{\binom{n}{d}}>\eps\right]
&\le \P\left[\frac{\b_d(K_n^{(d)}(c_d^*/n))}{\binom{n}{d}}>\eps/2+g_d(c')\right]\\
&\le \P\left[\frac{\b_d(K_n^{(d)}(c'/n))}{\binom{n}{d}}-g_d(c')>\eps/2\right]. 
\end{align*}
Since the last term converges to $0$ as $n\to\infty$, we obtain \eqref{eq:LM_P} for $c=c_d^*$. 
The proof of \eqref{eq:LM_E} with $c=c_d^*$ is similar.
\end{proof}
Since
\[
\b_d(K_n^{(d)}(t)) - \b_{d-1}(K_n^{(d)}(t)) = f_d(K_n^{(d)}(t)) - \binom{n-1}{d}
\]
by the Euler--Poincar\'{e} formula, we have
\[
\frac{\b_{d-1}(K_n^{(d)}(c/n))}{\binom{n}{d}} = \frac{1}{\binom{n}{d}}\left(\b_d(K_n^{(d)}(c/n)) + \binom{n-1}{d} - f_d(K_n^{(d)}(c/n))\right). 
\]
Write
\[
Z_n := \frac{f_d(K_n^{(d)}(c/n))}{\binom{n}{d}}. 
\]
Then, $\E[Z_n]=\frac{c}{d+1}(1-d/n)$. Note that $f_d(K_n^{(d)}(c/n))\sim\bin\bigl(\binom n{d+1}, c/n\bigr)$ for the $d$-Linial--Meshulam complex. 
Thus, by direct computation, 
\[
\E[(Z_n-\E[Z_n])^2]=\frac c{d+1}\left(1-\frac dn\right)\left(1-\frac cn\right)\binom n d^{-1}\to0 \text{ as }n\to\infty. 
\]
Therefore, we obtain $\lim_{n\to\infty}\E[(Z_n-c/(d+1))^2]=0$.

From these estimates, for each $c\ge0$,
\begin{equation*}
\lim_{n\to\infty}\frac{\E\bigl[\b_{d-1}(K_n^{(d)}(c/n))\bigr]}{\binom{n}{d}} = h_d(c)
\end{equation*}
and, for any $\eps>0$, 
\begin{equation}\label{eq:d-1_Betti_in_pr}
\lim_{n\to\infty}\P\left[\Bigl|\frac{\b_{d-1}(K_n^{(d)}(c/n))}{\binom{n}{d}}-h_d(c)\Bigr|>\eps\right]=0, 
\end{equation}
where
\begin{equation}\label{eq:hdc}
h_d(c):=1 - \frac{c}{d+1} + g_d(c)\ge0. 
\end{equation}
For $\a>0$, define
\begin{equation}\label{eq:I}
I_{d-1}^{(\a)}=\frac{\a}{d!}\int_0^\infty h_d(s)s^{\a-1}\,ds.
\end{equation}
The constant in Theorem~\ref{thm:LMlimiting} is then defined by $I_{d-1}:=I_{d-1}^{(1)}$.
The following theorem is the main result of this subsection. Theorem~\ref{thm:LMlimiting} is a particular case of Theorem~\ref{thm:LMlimiting2} with $\a=1$.
\begin{theorem}\label{thm:LMlimiting2}
Let $d\ge1$ and $\a>0$. Then $I_{d-1}^{(\a)}$ is finite, and for any $r \in[1,\infty)$,
\[
\lim_{n\to\infty}\E\left[\left|\frac{L_{d-1}^{(\a)}(\cK_n^{(d)})}{n^{d-\a}} - I_{d-1}^{(\a)}\right|^r\right]=0.
\]
In particular, $\E[L_{d-1}^{(\a)}(\cK_n^{(d)})]/n^{d-\a}$ converges to $I_{d-1}^{(\a)}$ as $n\to\infty$.
\end{theorem}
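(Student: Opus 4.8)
The plan is to rescale the filtration parameter by $t=c/n$. Starting from the identity $L_{d-1}^{(\a)}(\cK_n^{(d)})=\a\int_0^1 \b_{d-1}(K_n^{(d)}(t))\,t^{\a-1}\,dt$ already obtained via Proposition~\ref{prop:lifetime_formula2}, the change of variables gives
\[
\frac{L_{d-1}^{(\a)}(\cK_n^{(d)})}{n^{d-\a}}=\a\int_0^\infty \widetilde B_n(c)\,c^{\a-1}\,dc,\qquad \widetilde B_n(c):=\frac{\b_{d-1}(K_n^{(d)}(c/n))}{n^d}
\]
with the convention $\widetilde B_n(c):=0$ for $c>n$, whereas $I_{d-1}^{(\a)}=\a\int_0^\infty \frac{h_d(c)}{d!}\,c^{\a-1}\,dc$. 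Note that $0\le\widetilde B_n<1$ and $\widetilde B_n=\binom{n}{d}n^{-d}B_n$ where $B_n(c):=\b_{d-1}(K_n^{(d)}(c/n))/\binom{n}{d}$. Since on a probability space $\|\cdot\|_r\le\|\cdot\|_{\lceil r\rceil}$, it suffices to treat integer $r$. I would first record the finiteness of $I_{d-1}^{(\a)}$: letting $n\to\infty$ in Example~\ref{ex:LM}(1) (with $np=c$) yields $0\le h_d(c)\le d!\{1\wedge C_l c^{-l}\}$ for every $l\in\N$, so $h_d(c)c^{\a-1}$ is dominated near $0$ by $d!\,c^{\a-1}$ (integrable as $\a>0$) and near $\infty$ by $d!\,C_l c^{\a-1-l}$ with $l>\a$, hence integrable.

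Next I would fix a cutoff $M\ge1$ and split $\a\int_0^\infty\widetilde B_n(c) c^{\a-1}\,dc=U_n^M+V_n^M$ and correspondingly $I_{d-1}^{(\a)}=u^M+v^M$, according to $\int_0^M$ and $\int_M^\infty$. The head $U_n^M$ is handled by soft arguments: for fixed $M$, $B_n(c)\to h_d(c)$ in probability by \eqref{eq:d-1_Betti_in_pr}, and since $0\le B_n(c)\le1$ and $0\le h_d(c)\le d!$ we get $\E|B_n(c)-h_d(c)|\to0$ for each $c$; dominated convergence on $[0,M]$ (dominant $(1+d!)c^{\a-1}$) together with $\binom{n}{d}n^{-d}\to 1/d!$ gives $U_n^M\to u^M$ in $L^1$, and since $0\le U_n^M\le M^\a$ uniformly, the convergence upgrades to every $L^r$. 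Also $v^M\to0$ as $M\to\infty$ by the finiteness just established.

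The crux is a tail bound uniform in $n$: for each integer $r\ge1$, $\sup_n \E[(V_n^M)^r]\to0$ as $M\to\infty$. Expanding the $r$-th power as an $r$-fold integral over $[M,\infty)^r$ and applying H\"older's inequality with $r$ equal exponents,
\[
\E[(V_n^M)^r]=\a^r\int_{[M,\infty)^r}\E\Bigl[\prod_{j=1}^r\widetilde B_n(c_j)\Bigr]\prod_{j=1}^r c_j^{\a-1}\,dc_1\cdots dc_r\le\a^r\int_{[M,\infty)^r}\prod_{j=1}^r\bigl(\E[\widetilde B_n(c_j)^r]\bigr)^{1/r}c_j^{\a-1}\,dc_1\cdots dc_r.
\]
Because $0\le\widetilde B_n\le1$ we have $\E[\widetilde B_n(c)^r]\le\E[\widetilde B_n(c)]=n^{-d}\E[\b_{d-1}(K_n^{(d)}(c/n))]\le C_l c^{-l}$ for $c\ge1$ by Example~\ref{ex:LM}(1), where the exponent $l$ is at our disposal; choosing $l>\a r$ makes $\a-1-l/r<-1$, so the integral factorizes into $r$ copies of $\int_M^\infty c^{\a-1-l/r}\,dc=(l/r-\a)^{-1}M^{\a-l/r}$, whence $\E[(V_n^M)^r]\le C_l\,\a^r(l/r-\a)^{-r}M^{r\a-l}\to0$ as $M\to\infty$, uniformly in $n$. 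The main obstacle is precisely this step: the naive estimates — H\"older against the constant $1$, or bounding $\widetilde B_n(c_j)\le1$ for all but one index, or using monotonicity of $\b_{d-1}(K_n^{(d)}(\cdot))$ only at the left endpoint $M$ — all introduce an uncontrolled factor $n^{\Theta(r-1)}$, so one must retain polynomial decay in \emph{every} one of the $r$ variables over the whole range $[M,\infty)$, which is possible exactly because $l$ in Example~\ref{ex:LM}(1) may be taken arbitrarily large. Finally, from $\bigl\|n^{\a-d}L_{d-1}^{(\a)}(\cK_n^{(d)})-I_{d-1}^{(\a)}\bigr\|_r\le\|U_n^M-u^M\|_r+\|V_n^M\|_r+|v^M|$, first letting $n\to\infty$ and then $M\to\infty$ yields the claimed $L^r$ convergence, and the convergence of $\E[L_{d-1}^{(\a)}(\cK_n^{(d)})]/n^{d-\a}$ is the case $r=1$.
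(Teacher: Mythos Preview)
Your proof is correct and uses the same three ingredients as the paper: the change of variables $t=s/n$ together with Proposition~\ref{prop:lifetime_formula2}, the pointwise convergence in probability \eqref{eq:d-1_Betti_in_pr} from Linial--Peled, and the uniform decay estimate of Example~\ref{ex:LM}(1) with freely chosen exponent~$l$. The difference lies only in how the $L^r$ convergence of the integral is extracted from these.

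The paper avoids the head--tail split entirely: it applies Minkowski's integral inequality to obtain
\[
\Bigl\|n^{\a-d}L_{d-1}^{(\a)}(\cK_n^{(d)})-I_{d-1}^{(\a)}\Bigr\|_{L^r}\le \a\int_0^\infty \Bigl\|\widetilde B_n(s)-\tfrac{1}{d!}h_d(s)\Bigr\|_{L^r}s^{\a-1}\,ds,
\]
and then shows the integrand converges to zero pointwise and is dominated by an integrable function via $\E[\widetilde B_n(s)^r]\le (1/d!)^{r-1}\E[\widetilde B_n(s)]\le (1/d!)^{r-1}(1\wedge Cs^{-l})$ with $l>r\a$. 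This handles all $r\ge1$ at once, with no reduction to integers and no cutoff parameter.

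Your route---split at $M$, upgrade $L^1$ to $L^r$ on the bounded head, and control the tail by expanding $(V_n^M)^r$ as an $r$-fold integral plus H\"older---is a perfectly valid alternative. It is a little longer and forces the (harmless) reduction to integer $r$, but it makes the role of the tail estimate more explicit: as you correctly emphasize, the ability to choose $l$ arbitrarily large is exactly what prevents the $r$-fold tail integral from blowing up. Either argument would serve; the paper's version is just more compact.
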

\begin{proof}
We may assume without loss of generality that all random variables are defined in a common probability space $(\Om,\cF,\P)$.
Denote the $L^r$-norm on $(\Om,\cF,\P)$ by $\|\cdot\|_{L^r}$.
From the second inequality of \eqref{eq:MorseIneq} and \eqref{eq:d-1_Betti_in_pr}, for $s\ge0$,  we have
\begin{equation}\label{eq:LMlimiting2_1}
0\le \frac{\b_{d-1}(K_n^{(d)}(s/n))}{n^d}1_{[0,n]}(s)\le \frac1{d!}
\quad\text{for }n\in\N
\end{equation}
and
\begin{equation}\label{eq:LMlimiting2_conv}
\frac{\b_{d-1}(K_n^{(d)}(s/n))}{n^d}1_{[0,n]}(s)\to \frac{1}{d!}h_d(s)\text{ in probability as $n\to\infty$}.
\end{equation}
Take $l\in\N$ such that $l>1\vee r\a$. From Example~\ref{ex:LM}~(1), there exists $C\ge0$ such that
\begin{equation}\label{eq:LMlimiting2_2}
\sup_{n\in\N}\E\left[\frac{\b_{d-1}(K_n^{(d)}(s/n))}{n^d}1_{[0,n]}(s)\right]\le 1\wedge C s^{-l}\quad\text{for $s\ge0$. }
\end{equation}
Applying Fatou's lemma to an appropriate subsequence, we obtain $\frac{1}{d!}h_d(s)\le 1\wedge C s^{-l}$.
From this estimate, $I_{d-1}^{(\a)}$ must be finite.
From Proposition~\ref{prop:lifetime_formula2} and Minkowski's inequality, 
\begin{align*}
&\left\|L_{d-1}^{(\a)}(\cK_n^{(d)})/n^{d-\a}-I_{d-1}^{(\a)}\right\|_{L^r}\\
&= \left\|\a\int_0^{\infty}\left(\frac{\b_{d-1}(K_n^{(d)}(s/n))}{n^{d}}1_{[0,n]}(s)-\frac{1}{d!}h_d(s)\right)s^{\a-1}ds\right\|_{L^r}\\
&\le \a\int_0^{\infty}U_n(s) s^{\a-1}\,ds, 
\end{align*}
where $U_n(s)=\left\|\frac{\b_{d-1}(K_n^{(d)}(s/n))}{n^{d}}1_{[0,n]}(s)-\frac{1}{d!}h_d(s)\right\|_{L^r}$.
Combining \eqref{eq:LMlimiting2_1} and \eqref{eq:LMlimiting2_conv}, we obtain $\lim_{n\to\infty}U_n(s)= 0$ for each $s\ge0$. 
Moreover, from \eqref{eq:LMlimiting2_1} and \eqref{eq:LMlimiting2_2},
\begin{align*}
\sup_{n\in\N}U_n(s)
&\le \sup_{n\ge s}\E\left[ \left(\frac{1}{d!}\right)^{r-1}\frac{\b_{d-1}(K_n^{(d)}(s/n))}{n^d}\right]^{1/r}+\frac{1}{d!}h_d(s)\\
&\le \left(\frac{1}{d!}\right)^{(r-1)/r}(1\wedge C^{1/r}s^{-l/r})+\frac{1}{d!}h_d(s).\end{align*}
Thus, $\sup_{n\in\N}U_n(s) s^{\a-1}$ is Lebesgue integrable over $[0,\infty)$. The dominated convergence theorem implies that $\int_0^{\infty}U_n(s)s^{\a-1}\,ds$ converges to $0$ as $n\to\infty$.
This completes the proof. 
\end{proof}

\subsection{The expression of the constant $I_{d-1}^{(\a)}$}\label{sec:I}
We now provide more concrete expressions for $I_{d-1}^{(\a)}$.
By an argument similar to that in \cite[Section~7.1]{HS}, $I_{d-1}^{(\a)}$ can be expressed as 
\[
I_{d-1}^{(\a)}=\frac1{d!\,(\a+1)}\left(\int_0^{t_d^*}\frac{(-\log s)^{\a+1}}{(1-s)^{d\a}}\,ds+(c_d^*)^\a\int_{t_d^*}^1 (-\log s)\,ds\right).
\]
For any $d\in\N$ and $\a>0$, 
\begin{align*}
\int_0^{t_d^*}\frac{(-\log s)^{\a+1}}{(1-s)^{d\a}}\,ds
&= \int_{-\log t_d^*}^\infty\frac{t^{\a+1}}{(1-e^{-t})^{d\a}}e^{-t}\,dt\\
&= \int_{-\log t_d^*}^\infty t^{\a+1}\sum_{k=0}^\infty\binom{d\a-1+k}k e^{-(k+1)t}\,dt\\
&= \sum_{k=0}^\infty\binom{d\a-1+k}k \int_{-\log t_d^*}^\infty t^{\a+1} e^{-(k+1)t}\,dt\\
&= \sum_{k=0}^\infty\binom{d\a-1+k}k \frac1{(k+1)^{\a+2}}\int_{-(k+1)\log t_d^*}^\infty u^{\a+1} e^{-u}\,du. 
\end{align*}
Here, we used the change of variable formulae with $t=-\log s$ in the first line and with $u=(k+1)t$ in the fourth line. 
For $d\in\N$ and $x>0$, define
\[
J_{d,k}(x)=\int_{-(k+1)\log t_d^*}^\infty u^{x-1} e^{-u}\,du. 
\]
Then, using integration by parts, 
\[
J_{d,k}(x+1)=x J_{d,k}(x)+(t_d^*)^{k+1}(k+1)^x(-\log t_d^*)^x. 
\]
As $J_{d,k}(1)=(t_d^*)^{k+1}$, we have
\[
J_{d,k}(x+1)=x!\,(t_d^*)^{k+1}\sum_{j=0}^x \frac{(k+1)^j(-\log t_d^*)^j}{j!}
\]
for $x\in\N\cup\{0\}$. Then, for any $d,\a\in\N$, 
\begin{align*}
\int_0^{t_d^*}\frac{(-\log s)^{\a+1}}{(1-s)^{d\a}}\,ds
&= \sum_{k=0}^\infty\binom{d\a-1+k}k \frac{J_{d,k}(\a+2)}{(k+1)^{\a+2}}\\
&= (\a+1)!\,\sum_{k=0}^\infty\binom{d\a-1+k}k \frac{(t_d^*)^{k+1}}{(k+1)^{\a+2}} \sum_{j=0}^{\a+1} \frac{(k+1)^j(-\log t_d^*)^j}{j!}\\
&= (\a+1)!\,\sum_{j=0}^{\a+1} \frac{(-\log t_d^*)^j}{j!}\sum_{k=0}^\infty\binom{d\a-1+k}k \frac{(t_d^*)^{k+1}}{(k+1)^{\a+2-j}}\\
&= \frac{(\a+1)!}{(d\a-1)!}\sum_{j=0}^{\a+1} \frac{(-\log t_d^*)^j}{j!}\sum_{i=0}^{d\a-1}{d\a-1\brack i}\sum_{k=0}^\infty \frac{(t_d^*)^{k+1}}{(k+1)^{\a+2-i-j}}\\
&= \frac{(\a+1)!}{(d\a-1)!}\sum_{i=0}^{d\a-1}{d\a-1\brack i} \sum_{j=0}^{\a+1} \frac{(-\log t_d^*)^j}{j!} \Li_{\a+2-i-j}(t_d^*). 
\end{align*}
Here, $n\brack k$ denotes Stirling numbers of the first kind, that is, the coefficients of the identity 
\[
x(x+1)\cdots(x+n-1)=\sum_{i=0}^n{n\brack i}x^i,
\]
where ${0\brack 0}=1$ by convention, and $\Li_s(x)$ is the polylogarithm 
\[
\Li_s(x)=\sum_{k=1}^\infty \frac{x^k}{k^s}\quad\text{($s\in\Z$, $0\le x\le1$). }
\]
Thus, for any $d,\a\in\N$, 
\begin{align}\label{eq:I1}
I_{d-1}^{(\a)}
&= \frac1{d!}\biggl\{\frac{\a!}{(d\a-1)!}\sum_{i=0}^{d\a-1}{d\a-1\brack i} \sum_{j=0}^{\a+1} \frac{(-\log t_d^*)^j}{j!} \Li_{\a+2-i-j}(t_d^*)\\
&\qad +\frac{(c_d^*)^\a\{-\log t_d^* -(1-t_d^*)\}}{d(\a+1)}\biggr\}, \nonumber
\end{align}
where we used the identity $(d+1)(1-t_d^*) + (1+d t_d^*)\log t_d^* = 0$ for the last term. 
In particular, noting that $t_1^*=1$, for any $\a\in\N$, 
\[
I_0^{(\a)}
= \a\sum_{i=0}^{\a-1}{\a-1\brack i}\Li_{\a+2-i}(1)
= \a\sum_{i=0}^{\a-1}{\a-1\brack i}\zeta(\a+2-i). 
\]
In particular, we have the specific values 
\begin{align*}
I_0=I_0^{(1)}&=\zeta(3), \\
I_0^{(2)}&=2\zeta(3), \\
I_0^{(3)}&=3(\zeta(3)+\zeta(4)), \\
I_0^{(4)}&=4(\zeta(3)+3\zeta(4)+2\zeta(5)), \\
I_0^{(5)}&=5(\zeta(3)+6\zeta(4)+11\zeta(5)+6\zeta(6)). 
\end{align*}
Also, by letting $\a=1$ and $d\ge2$ in \eqref{eq:I1}, we obtain
\begin{align}\label{eq:I2}
I_{d-1}
&=\frac1{d!}\biggl[\frac1{(d-1)!}\sum_{i=0}^{d-1}{d-1 \brack i}\left\{\Li_{3-i}(t_d^*)+(-\log t_d^*)\Li_{2-i}(t_d^*)+\frac{(-\log t_d^*)^2}{2}\Li_{1-i}(t_d^*)\right\}\\
&\qquad +\frac{(-\log t_d^*)\{-\log t_d^* -(1-t_d^*)\}}{2d(1-t_d^*)^d}\biggr].\nonumber
\end{align}
In particular,
\begin{align*}
I_1&=\frac12\biggl[\Li_2(t_2^*)+(\log t_2^*)\log(1-t_2^*)+\frac{t_2^*(\log t_2^*)^2}{2(1-t_2^*)}+\frac{(\log t_2^*)\{\log t_2^* +(1-t_2^*)\}}{4(1-t_2^*)^2}\biggr],\\
I_2&=\frac1{12}\biggl[\Li_2(t_3^*)+(\log t_3^*-1)\log(1-t_3^*)+\frac{t_3^*(\log t_3^*)(\log t_3^*-2)}{2(1-t_3^*)}+\frac{t_3^*(\log t_3^*)^2}{2(1-t_3^*)^2}\\
&\qquad+\frac{(\log t_3^*)\{\log t_3^* +(1-t_3^*)\}}{3(1-t_3^*)^3}\biggr].
\end{align*}
\section*{Acknowledgements}
This study was supported by JSPS KAKENHI Grant Number JP15H03625.
The authors thank Professors Yasuaki~Hiraoka and Tomoyuki~Shirai for their valuable comments.
%
%
%
%
%

\begin{thebibliography}{99}
%
\bibitem{BS} W. Ballmann and J. \'Swi\c{a}tkowski, On $L^2$-cohomology and property (T) for automorphism groups of polyhedral cell complexes, Geom. Funct. Anal. \textbf{7} (1997), 615--645. 
%
\bibitem{CF} A. Costa and M. Farber, Random simplicial complexes, in: \textit{Configuration spaces}, Springer INdAM Ser.~\textbf{14}, 129--153, Springer, 2016.
%
\bibitem{ELZ} H. Edelsbrunner, D. Letscher, and A. Zomorodian, Topological persistence and simplification, Discrete Comput. Geom. \textbf{28} (2002), 511--533. 
%
\bibitem{ER1} P. Erd\H{o}s and A. R\'{e}nyi, On random graphs, Publ. Math. Debrecen \textbf{6} (1959), 290--297. 
%
\bibitem{ER2} P. Erd\H{o}s and A. R\'{e}nyi, On the evolution of random graphs, Publ. Math. Inst. Hungarian Acad. Sci. \textbf{5A} (1960), 17--61. 
%
\bibitem{Fo} C. F. Fowler, Generalized random simplicial complexes. arXiv:1503.01831. 
%
\bibitem{F} A. M. Frieze, On the value of a random minimum spanning tree problem, Discrete Applied Math. \textbf{10} (1985), 47--56. 
%
\bibitem{G} H. Garland, $p$-adic curvature and the cohomology of discrete subgroups of $p$-adic groups, Ann. of Math. (2) \textbf{97} (1973), 375--423. 
%
\bibitem{Gi} E. N. Gilbert, Random graphs, Ann. Math. Statist. \textbf{30} (1959), 1141--1144. 
%
%
\bibitem{HS} Y. Hiraoka and T. Shirai, Minimum spanning acycle and lifetime of persistent homology in the Linial--Meshulam process, Random Structures Algorithms \textbf{51} (2017), 315--340. 
%
\bibitem{HKP} C. Hoffman, M. Kahle and E. Paquette, Spectral gaps of random graphs and applications. arXiv:1201.0425. 
%
\bibitem{K2} M. Kahle, Topology of random clique complexes, Discrete Math. \textbf{309} (2009), 1658--1671. 
%
\bibitem{K1} M. Kahle, Sharp vanishing thresholds for cohomology of random flag complexes, Ann. of Math. (2) \textbf{179} (2014), 1085--1107. 
%
\bibitem{LM1} N. Linial and R. Meshulam, Homological connectivity of random $2$-complexes, Combinatorica \textbf{26} (2006), 475--487. 
%
\bibitem{LP} N. Linial and Y. Peled, On the phase transition in random simplicial complexes, Ann. of Math. (2) \textbf{184} (2016), 745--773. 
%
\bibitem{LM2} R. Meshulam and N. Wallach, Homological connectivity of random $k$-dimensional complexes, Random Structures Algorithms \textbf{34} (2009), 408--417. 
%
\bibitem{ZC} A. Zomorodian and G. Carlsson, Computing persistent homology, Discrete Comput. Geom. \textbf{33} (2005), 249--274. 
%
\end{thebibliography}
%
%
%
%
%

\end{document}